\newtheorem{thm}{Theorem}[section]
\newtheorem{cor}[thm]{Corollary}
\newtheorem{prop}[thm]{Proposition}
\newtheorem{define}[thm]{Definition}
\newtheorem{rem}[thm]{Remark}
\newtheorem{lem}[thm]{Lemma}
\newcommand{\beq}{\begin{equation}}
	\newcommand{\eeq}{\end{equation}}
\newcommand{\ben}{\begin{eqnarray}}
	\newcommand{\een}{\end{eqnarray}}
\newcommand{\beno}{\begin{eqnarray*}}
	\newcommand{\eeno}{\end{eqnarray*}}
\numberwithin{equation}{section}
\subjclass[]{}
\keywords{}
\begin{document}
	\title{Regularity of the free boundaries for the two-phase axisymmetric inviscid fluid}

\author{LILI DU$^{1,2}$}
\author{FENG JI$^2$}
\thanks{* This work is supported by National Nature Science Foundation of China Grant 12125102, and Sichuan Youth Science and Technology Foundation 2021JDTD0024.}
\thanks{E-mail: dulili@szu.edu.cn. \quad E-mail: jifeng_math@126.com}

\maketitle 

\begin{center}
	$^1$College of Mathematical Sciences, Shenzhen University,
	
	Shenzhen 518061, P. R. China.
\end{center}

\begin{center}
	$^2$Department of Mathematics, Sichuan University,
	
	Chengdu 610064, P. R. China.
\end{center}

\begin{abstract}
	In the seminal paper (Alt, Caffarelli and Friedman, Trans. Amer. Math. Soc., 282, (1984).), the regularity of the free boundary of two-phase fluid in two dimensions via the so-called ACF energy functional was investigated. It was shown the $C^1$ regularity of the free boundaries and asserted that the two free boundaries coincide under some additional assumptions. Later on the standard technique of Harnack inequality could be applied to improve the regularity to $C^{1,\eta}$. A recent significant breakthrough in the regularity of two-phase fluid is due to De Philippis, Spolaor and Velichkov, who investigated the free boundary of the two-phase fluid with the two-phase functional (De Philippis, Spolaor and Velichkov, Invent. Math., 225, (2021).), and the $C^{1,\eta}$ regularity of the whole free boundaries was given in dimension two. Moreover, the free boundaries of the two-phase fluids do not coincide and the zero level set  may process positive Lebesgue measure. In this paper, we consider the free boundaries for the two-phase axisymmetric fluid and show the free boundary is $C^{1,\eta}$ smooth. The Lebesgue measure of the zero level set of may also be positive, and the main difference lies in the degenerate elliptic operator and the free boundary conditions. More precisely, we use partial boundary Harnack inequalities and establish a linearized problem, whose regularity of the solutions implies the flatness decay of the two-phase free boundaries. Then the iteration argument gives the smoothness of the free boundaries.
	
	\noindent{Keyword: } Free boundary; Two-phase fluid; Axisymmetric fluid; Regularity.
\end{abstract}

\tableofcontents    
\section{Introduction and main results}

\subsection{Introduction}

In this paper we investigate a two-phase Bernoulli-type free boundary problem in axisymmetric case, obtained by minimizing the energy functional
\begin{equation} \label{func}
J_{\text{a,tp}}(u,D):=\int_D \left[\frac{\vert\nabla u\vert^2}{x_2} +x_2\left(\lambda_+^2I_{\{u>0\}}+\lambda_-^2I_{\{u<0\}}\right)\right]dX
\end{equation}
in a relatively open subset $D\subset\mathbb{R}_+^2:=\mathbb{R}\cap\{x_2\geq0\}$. Here $dX=dx_1dx_2$, $x_1$ is the symmetric axis, $\lambda_{\pm}$ are positive numbers, and $I_{A}$ is the characteristic function of the set $A$.

By a minimizer, we understand a function $u\in W^{1,2}_{\text{w}}(D)$ such that
$$J_{\text{a,tp}}(u, D)\leq J_{\text{a,tp}}(v, D)$$
for any $v\in\mathcal{K}$, where
\begin{equation}
\mathcal{K}:=\left\{u \in W_{\text{w}}^{1,2}(D;\mathbb{R}) \ | \ u=-1 \ \text{on} \ \{x_2=0\}\right\}.
\end{equation}
Here, $W_{\text{w}}^{1,2}(D;\mathbb{R})$ is the weighted space
$$W_{\text{w}}^{1,2}(D;\mathbb{R}):=\left\{ v\in W^{1,2}(D;\mathbb{R}) \ | \int_D \frac{|\nabla v|^2}{x_2}dX+\int_D\frac{|v|^2}{x_2}dX<\infty \right\}.$$

It should be noted that the critical points of the functional $J_{\text{a,tp}}$ solves an elliptic equation except on their zero level sets, and the gradient $|\nabla u|$ jumps across the free boundaries. More precisely, the Euler-Lagrange equation to the energy functional $J_{\text{a,tp}}$ reads that
\begin{equation}\label{eq0}
\begin{cases}
\Delta u-\frac{1}{x_2}\partial_{x_2} u=0 \qquad\qquad\qquad\quad\ \; \text{in} \quad \{u\neq0\}\cap D, \\
|\nabla u^+|^2-|\nabla u^-|^2=x_2^2\left(\lambda_+^2-\lambda_-^2\right) \quad \text{on} \quad (\partial\{u>0\}\cap\partial\{u<0\})\cap D, \\
|\nabla u^\pm|=x_2\lambda_\pm \qquad\qquad\qquad\qquad \ \ \text{on} \quad \left(\partial( \{u>0\}\cup\{u<0\} ) \cap D \right) \backslash\left(\partial\{u>0\}\cap\partial\{u<0\}\cap D\right)
\end{cases}
\end{equation}
for $u^+=\max\{u, 0\}$ and $u^-=-\min\{u, 0\}$.

The problem (\ref{eq0}) should be viewed in the general framework of two-phase free boundary problems in incompressible inviscid axisymmetric fluid. We postpone the detailed argument in Section 1.4.

Now we introduce some notations. We will simply denote $J_{\text{a,tp}}(u)$ or $J_{\text{a,tp}}$ without causing confusion. The two-phase fluids seperated by the zero level set $\{u=0\}$ are noted fluid 1 in $\{u>0\}$ and fluid 2 in $\{u<0\}$, and we denote the positive set
\begin{equation*}
\Omega_u^+:=\{u>0\}
\end{equation*}
as fluid 1 and the negative set
\begin{equation*}
\Omega_u^-:=\{u<0\}
\end{equation*}
as fluid 2. Moreover, we denote the \emph{two-phase part of the free boundary}
\begin{equation} \label{tp}
\Gamma_{\text{tp}}:=\partial\Omega_u^+\cap\partial\Omega_u^-\cap D,
\end{equation}
and the \emph{one-phase part of the free boundary}
\begin{equation} \label{op}
\Gamma_{\text{op}}^+:=\left(\partial\Omega_u^+\cap D\right)\backslash \Gamma_{\text{tp}} \quad \text{and} \quad \Gamma_{\text{op}}^-:=\left(\partial\Omega_u^-\cap D\right)\backslash \Gamma_{\text{tp}},
\end{equation}
See Figure 1.

\begin{figure}[!h] 
	\includegraphics[width=100mm]{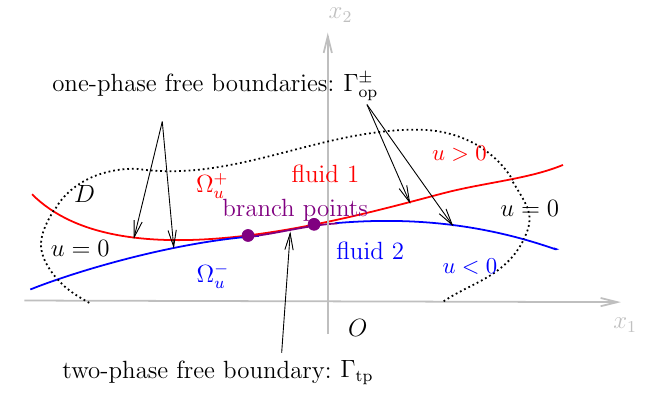}
	\caption{Two-phase axisymmetric fluid}
\end{figure}

Then (\ref{eq0}) can be rewritten as
\begin{equation} \label{eq}
\Delta u-\frac{1}{x_2}\partial_{x_2} u=0 \quad \text{in} \quad (\Omega_{u}^+\cup\Omega_{u}^-)\cap D
\end{equation}
with the Bernoulli type free boundary conditions
\begin{equation}\label{bc}
\begin{cases}
|\nabla u^+|^2-|\nabla u^-|^2=(x_2)^2(\lambda_+^2-\lambda_-^2) \ \ \quad \text{on} \quad \Gamma_{\text{tp}}, \\
|\nabla u^{\pm}| = x_2\lambda_{\pm} \qquad\qquad\qquad\qquad\qquad \text{on} \quad \Gamma_{\text{op}}^{\pm}.
\end{cases}
\end{equation}
Note that there is an additional free boundary condition
\begin{equation}\label{bc2}
|\nabla u^{\pm}|\geq x_2\lambda_{\pm} \quad \text{on} \quad \Gamma_{\text{tp}}
\end{equation}
which naturally arises from the minimizing problem (\ref{func}). We will verify the fact (\ref{bc}) and (\ref{bc2}) in Appendix A.

Furthermore, the two-phase free boundary points can be further divided into branch points and non-branch points. We say $x_0\in\Gamma_{\text{tp}}$ is a \emph{branch point} if $|B_r(x_0)\cap\{u=0\}|>0$ for any $r>0$. Otherwise we say $x_0\in\Gamma_{\text{tp}}$ is a \emph{non-branch point} if $|B_{r_0}(x_0)\cap\{u=0\}|=0$ for some $r_0>0$.

\subsection{Analysis of the two-phase functionals}

The regularity of the minimizers of the two-phase functional was first addressed by Alt, Caffarelli and Friedman in the pioneering paper \cite{ACF84}, which considered the following ACF functional
\begin{equation*}
J_{\text{acf}}(u)=\int_D \left(\vert\nabla u\vert^2 +\lambda_1^2I_{\{u>0\}}+\lambda_2^2I_{\{u<0\}}+\lambda_0^2I_{\{u=0\}}\right)dX.
\end{equation*}
They had a good observation that if $\lambda_0\geq\min\{\lambda_1, \lambda_2\}$ in $J_{\text{acf}}$, then the measure of the zero level set $\{u=0\}$ has to vanish. To see this fact, we assume that $\min\{\lambda_1, \lambda_2\}=\lambda_2\leq\lambda_0$, and $u$ is a minimizer of $J_{\text{acf}}$ locally in a ball $B$, with $|\{u=0\}\cap B|>0$. See Figure 2.

\begin{figure}[!h] 
	\includegraphics[width=85mm]{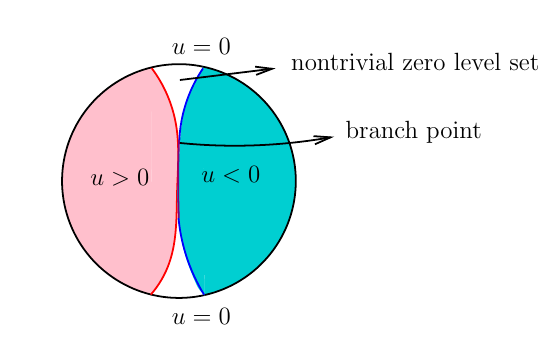}
	\caption{\{u=0\} with positive measure}
\end{figure}

We give a rough illustration about this observation, and the readers can find more rigorous details in \cite{ACF84}, Chapter 6. Under the assumption $\lambda_0\geq\min\{\lambda_1,\lambda_2\}$ if we set a harmonic function $v$ in $\{u\leq0\}\cap B$ such that $v$ equals $u$ on the boundary, then the Dirichlet energy of $v$ does not exceed that of $u$. Hence for the function
$$w=u^+-v,$$
as in Figure 3, we have
$$\int_B |\nabla w|^2 dX < \int_B |\nabla u|^2 dX$$
since $v$ is harmonic in $\{u<0\}\cap B_1$, but $u^-$ is not harmonic in $\{u<0\}\cap B_1$. Furthermore,
\begin{equation*}
\begin{aligned}
\int_B \big(\lambda_1^2I_{\{w>0\}}+\lambda_2^2I_{\{w<0\}}+\lambda_0^2I_{\{w=0\}}\bigr)dX&=\int_B \bigl(\lambda_1^2I_{\{u>0\}}+\lambda_2^2I_{\{w<0\}}\big)dX \\
&=\int_B \left(\lambda_1^2I_{\{u>0\}}+\lambda_2^2I_{\{u\leq0\}}\right)dX \\
&\leq\int_B \left(\lambda_1^2I_{\{u>0\}}+\lambda_2^2I_{\{u<0\}}+\lambda_0^2I_{\{u=0\}}\right)dX.
\end{aligned}
\end{equation*}
This implies that
$$J_{\text{acf}}(w) < J_{\text{acf}}(u),$$
a contradiction to the fact that $u$ is a minimizer of $J_{\text{acf}}$.

\begin{figure}[!h] 
	\includegraphics[width=50mm]{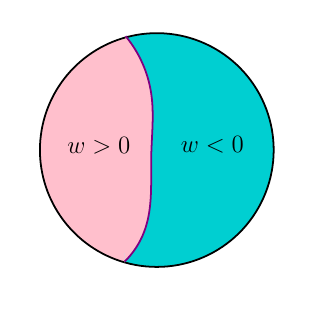}
	\caption{The function $w$}
\end{figure}

Therefore, the three mathematicians deduced that there is no cavity $\{u=0\}$ in the fluid, and the free boundaries of the minimizer are continuously differentiable for $\lambda_0=\min\{\lambda_1, \lambda_2\}$. Namely, the two free boundaries $\partial\{u>0\}$ and $\partial\{u<0\}$ coincide, and the zero level set $\{u=0\}$ has zero Lebesgue measure. That is, there is no branch point.

How about the case $\lambda_0<\min\{\lambda_1,\lambda_2\}$? And how to investigate the two-phase fluid with branch point? As a recent breakthrough by De Philippis, Spolaor and Velichkov in \cite{PSV21}, the following two-phase functional
$$J_{\text{tp}}(u)=\int_D \left(\vert\nabla u\vert^2 +\lambda_+^2I_{\{u>0\}}+\lambda_-^2I_{\{u<0\}}\right)dX$$	
was investigated. There is no additional term $\lambda_0|\{u_0=0\}|$ in this functional. It is noteworthy that under the assumption $\lambda_0<\min\{\lambda_1,\lambda_2\}$ for $J_{\text{acf}}$, there is an equivalence between $J_{\text{tp}}$ and $J_{\text{acf}}$. In fact, we can assume that $\lambda_+^2=\lambda_1^2-\lambda_0^2$ and  $\lambda_-^2=\lambda_2^2-\lambda_0^2$. Then
\begin{equation*}
\begin{aligned}
J_{\text{tp}}(u)
&=\int_D \left(\vert\nabla u\vert^2 +\lambda_+^2I_{\{u>0\}}+\lambda_-^2I_{\{u<0\}}\right)dX \\
&=\int_D \left(\vert\nabla u\vert^2 +(\lambda_1^2-\lambda_0^2)I_{\{u>0\}}+(\lambda_2^2-\lambda_0^2)I_{\{u<0\}}\right)dX \\
&=\int_D \left(\vert\nabla u\vert^2 +\lambda_1^2I_{\{u>0\}}+\lambda_2^2I_{\{u<0\}}-\lambda_0^2I_{\{u>0\}\cup\{u<0\}}\right)dX \\
&=\int_D \left(\vert\nabla u\vert^2 +\lambda_1^2I_{\{u>0\}}+\lambda_2^2I_{\{u<0\}}+\lambda_0^2I_{\{u=0\}}-\lambda_0^2I_D\right)dX \\
&=J_{\text{acf}}(u)-\lambda_0^2|D|,
\end{aligned}
\end{equation*}
which gives the equivalence between the two functionals $J_{\text{acf}}$ and $J_{\text{tp}}$. Hence with positive parameters $\lambda_{\pm}$, the $C^{1,\eta}$ regularity of the free boundary for local minimizers was obtained in two dimensions, and the two-phase fluid with nontrivial nodal set was firstly investigated in the elegant work \cite{PSV21}. However, some essential difficulties arose, such as the regularity of the free boundaries near the branch points. They introduced some novel ideas on the free boundaries near the branch points and developed the results of Silva in \cite{S11} for two-phase flow and gave a full description of the free boundary of the two-phase minimizer. The key point of their argument was to establish the compactness of a suitable sequence of functions and to get the limiting "linearized" problem. They observed that the "linearization" at the branch point is the two-membrane problem and reached the compactness of the linearizing sequence. Furthermore, the two-phase part $\Gamma_{\text{tp}}$ of the free boundaries is of $C^{1,\eta}$ regularity in any dimension, while either of the one-phase part $\Gamma_{\text{op}}^\pm$ follows the known result in \cite{V23}, and in contrast with the two-phase part, there is a critical dimension $d^*\in\{5,6,7\}$ for singular sets. Moreover, in 2023, David, Engelstein, Garcia and Toro constructed a family of minimizers for $J_{\text{tp}}$ whose free boundaries contain branch points in the strict interior of the domain in \cite{DEGT23}.

In this paper we follow the main guidelines in \cite{PSV21} to study the axisymmetric two-phase incompressible inviscid fluid in dimension three. The zero level set of the minimizer $u$ of $J_{\text{a,tp}}$ with $\lambda_{\pm}>0$ will have positive measure, which implies that $u$ has both one-phase free boundary points and two-phase free boundary points. The presence of a branch point requires us to face the situation as in \cite{PSV21}, however there are some additional difficulties here, such as the possible singularity near the axis of symmetry and the degeneracy of the operator near the axis of symmetry. We have to restructure the non-degeneracy and the Lipschitz regularity for the minimizer $u$, and furthermore study the regularity of the whole free boundaries.

In the following sections we assume that $\lambda_+\geq\lambda_->0$ without loss of generality.

\subsection{Mathematical background of two-phase fluid}

The free boundary mathematical theory of two-phase flow problems was first introduced by Alt, Caffarelli and Friedman in 1980s. They employed the variational method to prove the existence of the minimizer of the two-phase flow and established the $C^1$ regularity of the free boundaries in \cite{ACF84}. Based on the well-posedness and regularity theory they considered an incompressible inviscid flow of two jets in a pipe without branch points, investigating its existence and uniqueness. From then on, there has been a surge in studying such incompressible inviscid flows and their free boundaries. Caffarelli developed a standard and powerful approach in \cite{C87}\cite{C89}\cite{C88} in 1987-1989 to get the smoothness of the free boundary by viscosity method, which was widely used in research on the regularity of the free boundary problems for one-phase and two-phase problems. Recently, Silva has developed a new approach in \cite{S11} for this series of problems through the partial boundary Harnack inequality to improve the flatness. This new approach in \cite{SFS14} was applied to study the two-phase free boundary problems with distributed source, and in \cite{SFS15} for fully nonlinear non-homogeneous problems. In \cite{SFS17}, Silva, Ferrari and Salsa investigated the existence and the smoothness of viscosity solutions and their free boundaries. They also claimed some open problems for the existence of Lipschitz viscosity solutions in fully nonlinear case, and the analysis of singularities of the free boundary in non-homogeneous case. Very recently, the existence and structure of branch points in two-phase free boundary problem based on the ACF functional is investigated and an example of a two-phase problem with branch points is given in \cite{DEGT23}. The two-phase model can also describe the appearance of a phase transition from ice to water, see \cite{S18}, Section 5.4.1.

On the other hand, there have been extensive study and applications about the axisymmetric flow, which were developed by Serrin in \cite{S52}, Garabedian in \cite{G56}, Alt, Caffarelli and Friedman in \cite{ACF83}. Recently in 2014, V$\check{a}$rv$\check{a}$ruc$\check{a}$ and Weiss classified and analysed the degenerate points for a steady axisymmetric flow with gravity of dimension three in \cite{VW14}. Another important application of their model was in \cite{GVW16} to study the axisymmetric electrohydrodynamic equations.

There is a widespread application in hydrology and hydrodynamics for two-phase fluid. A typical example was the Prandtl-Bachelor model in fluid-dynamics in \cite{B56} and \cite{EM95}, where the stream functions may satisfy different equations in the two phases. Moreover, a great deal of mathematical efforts have been devoted to the study of the two-phase CFD model. For instance, the investigation of solid-liquid slurry flow was based on the Eulerian two-fluid model to simulate the flow in \cite{MM13}, the analysis on sediment water mixtures was based on a two-phase model in\cite{SCK21}, and so on. Additionally, this type of two-phase problem also arose in eigenvalue problem in magneto-hydrodynamics in \cite{FL94} and in flame propagation models in \cite{LW06} with forcing term.

Our prime goal is to consider the two-phase axisymmetric inviscid fluid of dimension three without external force. We will develop the method in the celebrated work \cite{PSV21} and get the $C^{1,\eta}$ regularity for the whole free boundaries.

\subsection{Mathematical formulation for two-phase axisymmetric inviscid fluids}

We are concerned with the axisymmetric ideal two-phase fluids, incompressible fluid 1 and incompressible fluid 2, in a three-dimensional space without swirl, which is originated from the incompressible Euler equations. 
\par
Suppose $U=(u_1,u_2,u_3)=(u_1(x,y,z),u_2(x,y,z),u_3(x,y,z))$ to be the velocity field of the fluid, with $U^+=(u_1^+,u_2^+,u_3^+)$ in fluid 1 and $U^-=(u_1^-,u_2^-,u_3^-)$ in fluid 2, and the $x$-axis to be the axis of symmetry. Then $U^\pm$ is a solution to the steady incompressible Euler system
\begin{equation*}
\begin{cases}
\nabla\cdot\rho_{\pm} U^\pm=0 \qquad\qquad\qquad \text{in} \quad \Omega_\pm, \\
\rho_{\pm}(U^\pm\cdot\nabla)U^\pm+\nabla p_{\pm}=0 \quad\, \text{in} \quad \Omega_\pm
\end{cases}
\end{equation*}
respectively in fluid 1 and fluid 2 with $\Omega_\pm$ denoting the two fluid fields, $\rho_{\pm}$ denoting the constant density and $p_{\pm}$ denoting the pressure of the two fluids. In addition, the flow is assumed to be irrotational, namely
\begin{equation*}
\nabla \times U^\pm =0.
\end{equation*}

\begin{figure}[!h]
	\includegraphics[width=95mm]{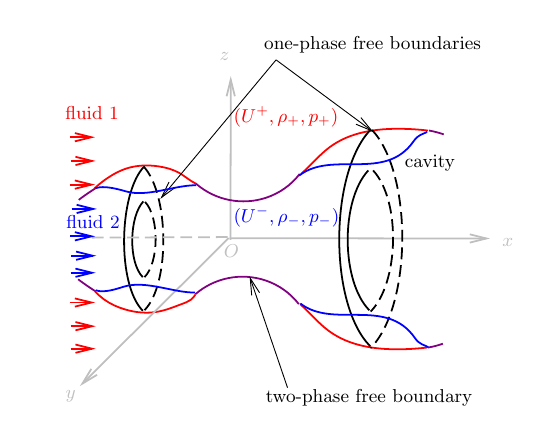}
	\caption{The axisymmetric two-phase free boundary problem}
\end{figure}

The Euler's equations in cylindrical polar coordinates can be derived as in Section 3.7.3 in \cite{C09}. Under the assumption that the flow is axisymmetric without swirl, we rewrite $x_1=x$, $x_2=\sqrt{y^2+z^2}$ and let $v_\pm(x_1,x_2), w_\pm(x_1,x_2)$ denote the radial velocity and the axial velocity of the two-phase fluids, respectively, i.e. $U^\pm=(v_\pm(x_1,x_2),w_\pm(x_1,x_2))$. Then
\begin{equation*}
u_1(x,y,z)=v(x_1,x_2), \ \
u_2(x,y,z)=w(x_1,x_2)\frac{y}{x_2}, \ \
u_3(x,y,z)=w(x_1,x_2)\frac{z}{x_2}.
\end{equation*}
Hence we obtain the following axisymmetric Euler system
\begin{equation} \label{eq2}
\begin{cases}
\partial_{x_1}(\rho_{\pm}x_2v) + \partial_{x_2}(\rho_{\pm}x_2w) =0, \\
\partial_{x_1}(\rho_{\pm}x_2v^2)+\partial_{x_2}(\rho_{\pm}x_2vw)+x_2\partial_{x_1}p_{\pm}=0, \\
\partial_{x_1}(\rho_{\pm}x_2vw)+\partial_{x_2}(\rho_{\pm}x_2w^2)+x_2\partial_{x_2}p_{\pm}=0
\end{cases}
\end{equation}
with irrotational condition
\begin{equation*}
\partial_{x_2}v-\partial_{x_1}w=0.
\end{equation*}

Consider the situations of the fluid 1 and fluid 2, respectively. Combining with the first equation in (\ref{eq2}), there is a stream function
\begin{equation*}
u(x_1,x_2)=
\begin{cases}
u^+ \quad \text{in fluid field 1}, \\
u^- \quad \text{in fluid field 2}, \\
0 \quad \ \ \text{otherwise},
\end{cases}
\end{equation*}
such that $v_{\pm}=\frac{1}{\sqrt{\rho_{\pm}}x_2}\partial_{x_2}u^\pm$ and  $w_{\pm}=-\frac{1}{\sqrt{\rho_{\pm}}x_2}\partial_{x_1}u^\pm$ respectively in fluid 1 and fluid 2. Consequently, the conservation of momentum and the irrotational condition give that
\begin{equation}
\Delta u-\frac{1}{x_2}\partial_{x_2}u=0
\end{equation}
respectively in fluid 1 and fluid 2.

As we know, on every streamline the stream function remains a constant. Hence, without loss of generality, we can define $\{u>0\}$ and $\{u<0\}$ to be the two fluid fields, respectively. Moreover, the $x_1$-axis is a level set of the stream function, and then we can normalize the value of the stream function on the axis to be
\begin{equation*}
u=-1 \ \text{on} \ \{x_2=0\}.
\end{equation*}
The free boundaries are defined as $\partial\{u>0\}\cup\partial\{u<0\}$. Here, we can define the two-phase free boundary $\Gamma_{\text{tp}}$ as in (\ref{tp}) and the one-phase free boundaries $\Gamma_{\text{op}}^\pm$ as in (\ref{op}). Notice that there might be a cavity $\{u=0\}$ with positive measure. See Figure 5 below.

\begin{figure}[!h]
	\includegraphics[width=90mm]{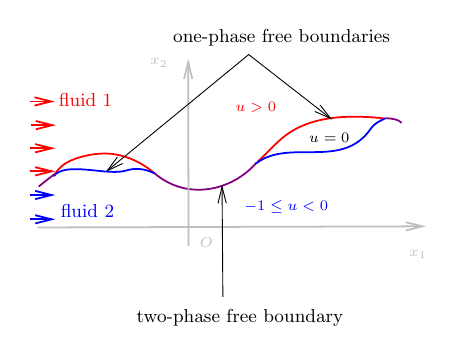}
	\caption{The axisymmetric two-phase free boundary problem}
\end{figure}

\par

On account of the Bernoulli's law we obtain that for the velocity field $U^\pm$, there are so-called Bernoulli's constants $\mathscr{B}_{\pm}$ such that
\begin{equation}
\frac{p_{\pm}}{\rho_{\pm}}+\frac12|U^\pm|^2=\mathscr{B}_{\pm}
\end{equation}
along streamlines of the incompressible and inviscid flow. Then we have
\begin{equation} \label{ber}
\frac{\rho_+}{2}(v_+^2+w_+^2)+p_+=\rho_+\mathscr{B}_+ \quad \text{and} \quad \frac{\rho_-}{2}(v_-^2+w_-^2)+p_-=\rho_-\mathscr{B}_-
\end{equation}
along streamlines for fluid 1 and fluid 2 respectively. Moreover, on the one-phase free boundaries $\Gamma_{\text{op}}^\pm$, the pressure is assumed to be the given constant pressure $p_0$, and on the two-phase free boundary $\Gamma_{\text{tp}}$, the pressure is assumed to be continued across it. Hence, we have
\begin{equation*}
p_\pm=p_0 \quad \text{on} \quad \Gamma_{\text{op}}^\pm \quad \text{and} \quad p_+=p_- \quad \text{on} \quad \Gamma_{\text{tp}}.
\end{equation*}
This together with (\ref{ber}) implies that
$$\frac{\rho_\pm}{2}(v_\pm^2+w_\pm^2)=\rho_\pm\mathscr{B}_\pm-p_0 \quad \text{on} \quad \Gamma_{\text{op}}^\pm$$
and
$$\frac{\rho_+}{2}(v_+^2+w_+^2) - \frac{\rho_-}{2}(v_-^2+w_-^2)=\rho_+\mathscr{B}_+-\rho_-\mathscr{B}_- \quad \text{on} \quad \Gamma_{\text{tp}}.$$

Define the positive parameters $\lambda_+$ and $\lambda_-$ as
$$\lambda_+^2=2\left(\rho_+\mathscr{B}_+-p_0\right) \quad \text{and} \quad \lambda_-^2=2\left(\rho_-\mathscr{B}_--p_0\right),$$
with $p_0$ the pressure of the cavity, and we have
$$\lambda_+^2-\lambda_-^2=2\left(\rho_+\mathscr{B}_+-\rho_-\mathscr{B}_-\right).$$
In fact, $\frac12\lambda_{\pm}^2$ represents the kinetic energy of the fluids per unit volumn on their one-phase free boundaries, and $\frac12(\lambda_+^2-\lambda_-^2)$ means the jump of the kinetic energy per unit volumn across the two-phase free boundary.

Recalling that $v_{\pm}=\frac{1}{\sqrt{\rho_{\pm}}x_2}\partial_{x_2}u$ and  $w_{\pm}=-\frac{1}{\sqrt{\rho_{\pm}}x_2}\partial_{x_1}u$, we have
$$\frac{1}{x_2^2}(|\nabla u^+|^2-|\nabla u^-|^2)=\lambda_+^2-\lambda_-^2$$
on the two-phase free boundary $\Gamma_{\text{tp}}$, and
$$\frac{1}{x_2}|\nabla u^{\pm}| = \lambda_{\pm}$$
on the one-phase free boundaries $\Gamma_{\text{op}}^\pm$. Thus we obtain the governing equation (\ref{eq}) and kinetic boundary conditions (\ref{bc}) on the free boundaries, which is a two-phase free boundary problem with Bernoulli's type boundary conditions.

\subsection{Main results}

Before giving the main result we first introduce the definition of local minimizers to the two-phase fluid problem.

The main purpose of this paper is to locally study the regularity of the free boundary. Our model is given by the functional
\begin{equation}
J_{\text{a,tp}}(u, B_r):=\int_{B_r} \left[\frac{\vert\nabla u\vert^2}{x_2} +x_2\left(\lambda_+^2I_{\{u>0\}}+\lambda_-^2I_{\{u<0\}}\right)\right]dX
\end{equation}
for
\begin{equation}
u \in \mathcal{K}':=\left\{u \in W_{\text{w}}^{1,2}(B_r; \mathbb{R}) \ | \ u=-1 \ \text{on} \ \{x_2=0\}\cap B_r\right\},
\end{equation}
where $B_r\Subset D$.

\begin{define}(Local minimizers)
	We say $u$ is a local minimizer of the functional $J_{\rm a,tp}$ in $D$, provided that
	$$J_{\rm a,tp}(u, B_r)\leq J_{\rm a,tp}(v, B_r)$$
	for any $v\in\mathcal{K}'$, $u-v\in W_0^{1,2}(B_r)$ and $B_r\Subset D$.
\end{define}

The main result in this paper discuss the regularity of the whole free boundary, including the one-phase parts $\Gamma_{\text{op}}^\pm$ and the two-phase part $\Gamma_{\text{tp}}$. The first result is about the uniform distance of the free boundary from the $x_1$-axis.

\begin{thm} \label{thm0}
	There exists a uniform constant $b\in(0,1)$ depending only on $D$ such that there is no free boundary point in $\{x_2\leq b\}\cap D$.
\end{thm}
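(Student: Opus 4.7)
My plan is to establish the stronger statement that $u < 0$ throughout a uniform neighbourhood of the axis $\{x_2 = 0\} \cap D$, which immediately rules out any free boundary point in that neighbourhood. The key observation is that the degenerate elliptic operator $\mathcal{L} := \Delta - \tfrac{1}{x_2}\partial_{x_2}$ admits the explicit radial solution $\phi(x_1,x_2) := -1 + C x_2^2$, which matches the Dirichlet value $u = -1$ on the axis, grows quadratically in $x_2$, and is strictly negative on $\{x_2 < 1/\sqrt{C}\}$. Any pointwise bound of the form $u \leq \phi$ on a thin horizontal slab therefore yields the theorem.

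To deploy the barrier I would first record an a priori bound $\|u\|_{L^\infty(D)} \leq M$ with $M = M(D,\lambda_\pm)$, produced by truncating $u$ against $\pm N$ for $N$ large and using minimality. Next, fix a height $h_0 > 0$, depending only on the geometry of $D$, so that the slab $\{0 < x_2 < h_0\} \cap D$ lies in $D$, and set $C := (M+1)/h_0^2$. This choice makes $\phi \geq u$ on both horizontal pieces of $\partial(\{x_2 < h_0\} \cap D)$: $\phi(x_1,0) = -1 = u(x_1,0)$ on the axis, and $\phi(x_1,h_0) = M \geq u(x_1,h_0)$ on the top.

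The core step is the comparison $u \leq \phi$ inside the slab. Since $u$ solves $\mathcal{L}u = 0$ only away from $\{u = 0\}$, a classical maximum principle is not directly available across a thick zero set. Instead I plan to argue variationally using the admissible competitor $w := \min(u,\phi)$. Writing $\mathcal{L}\phi = 0$ in divergence form as $\nabla \cdot (\nabla \phi / x_2) = 0$ and integrating by parts (the boundary term vanishes because $u - \phi$ is zero on $\partial\{u > \phi\}$) shows that the weighted Dirichlet energy decreases by exactly $\int_{\{u > \phi\}} |\nabla(u - \phi)|^2 / x_2 \, dX$, while the bulk energy changes by
\[
\int_{\{u = 0\} \cap \{u > \phi\}} x_2 \lambda_-^2 \, dX \;-\; \int_{\{u > 0\} \cap \{u > \phi\}} x_2 (\lambda_+^2 - \lambda_-^2) \, dX.
\]
The second integral is nonpositive since $\lambda_+ \geq \lambda_-$, so the only potentially unfavourable contribution comes from the branch-type set $\{u = 0\} \cap \{u > \phi\}$. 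Because $u - \phi$ vanishes on $\partial\{u > \phi\}$ and equals $1 - C x_2^2$ on $\{u = 0\}$, a weighted Poincar\'e-type inequality in the thin slab absorbs this bulk penalty into the gradient gain provided $h_0$ is small, and minimality then forces $|\{u > \phi\}| = 0$.

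Setting $b := \min\{1, h_0/(2\sqrt{M+1})\}$ gives $u \leq \phi \leq -1 + C b^2 < 0$ throughout $\{x_2 \leq b\} \cap D$, so this slab lies entirely inside the open set $\Omega_u^-$ and contains no free boundary point. The principal obstacle is the comparison step across the branch-type zero set $\{u = 0\}$: unlike in the classical one-phase setting one cannot invoke a bare PDE maximum principle, and the variational approach must quantify the competition between the gradient savings from truncation and the bulk cost of reclassifying $\{u = 0\}$ as $\{w < 0\}$. A much cleaner alternative, if available, is to invoke the Lipschitz regularity of $u$ up to the axis developed later in the paper: from $u(x_1, 0) = -1$ and $|\nabla u| \leq L$ one immediately gets $u \leq -1 + L x_2$, whence $b = 1/(2L)$ works.
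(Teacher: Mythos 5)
Your primary route (the explicit barrier $\phi=-1+Cx_2^2$, which indeed satisfies $\mathcal{L}\phi=0$, combined with the variational competitor $\min(u,\phi)$) is genuinely different from the paper's argument, but the decisive step is not closed. After the integration by parts the minimality inequality reads
\begin{equation*}
\int_{\{u>\phi\}}\frac{|\nabla(u-\phi)^+|^2}{x_2}\,dX\;\leq\;\lambda_-^2\int_{\{u=0\}\cap\{u>\phi\}}x_2\,dX\;,
\end{equation*}
and your plan is to absorb the right-hand side by a Poincar\'e inequality using that $u-\phi=1-Cx_2^2$ on $\{u=0\}$. This only works where $1-Cx_2^2$ is bounded away from zero; on the sub-slab where $x_2$ is comparable to $h_0/\sqrt{M+1}$ (so $\phi$ is near $0$), the quantity $u-\phi$ is small and the measure of $\{u=0\}\cap\{u>\phi\}$ there cannot be controlled by $\int(u-\phi)_+^2$. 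The best you get is $\int|\nabla(u-\phi)^+|^2/x_2=O(h_0^2)$, which does not force $|\{u>\phi\}|=0$, let alone the pointwise bound $u\leq\phi$ needed to exclude free boundary points. Two further issues: the competitor $\min(u,\phi)$ may differ from $u$ on a set reaching the lateral boundary of $D$ (so it is not admissible for a \emph{local} minimizer without a localization that reintroduces boundary terms in the integration by parts), and your fallback of ``invoking the Lipschitz regularity developed later in the paper'' is circular, since the Lipschitz estimate of Appendix C explicitly uses Proposition 2.1 (this theorem) in its first step.

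For comparison, the paper's proof is local and avoids any global competitor: it picks a putative lowest point $M\in\partial\Omega_u^-$ at height $m_2<b$ on a vertical line, shows the segment from $M$ down to the axis lies in $\{u<0\}$, rescales $u_0(x)=t^{-2}u(x_0+tx)$ with $t=x_2^0/2$ to turn $\mathcal{L}$ into a uniformly elliptic divergence-form operator, and deduces from interior gradient estimates that $|\nabla u|\leq Cx_2$ along the segment; integrating from $u=-1$ on the axis to $u=0$ at $M$ gives $1\leq Cb$, a contradiction for $b$ small. This is exactly the non-circular version of your closing remark: the gradient bound is proved directly in the negative phase below the free boundary rather than quoted from the global Lipschitz theory. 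If you want to salvage the barrier approach, you would need an additional ingredient (for instance density/non-degeneracy estimates for $\{u=0\}$ near the axis, or a barrier argument confined to $\{u<0\}$ where the comparison principle applies classically) to handle the zero set.
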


The key point of this observation is that the gradient of the minimizer $u$ should be uniformly bounded near the axis, hence there must be a positive distance between the level sets $\{u=-1\}$ and $\{u=0\}$.

The second result says that the free boundary of the local minimizers is $C^{1,\eta}$ smooth. By Theorem \ref{thm0} we know that the gradient of the minimizers do not vanish on the free boundaries, so we can expect to get a good regularity for free boundary points.

\begin{thm}  \label{thm1}
	(Main result)
	Let $u \in \mathcal{K}'$ be a local minimizer of $J_{\rm a,tp}$ in $D$. Then for every free boundary point $x_0$, there is $r_0>0$ such that $\partial\Omega_u^+ \cap B_{r_0}(x_0)$ and $\partial\Omega_u^- \cap B_{r_0}(x_0)$ are $C^{1,\eta}$ graphs for some $\eta>0$. That is, $\partial\Omega_u^{\pm} \cap D$ are locally $C^{1,\eta}$ graphs.
\end{thm}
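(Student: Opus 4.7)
The plan is to localize around each free boundary point $x_0$, use Theorem \ref{thm0} to guarantee that $x_2 \geq b > 0$ on a whole neighborhood of $x_0$ so that the degenerate operator $L := \Delta - \frac{1}{x_2}\partial_{x_2}$ becomes a uniformly elliptic linear operator with smooth coefficients there, and then reduce the statement to an improvement-of-flatness scheme applied separately to the three types of free boundary point: the one-phase points in $\Gamma_{\text{op}}^{\pm}$, the two-phase non-branch points in $\Gamma_{\text{tp}}$, and the two-phase branch points in $\Gamma_{\text{tp}}$. Before any flatness argument can be run, one needs the standard Alt--Caffarelli toolbox in the weighted setting: local Lipschitz regularity of $u$, non-degeneracy of $u^{\pm}$ on their respective phases, finite perimeter of $\Omega_u^{\pm}$, and existence of blow-up limits. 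I would rederive these by exploiting the fact that $x_2$ is comparable to a positive constant on $B_{r_0}(x_0)$, so that the arguments reduce to perturbations of those in \cite{ACF84} and \cite{PSV21}.

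Near a point $x_0 \in \Gamma_{\text{op}}^{+}$, every blow-up is a one-phase half-plane solution $\alpha(x\cdot\nu)^+$ with $\alpha = x_2(x_0)\lambda_+$. The partial boundary Harnack / linearization method of De Silva \cite{S11} then produces a linearized problem which is an oblique-derivative Neumann problem for $L$; its interior Hölder regularity yields the geometric decay of the flatness, and iteration gives the $C^{1,\eta}$ graph. The negative one-phase case is treated symmetrically. At a non-branch two-phase point $x_0 \in \Gamma_{\text{tp}}$, every blow-up is a two-plane solution $\alpha(x\cdot\nu)^+ - \beta(x\cdot\nu)^-$ with $\alpha^2 - \beta^2 = x_2(x_0)^2(\lambda_+^2 - \lambda_-^2)$, and I would combine the two-phase partial boundary Harnack inequality of \cite{SFS14} with the linearization step of \cite{PSV21} to simultaneously linearize in $u^+$ and $u^-$; the limit is a coupled transmission-type system for $L$ whose regularity again delivers the flatness decay.

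The genuinely new case, and the main obstacle, is the branch points. Here I would follow the strategy of \cite{PSV21}: the correct linearization is the two-membrane problem for $L$, that is a pair $(\varphi,\psi)$ with $L\varphi \leq 0$, $L\psi \geq 0$, $\varphi \geq \psi$, together with the complementarity condition. The difficult technical step is to reproduce in the weighted setting the compactness of the rescaled sequences $(u_n^{\pm} - \ell_n^{\pm})/\varepsilon_n$ around flat pairs of graphs, and to identify the limit as a solution of the two-membrane problem for $L$ with coefficients frozen at $x_0$. I expect the main difficulty to be uniform control of the traces of $u_n^{\pm}$ on the cavity $\{u_n = 0\}$ under the available non-degeneracy, and in particular the interplay between the branch geometry and the drift $-\frac{1}{x_2}\partial_{x_2}$; Theorem \ref{thm0} is what makes this step tractable, since the drift coefficient is bounded and smooth on the whole neighborhood, and the two-membrane regularity theory of \cite{PSV21} then applies to the limit.

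Once the three improvement-of-flatness statements are in place, the conclusion follows by a standard dyadic iteration: each improvement-of-flatness lemma is of the form ``if the pair $(\Omega_u^+, \Omega_u^-)$ is $\varepsilon$-flat in $B_r(x_0)$ in a direction $\nu_r$, then it is $\varepsilon/2$-flat in $B_{\rho r}(x_0)$ in some direction $\nu_{\rho r}$ with $|\nu_{\rho r} - \nu_r| \leq C\varepsilon$, for a universal $\rho \in (0,1)$.'' Iterating across dyadic scales produces a Hölder modulus of continuity for the normal direction to $\partial\Omega_u^{\pm}$, which translates into a $C^{1,\eta}$ graph parametrization of $\partial\Omega_u^{\pm}$ on a neighborhood of $x_0$, completing the proof of Theorem \ref{thm1}.
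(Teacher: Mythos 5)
Your outline matches the paper's strategy in all essential respects: uniform ellipticity via Theorem \ref{thm0}, the Alt--Caffarelli toolbox (Lipschitz regularity, non-degeneracy, blow-up classification), and improvement of flatness via partial boundary Harnack and linearization, with the two-membrane problem at branch points and a transmission problem at non-branch two-phase points. The one place where your route diverges is the endgame. You propose to iterate the three flatness lemmas directly at every free boundary point to obtain a H\"older modulus for the normal; the paper instead uses the flatness decay to prove uniqueness of blow-ups and $C^{0,\eta}$ dependence of $\alpha(x_0)$ and $\bm{e}(x_0)$ (Lemma \ref{holder}), shows $\alpha=\lambda_+$ at branch points by an approximation along $\Gamma_{\rm op}^+$, and then recasts $u^+$ and $u^-$ as two decoupled one-phase Bernoulli problems with H\"older-continuous coefficients $x_2\alpha$, $x_2\beta$, so that the one-phase $\varepsilon$-regularity of \cite{S11} applies to each of $\partial\Omega_u^{\pm}$ separately. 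The paper's packaging buys something your sketch glosses over: near a branch point, $\partial\Omega_u^+$ contains one-phase and two-phase points interlaced at every scale, so a pointwise dyadic iteration must switch between the three dichotomies while keeping the limiting slopes consistent; the reduction to a one-phase problem with continuous coefficient absorbs this matching automatically.

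One technical point specific to the axisymmetric setting is missing from your plan and should be flagged. The interior drift $-\frac{1}{x_2}\partial_{x_2}$ is indeed harmless after rescaling (it contributes $O(r_k)$), but the free boundary conditions $|\nabla u^{\pm}|=x_2\lambda_{\pm}$ are \emph{not} scale-invariant: after blow-up they read $|\nabla u_k^{\pm}|=(y_2^0+r_kx_2)\lambda_{\pm}$, and in the linearized viscosity inequalities this produces error terms of size $r_k/\epsilon_k$ in the boundary conditions of the two-membrane and transmission problems. If the flatness $\epsilon_k$ decays much faster than the blow-up radius $r_k$, these terms do not vanish and the limit is not the constant-coefficient linearized problem you intend to use. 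The paper resolves this by extracting a subsequence of radii with $r_k=O(\epsilon_k^2)$ (see (\ref{rk}) and its use in Propositions \ref{lim1} and \ref{lim2}); some such normalization is needed before "freezing the coefficients at $x_0$" in your Step on identifying the linearized limit.
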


\begin{rem}
	Our approach may be applied to more general settings, such as
	\\
	(1) $\lambda_+(x)$, $\lambda_-(x)\in C_{loc}^{0,\alpha}(D)$ with a positive lower bound $\lambda_0$.
	\\
	(2) The axisymmetric two-phase flow with constant vorticity in three dimensions. The stream function $u$ solves $div\left( \frac{1}{x_2}\nabla u \right)=x_2\Lambda_+I_{\{u>0\}}+x_2\Lambda_-I_{\{u<0\}}$ in $\{u\neq0\}\cap D$, where $\Lambda_+$, $\Lambda_-$ are given constants.
\end{rem}

\begin{rem}
	In the present paper we consider the case $\lambda_+,\lambda_->0$. Our method can also be applied to the case $\lambda_+>0, \lambda_-=0$ (resp. $\lambda_->0, \lambda_+=0$) to get that $\partial\{u>0\}$ (resp. $\partial\{u<0\}$) is locally $C^{1,\eta}$.
\end{rem}

Utilizing the standard technique of iteration and bootstrapping we can get higher regularity.

\begin{thm}
	Let $u$ be as in Theorem \ref{thm1}. Then the free boundaries $\partial\Omega_u^{\pm} \cap D$ are locally $C^\infty$ graphs.
\end{thm}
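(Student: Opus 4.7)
My plan is to bootstrap from the $C^{1,\eta}$ regularity given by Theorem \ref{thm1} via a partial hodograph transform, following the classical Kinderlehrer--Nirenberg scheme. Two facts from the earlier parts of the paper make this strategy viable at every free boundary point $x_0$: by Theorem \ref{thm0} we have $x_{0,2}\geq b>0$, so the operator $L:=\Delta-\frac{1}{x_2}\partial_{x_2}$ is uniformly elliptic with real-analytic coefficients on a neighborhood of $x_0$; and by the free boundary conditions \eqref{bc} together with \eqref{bc2}, $|\nabla u^\pm|\geq x_{0,2}\lambda_\pm>0$ at $x_0$, so the gradient is genuinely non-vanishing.

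\textbf{One-phase points.} Suppose $x_0\in\Gamma_{\text{op}}^+$. After a rotation, $\Gamma_{\text{op}}^+$ is a $C^{1,\eta}$ graph $x_2=\varphi(x_1)$, and $\partial_{x_2}u>0$ near $x_0$. I would apply the partial hodograph map $\Phi(x_1,x_2)=(x_1,u(x_1,x_2))$, which is a $C^{1,\eta}$ diffeomorphism from a one-sided neighborhood of $x_0$ onto a half-ball in $\{y_2\geq0\}$ sending $\Gamma_{\text{op}}^+$ to $\{y_2=0\}$. Writing $x_2=v(y_1,y_2)$, the equation $Lu=0$ becomes a quasilinear uniformly elliptic equation for $v$ with real-analytic coefficients (depending on $v$, $\partial v$ and $y_2$), while $|\nabla u|=x_2\lambda_+$ translates into the fully nonlinear oblique boundary condition
\begin{equation*}
1+(\partial_{y_1}v)^2 = \lambda_+^2\,v^2\,(\partial_{y_2}v)^2 \quad \text{on } \{y_2=0\}.
\end{equation*}
Obliqueness holds because $\partial_{y_2}v=1/\partial_{x_2}u>0$. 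Schauder estimates for nonlinear oblique derivative problems (Lieberman; Kinderlehrer--Nirenberg) upgrade $v\in C^{1,\eta}$ to $v\in C^{2,\eta}$, and then a routine bootstrap differentiating the equation and boundary condition with respect to $y_1$ yields $v\in C^\infty$. Inverting $\Phi$ gives $\varphi\in C^\infty$.

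\textbf{Two-phase points.} If $x_0\in\Gamma_{\text{tp}}$, write $\Gamma_{\text{tp}}$ locally as $x_2=\varphi(x_1)$ with $\varphi\in C^{1,\eta}$. Apply the hodograph $\Phi^\pm(x_1,x_2)=(x_1,u^{\pm}(x_1,x_2))$ on each side; this is justified because $\partial_{x_2}u^\pm\neq 0$ at $x_0$ by the non-vanishing of $|\nabla u^\pm|$. One obtains two functions $v^\pm$ on opposite sides of $\{y_2=0\}$, each solving a quasilinear uniformly elliptic equation with analytic coefficients, coupled on $\{y_2=0\}$ by
\begin{equation*}
v^+=v^-, \qquad \frac{1+(\partial_{y_1}v^+)^2}{(\partial_{y_2}v^+)^2}-\frac{1+(\partial_{y_1}v^-)^2}{(\partial_{y_2}v^-)^2}=(v^+)^2(\lambda_+^2-\lambda_-^2).
\end{equation*}
This is a nonlinear transmission problem on a fixed half-plane interface. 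Schauder theory for such transmission problems (again through tangential differentiation and a freezing-coefficients argument) upgrades $v^\pm$ iteratively to $C^{k,\eta}$ for all $k$, hence to $C^\infty$, and therefore $\varphi\in C^\infty$.

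\textbf{Main obstacle.} The one-phase bootstrap is essentially textbook once the oblique problem has been set up, so the delicate step is verifying that the transmission system at two-phase points really fits into a Schauder-bootstrap framework: the obliqueness/ellipticity of the linearized system must be preserved along the iteration, which is what non-vanishing of $\nabla u^\pm$ at the boundary (guaranteed by \eqref{bc2} and Theorem \ref{thm0}) ultimately gives. One could alternatively avoid the two-side hodograph by flattening $\Gamma_{\text{tp}}$ via a $C^{1,\eta}$ diffeomorphism and treating the pair $(u^+,u^-)$ as a vector-valued solution to an elliptic system with nonlinear Bernoulli-type interface condition, but the hodograph route makes the gain of one derivative at each step most transparent.
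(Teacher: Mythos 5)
Your strategy is exactly what the paper gestures at (its own justification is the one-line remark that ``the standard technique of iteration and bootstrapping'' gives higher regularity), and your one-phase hodograph bootstrap is correct: near $\Gamma_{\text{op}}^\pm$ the operator $\mathcal{L}$ is uniformly elliptic with real-analytic coefficients by Theorem \ref{thm0}, the Bernoulli condition has an analytic, nonvanishing right-hand side (after rotation the factor $x_2$ becomes an affine function of the new coordinates, bounded below by $b$), and the Kinderlehrer--Nirenberg scheme applies. The same is true at \emph{non-branch} two-phase points, where $|B_{r_0}(x_0)\cap\{u=0\}|=0$ for some $r_0$, the two free boundaries coincide in a neighborhood, and your transmission system on $\{y_2=0\}$ is the correct local model.

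The genuine gap is at branch points. There $\{u=0\}$ has positive measure in every ball $B_r(x_0)$, so $\Omega_u^+$ and $\Omega_u^-$ are \emph{not} separated by a common interface: near $x_0$ the boundary $\partial\Omega_u^+$ decomposes into a one-phase part $\Gamma_{\text{op}}^+$ (where $|\nabla u^+|=x_2\lambda_+$) and a relatively closed contact set $\Gamma_{\text{tp}}$ (where the jump condition couples $u^+$ to $u^-$ across points of tangency of two \emph{distinct} graphs). Your two-sided hodograph maps $\Phi^\pm$ therefore do not send the two free boundaries onto the same hyperplane, and the transmission problem you write down is not the local model at such a point. What the paper actually provides there (Lemma \ref{holder} and the last lemma of Section 4) is that $u^+$ solves a one-phase problem $|\nabla u^+|=x_2\alpha(x)$ with $\alpha$ only $C^{0,\eta}$; the bootstrap then stalls at $C^{1,\eta}$, since reaching $C^{2,\eta}$ for $\partial\Omega_u^+$ would require $\alpha\in C^{1,\eta}$, and $\alpha$ on $\Gamma_{\text{tp}}$ is itself determined by $|\nabla u^-|$, i.e.\ by the unknown regularity of the other boundary. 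Consistently, the linearized problem at a branch point is the two-membrane problem (Proposition \ref{lim1}), whose solutions are in general no better than $C^{1,1}$ across the contact set, so no routine Schauder iteration yields $C^\infty$ there. This gap is present in the paper's own one-sentence proof as well; to be correct, the $C^\infty$ conclusion should be restricted to $\Gamma_{\text{op}}^\pm$ and to non-branch points of $\Gamma_{\text{tp}}$, or a genuinely different argument must be supplied at branch points.
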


\begin{rem}
	The elliptic operator $\mathcal{L}=\Delta -\frac{1}{x_2}\partial_2$ is singular near the axis $\{x_2=0\}$. However, in Section 2 we prove that the free boundary has a uniform distance from the axis, which implies that $\mathcal{L}$ is uniformly elliptic. Furthermore, we have to be careful under coordinate rotation since $\mathcal{L}$ does NOT keep invariant as the Laplacian operator does.
\end{rem}

\begin{rem}
	One of the main differences with the method in \cite{ACF84} is the lack of the ACF's monotonicity, where Caffarelli assumed that $\lambda_0=\min\{\lambda_1, \lambda_2\}$ in the functional $J_{\rm acf}(u)$ and got the Lipschitz regularity for the minimizer $u$. In fact, without such a condition in our functional $J_{\rm a,tp}(u)$, we can prove in Appendix C an ACF-type monotonicity formula, which is first addressed in \cite{C88}. Furthermore, David, Engelstein, Garcia and Toro gave a monotonicity formula for almost minimizers for $J_{\rm tp}(u)$ in Section 7 of \cite{DEGT21}, which implies the Lipschitz regularity of $u$ across the free boundaries.
\end{rem}

\begin{rem}
	Here, in the present paper, the value of $|\nabla u^\pm|$ involves $x_2$ along the free boundaries. Compared to the elegant work \cite{PSV21}, when we construct the "linearized" function sequence to measure the difference between the blow-up sequence and the half-plane solution, it is technically more involved as the free boundary conditions for the blow-up sequence do not remain invariant. We will deal with it in Section 3.
\end{rem}

\begin{rem}
	A tantalizing question may be addressed is that whether we can develop the well-posedness result in \cite{ACF84-1}\cite{ACF84-2} to establish the existence of a cavity in incompressible jets with two fluids, even in two-dimensional case, see Figure 6. One of the key steps is to seek a mechanism to guarantee the continuous fit condition, namely, the free boundary will connect the endpoint of the solid nozzle wall. This might be a challenging issue, which will be explored in our forthcoming paper.
\end{rem}

\begin{figure}[!h]
	\includegraphics[width=90mm]{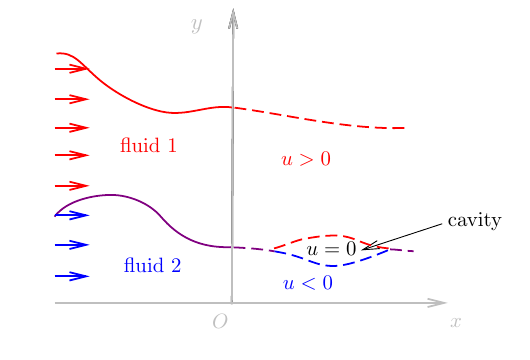}
	\caption{}
\end{figure}

The main underlying idea of this paper is to set up an iterative improvement of flatness argument in a neighborhood of a free boundary point. We follow the strategy of De Silva et al. developed in \cite{SFS14}, and De Philippis et al. in \cite{PSV21}. The key ingredients of the proof are the partial Harnack inequality and the analysis of the linearized problem. We first use the standard technique of blow-up analysis. The partial boundary Harnack inequality for the elliptic operator $\mathcal{L}=\Delta -\frac{1}{x_2}\partial_2$ allows the compactness of the linearizing sequence, and we obtain the limiting problem by viscosity means. The regularity of the limiting problem allows the desired decay of flatness.

Our paper is organized as follows. In Section 2, we exhibit some basic properties of the minimizer, including the positive distance between the free boundary and the axis of symmetry, the non-degeneracy and the Lipschitz regularity of the minimizer. Moreover, we introduce the viscosity solutions, derive their optimal boundary conditions and establish the relationship between the viscosity solution and the minimizer. In Section 3, we investigate the improvement of flatness of the blow-up sequence. We set a linearizing sequence, deduce the partial boundary Harnack inequality, get the "linearized" problem and then argue by contradiction to show the flatness decay. In Section 4, we prove our main result. The appendices are prepared for some supplementary details. In Appendix A, we check the free boundary conditions of the minimizer $u$ by the method of domain variation. In Appendix B and C, we study the non-degeneracy and the Lipschitz regularity of $u$ for sake of completeness. In Appendix D, we give a preparing lemma for partial boundary Harnack inequality. In Appendix E, we prove a touching lemma, which is used to derive the viscosity boundary conditions. In Appendix F, we list two regularity theorems for the limiting problem we get in Section 3.

\section{Some basic properties of the minimizer}

The main purpose of this section is to present some basic properties of the minimizer $u$. Notice that every property is based on the result of the first subsection, which gives a uniform distance between the free boundaries and the $x_1$-axis.

\subsection{Uniform distance between free boundaries and the axisymmetric axis}
In axisymmetric problems, the elliptic operator $\mathcal{L}=\Delta -\frac{1}{x_2}\partial_2$ appears to be quite different from the Laplacian operator $\Delta$. The presence of the singularity near the axis makes the maximum principle and elliptic estimates unavailable, and the Lipschitz regularity and the non-degeneracy of $u$ may fail. It is of great importance to prove the uniform distance between the free boundaries and the symmetric axis, which is different from the works in \cite{ACF84} and \cite{PSV21}, and requires delicate arguments.

\begin{prop} \label{prop1}
	(Uniform distance between free boundaries and $x_1$-axis)
	Suppose that $u$ is a minimizer of $J_{\rm a,tp}$ in $D$. Then there is a uniform constant $b\in(0,1)$ independent of the free boundary point $x$ such that $\partial\Omega_u^{\pm} \subset \{x_2>b\}$. 
\end{prop}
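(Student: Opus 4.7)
The plan is to bound $u$ from above by an explicit $\mathcal{L}$-harmonic one-dimensional profile matching the Dirichlet datum on the axis. Let
\begin{equation*}
\phi_\alpha(x_2) := -1 + \alpha x_2^2, \qquad \alpha > 0.
\end{equation*}
A direct computation gives $\mathcal{L}\phi_\alpha = 2\alpha - \tfrac{1}{x_2}(2\alpha x_2) = 0$, and $\phi_\alpha|_{\{x_2 = 0\}} = -1$ coincides with the Dirichlet datum satisfied by $u$. Moreover $\phi_\alpha < 0$ on the strip $\{0 \leq x_2 < 1/\sqrt{\alpha}\}$. If one establishes the pointwise comparison $u \leq \phi_\alpha$ on $D \cap \{x_2 < 1/\sqrt{\alpha}\}$ for some uniform $\alpha > 0$ depending only on $D$ and $\lambda_\pm$, it follows immediately that $u(x_1, x_2) < 0$ whenever $x_2 < 1/\sqrt{\alpha}$, so the free boundary is forced into $\{x_2 \geq b\}$ for any $b < 1/\sqrt{\alpha}$.

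To establish the comparison I would argue against the minimality via the competitor
\begin{equation*}
v := \min\{u, \phi_\alpha\} \text{ in } B_R \Subset D, \qquad v := u \text{ outside } B_R.
\end{equation*}
Since $u = \phi_\alpha = -1$ on $\{x_2 = 0\}$, $v$ lies in the admissible class $\mathcal{K}'$. Replacing $u$ by $\phi_\alpha$ on the coincidence set $\{u > \phi_\alpha\}$ strictly decreases the weighted Dirichlet term $\int \tfrac{|\nabla \cdot|^2}{x_2}\,dX$, because $\phi_\alpha$ is exactly the $\mathcal{L}$-minimizer of the weighted Dirichlet energy for its own boundary data, and the singular weight $1/x_2$ amplifies this gain precisely near the axis. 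For the potential term $\int x_2(\lambda_+^2 I_{\{\cdot > 0\}} + \lambda_-^2 I_{\{\cdot < 0\}})\,dX$, the portion of $\{u > \phi_\alpha\}$ contained in $\{u > 0\}$ contributes a change $-(\lambda_+^2 - \lambda_-^2)x_2 \leq 0$ (using $\lambda_+ \geq \lambda_-$), while the portion contained in $\{u = 0\}$ contributes an increase of at most $\lambda_-^2 \int_{B_R \cap \{x_2 < 1/\sqrt{\alpha}\}} x_2\,dX \lesssim R^2 \lambda_-^2/\sqrt{\alpha}$. A weighted Hardy-type inequality against $1/x_2$ near the axis then shows that the Dirichlet saving strictly dominates the potential cost when $\alpha$ is small enough, contradicting $J_{\rm a,tp}(u) \leq J_{\rm a,tp}(v)$ unless the coincidence set is empty. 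A finite covering of $D$ by such balls produces a single uniform $\alpha$, and hence a uniform constant $b \in (0, 1)$.

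The main obstacle is exactly this Dirichlet-versus-potential balance: the weight $1/x_2$ is the feature that makes the argument work, since a naive unweighted Laplacian analogue would fail --- one could push the zero level set down to the axis almost for free in the ordinary Dirichlet norm. Quantifying the weighted Hardy inequality near $x_2 = 0$, and ensuring the lateral boundary match $u \leq \phi_\alpha$ on $\partial B_R \cap \{x_2 < 1/\sqrt{\alpha}\}$ (which may require a preliminary non-uniform qualitative bound on $u$ or a continuation-in-$\alpha$ argument starting from trivially small $\alpha$), are the main technical points I expect to require delicate handling.
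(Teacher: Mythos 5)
Your barrier $\phi_\alpha(x_2)=-1+\alpha x_2^2$ is a genuinely nice object (it is indeed $\mathcal{L}$-harmonic and matches the axis datum), but the step that would make it useful --- the comparison $u\leq\phi_\alpha$ near the axis --- is not established by your competitor argument, and I do not see how to repair it along the lines you describe. First, $v=\min\{u,\phi_\alpha\}$ does not obviously decrease the weighted Dirichlet term: on the coincidence set $\{u>\phi_\alpha\}\cap B_R$ the function $\phi_\alpha$ is the energy minimizer only for \emph{its own} boundary values, whereas the admissible replacement must agree with $u$ on $\partial(\{u>\phi_\alpha\}\cap B_R)$, and these differ on $\partial B_R$. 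Second, and more seriously, the proposition is precisely about excluding the cavity $\{u=0\}$ (and fluid 1) from a neighborhood of the axis, and on the portion of $\{u>\phi_\alpha\}$ where $u\equiv 0$ the replacement \emph{increases both terms}: the Dirichlet term goes from $0$ to $\int 4\alpha^2 x_2\,dX>0$ and the potential term gains $\lambda_-^2\int x_2\,dX>0$. There is no identified compensating gain --- the "weighted Hardy inequality" you invoke is not attached to any actual inequality between these quantities --- so the contradiction with minimality does not materialize in exactly the configuration you need to rule out. Finally, as you partly acknowledge, $v$ is only admissible ($u-v\in W^{1,2}_0(B_R)$) if $u\leq\phi_\alpha$ already holds on $\partial B_R\cap\{x_2<1/\sqrt{\alpha}\}$, which is essentially the statement being proven; a continuation-in-$\alpha$ argument would need the strict inequality to be open and closed, and neither is justified. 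Note also that a pure maximum-principle comparison cannot substitute for the energy argument here, because $u$ fails to be a subsolution of $\mathcal{L}u=0$ across the one-phase boundary $\Gamma_{\rm op}^-$ (the distributional Laplacian of $-u^-$ has a negative singular part there), which is exactly where the barrier would have to be pushed through.

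For contrast, the paper's proof avoids any comparison across the free boundary: it first shows (after a suitable choice of the lowest free boundary point $M$) that the vertical segment from $M$ down to its projection $N$ on the axis lies entirely in $\{u<0\}$, where $u$ solves the equation classically; a scaling $u_0(x)=t^{-2}u(x_0+tx)$ with $t=x_2^0/2$ turns $\mathcal{L}$ into a uniformly elliptic divergence-form operator and interior estimates give $|\nabla u(x_0)|\leq Cx_2^0$; integrating along $\overline{MN}$ then forces $1=u(M)-u(N)\leq Cb\cdot|M-N|$, impossible for small $b$. If you want to keep your barrier idea, the missing ingredient is exactly such an interior gradient bound $|\nabla u|\leq Cx_2$ in $\{u<0\}$ near the axis --- which already yields the result directly and makes the barrier unnecessary.
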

\begin{proof}
	It is valid to claim that the minimizer $u$ is continuous in $D$. In fact, $u$ is H\"older continuous in any subset of $D\cap\{x_2>0\}$ where the elliptic operator $\mathcal{L}$ is strictly elliptic. The symmetry axis $x_2=0$ is in fact inside the fluid domain, and we can remove the singularity of $\mathcal{L}$ by considering the minimizers of the approximating functionals
	$$\mathcal{J}_m = \int_{B_r} \left[ \frac{|\nabla u|^2}{x_2+m} + (x_2+m)\left( \lambda_+^2 I_{\{u>0\}} + \lambda_-^2 I_{\{u<0\}} \right) \right]dX$$
	for $m\rightarrow0+$ to get the H\"older continuous of $u$ near $x_2=0$.
	
	From the continuity of $u$ in $D$ we know that $\partial\Omega_u^+$ lies above $\partial\Omega_u^-$, and it suffices to show $\partial\Omega_u^-\subset\{x_2>b\}$. We suppose, by way of contradiction, that for any $0<b\ll 1$ there is a point $M=(m_1,m_2)\in\partial\Omega_u^-$ such that $m_2<b$. Let $N=(m_1,0)$ be the injection of $M$ on $\{x_2=0\}$. (Please see Figure 7.)
	
	\begin{figure}[!h]
		\includegraphics[width=100mm]{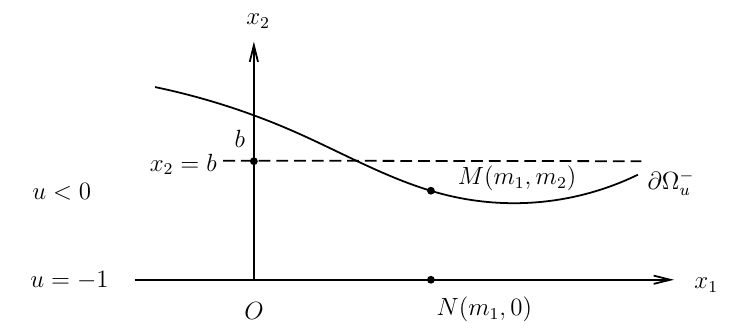}
		\caption{}
	\end{figure}
	
	We claim that after proper choice of $m_2<b$, the segment $\overline{MN}$ is totally contained in $\Omega_u^-$ except for the endpoint $M$. That is, $tM+(1-t)N\in\{u<0\}$ for $0\leq t<1$. In fact if not, then for any point $P_1(m_1,p_1)\in\partial\Omega_u^-$ with $0<p_1<m_2$, there exists another point $P_2(m_1,p_2)$ with $0<p_2<p_1$, and for such $P_2$ there exists $P_3(m_1,p_3)$ with $0<p_3<p_2$. Repeat the process we will get a sequence of points $\{P_k\}_{k=1}^\infty$ and a decreasing sequence $\{p_k\}_{k=1}^\infty$ satisfying $m_2>p_1>p_2>\dots>p_n>\dots$, which converges to $p_\infty\geq 0$ up to a subsequence as in Figure 8. The fact that $\partial\Omega_u^-$ is close implies $(m_1,p_\infty)=:P_{\infty}\in\partial\Omega_u^-$. Notice that $u=-1$ on $\{x_2=0\}$ leads to $p_\infty>0$, and we can take $M=P_\infty$. Hence $\overline{MN}\backslash\{M\}\subset\Omega_u^-$, a contradiction.
	
	\begin{figure}[!h]
		\includegraphics[width=105mm]{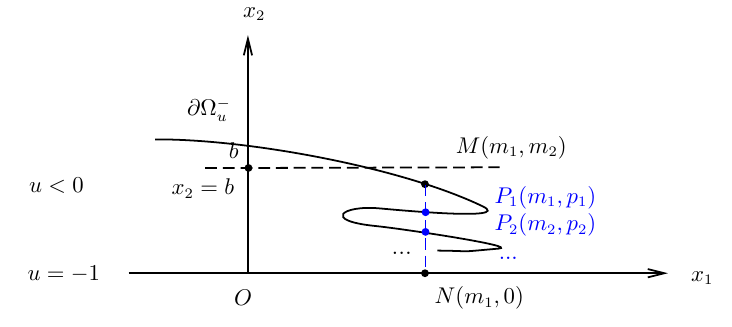}
		\caption{}
	\end{figure}
	
	The subsequent proof is based on the idea of \cite{ACF83} and \cite{CDZ21} to derive the Lipschitz regularity of the minimizer $u$ near the axis. Set
	\begin{equation*}
	u_0(x)=\frac{1}{t^2}u(x_0+tx)
	\end{equation*}
	to remove the singularity near $x_2=0$, where $x_0=(x_1^0,x_2^0)\in B_r(M)\cap\{u<0\}, x_2^0<b$ and $t=\frac{x_2^0}{2}$.
	Then $u_0$ solves the equation
	\begin{equation*}
	div\left(\frac{\nabla u_0}{2+x_2}\right)=0
	\end{equation*}
	in a small neighborhood of $x_0$. Using the elliptic estimate in \cite{GT01}, Chapter 8,
	\begin{equation*}
	\left|\frac{\nabla u(x_0)}{x_2^0}\right|=\frac{1}{2}|\nabla u_0(0)|\leq C
	\end{equation*}
	where $C$ is a constant independent of $x_0$.
	Clearly,
	\begin{equation*}
	|\nabla u(x_0)|\leq Cx_2^0<Cb
	\end{equation*}
	for $\forall x_0\in B_r(M)\cap\{u<0\}$.
	
	On the other hand, 
	\begin{equation*}
	1=u(M)-u(N)\leq\int_{0}^{1}|\nabla u(tM+(1-t)N)|dt \leq Cb,
	\end{equation*}
	which implies $b\geq\frac1C$, a contradiction. This completes the proof.
\end{proof}

\subsection{Non-degeneracy and Lipschitz regularity}

Now we can establish the non-degeneracy and the Lipschitz regularity of the minimizer, which were first proved in \cite{ACF84} for the functional $J_{\text{acf}}$ in two-dimensional case.

\begin{prop}  \label{prop2}
	
	(1) (Non-degeneracy of the minimizer)
	Suppose that $u$ is a minimizer of $J_{\rm a,tp}$ in $D$. For every $x_0 \in \partial\Omega_u^{\pm} \cap D$, $B_r(x_0)\subset D$ with $r\leq\frac b2$ and any  $0<\kappa<1$, there is a constant $c=c(\lambda_{\pm},\kappa)$ such that if
	\begin{equation*}
	\frac1r\left(\fint_{\partial B_r(x_0)}(u^{\pm})^2dS\right)^{1/2} <c,
	\end{equation*}
	then $u^{\pm}\equiv0$ in $B_{\kappa r}(x_0)$.
	
	(2)	(Lipschitz regularity of the minimizer)
	Let $u$ be a minimizer of $J_{\rm a,tp}$ in $D$. Then $u \in C_{loc}^{0,1}(D)$.
\end{prop}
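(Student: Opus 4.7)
The plan is to adapt the classical ACF-type arguments to the degenerate operator $\mathcal{L}=\Delta-\frac{1}{x_2}\partial_{x_2}$, using Proposition \ref{prop1} as the key leverage. Since every free boundary point $x_0=(x_1^0,x_2^0)$ satisfies $x_2^0>b$, for any $r\le b/2$ the ball $B_r(x_0)$ lies inside $\{x_2\ge b/2\}$; on such a ball the weights $1/x_2$ and $x_2$ are bounded above and below by constants depending only on $b$ and $\mathrm{diam}(D)$, so after multiplying through by $x_2$ the operator $\mathcal{L}$ becomes uniformly elliptic in divergence form with smooth, bounded coefficients. Consequently the interior regularity, maximum principle, boundary Harnack, and Poisson-kernel estimates available for the Laplacian transfer to $\mathcal{L}$ with constants depending only on $b$ and $\mathrm{diam}(D)$, and this is the single feature of the axisymmetric setting that has to be invoked.

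For part (1), I would follow the standard competitor argument of \cite{ACF84}, treating $u^+$ (the case of $u^-$ being symmetric). The idea is to build a competitor $w$ that agrees with $u$ near $\partial B_r(x_0)$, equals $-u^-$ on $B_{\kappa r}(x_0)$ (so $\{w>0\}\cap B_{\kappa r}(x_0)=\emptyset$), and in between is obtained from the $\mathcal{L}$-harmonic extension of $u^+|_{\partial B_r(x_0)}$ by a suitable downward shift. The Poisson-kernel estimate for the uniformly elliptic operator $\mathcal{L}$ on $B_r(x_0)$ converts the $L^2$-smallness of $u^+$ on $\partial B_r(x_0)$ appearing in the hypothesis into $L^\infty$-smallness of this harmonic extension on $B_{\kappa r}(x_0)$, which controls the size of the shift. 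Comparing $J_{\rm a,tp}(w)$ with $J_{\rm a,tp}(u)$, the Dirichlet-energy change is controlled by this small shift, while the bulk term records a definite gain of order $\lambda_+^2 r^2\bigl|\{u^+>0\}\cap B_{\kappa r}(x_0)\bigr|$. Choosing the smallness constant small enough in terms of $\kappa$, $\lambda_\pm$ and $b$ makes the bulk gain dominate, contradicting the minimality of $u$ unless $|\{u^+>0\}\cap B_{\kappa r}(x_0)|=0$. The continuity of $u$ (established in the proof of Proposition \ref{prop1}) then upgrades this to $u^+\equiv 0$ on $B_{\kappa r}(x_0)$.

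For part (2), the Lipschitz regularity reduces to the one-sided linear growth bound $\sup_{B_r(x_0)}u^\pm\le C r$ at every free boundary point $x_0\in\partial\Omega_u^\pm$ with $r\le b/2$. Given this growth bound, the interior gradient estimate for $\mathcal{L}$ applied to the $\mathcal{L}$-harmonic function $u^\pm$ inside $B_{d(x)/2}(x)$, where $d(x):=\mathrm{dist}(x,\partial\Omega_u^\pm)$, yields
\begin{equation*}
|\nabla u^\pm(x)|\le \frac{C}{d(x)}\sup_{B_{d(x)}(x)}u^\pm\le C,
\end{equation*}
and a covering argument promotes this pointwise bound to a global Lipschitz estimate on any compact subset of $D$. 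The growth bound itself can be obtained either by comparison with an explicit $\mathcal{L}$-supersolution on the annular region $B_r(x_0)\setminus B_{r/2}(x_0)$ carrying the prescribed Bernoulli slope on the inner sphere, or, more conceptually, as a direct consequence of the ACF-type monotonicity formula established in Appendix C, whose finiteness at scale $0$ is classically equivalent to a one-sided linear growth bound at free boundary points.

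The main obstacle is a combination of bookkeeping and the asymmetry of $\mathcal{L}$ under rotation: every estimate above carries a tacit dependence on $b$ through the weight $1/x_2$, and because $\mathcal{L}$ is not rotation-invariant (as recorded in the remarks after Theorem \ref{thm1}) one cannot freely rotate coordinates to reduce to a canonical one-phase configuration as in the original ACF proof. Either one writes barriers that are radial in the fixed $(x_1,x_2)$-coordinates and absorbs the drift term $-\frac{1}{x_2}\partial_{x_2}$ as a lower-order perturbation (using $x_2\ge b/2$), or one reformulates everything in divergence form with weight $1/x_2$ and treats the coefficient as a smooth positive function on the relevant ball. Both routes reduce to the two-dimensional ACF/PSV arguments; the full details are carried out in Appendices B and C.
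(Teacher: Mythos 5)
Your overall strategy is the same as the paper's: Proposition \ref{prop1} places every ball $B_r(x_0)$ with $r\le b/2$ inside $\{x_2\ge b/2\}$, so $\mathcal{L}$ is uniformly elliptic there and the two-dimensional ACF machinery transfers; the paper's Appendix B runs the harmonic-replacement competitor for non-degeneracy, and Appendix C runs the variable-coefficient ACF monotonicity formula of Caffarelli for the Lipschitz bound, exactly the two routes you name. Two cautions, though. First, in part (1) your accounting (``Dirichlet change controlled by the shift, bulk gain of order $\lambda_+^2 r^2|\{u^+>0\}\cap B_{\kappa r}|$ dominates'') would at best yield a smallness bound on $|\{u^+>0\}\cap B_{\kappa r}(x_0)|$, not $u^+\equiv 0$. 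The mechanism that actually closes the argument --- and the one Appendix B uses --- is self-absorption: the Dirichlet increase equals a boundary flux $\int_{\partial B_{\kappa r}}u^+\,|\nabla v|\,dS$; the gradient estimate for the replacement gives $|\nabla v|\le C\gamma$ with $\gamma=\frac1r\bigl(\fint_{\partial B_r}(u^+)^2 dS\bigr)^{1/2}$; and a trace inequality bounds $\int_{\partial B_{\kappa r}}u^+$ by the Dirichlet energy plus the measure term themselves, so that $\int_{B_{\kappa r}}\bigl(\frac{|\nabla u^+|^2}{x_2}+x_2\lambda_+^2 I_{\{u^+>0\}}\bigr)dX\le C\gamma\cdot(\text{the same quantity})$, which forces it to vanish once $\gamma$ is small. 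Second, in part (2) the barrier-only alternative you float does not suffice for the two-phase problem: near a two-phase point the free boundary condition controls only the difference $|\nabla u^+|^2-|\nabla u^-|^2$, so a supersolution comparison cannot bound each phase separately; the ACF-type monotonicity formula (your second alternative, and the paper's actual route in Appendix C, combined with the fact that $u^\pm$ are subsolutions of the divergence-form equation) is genuinely needed there.
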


In order to keep the presentation clean, we refer the two proofs to Appendices B and C. It is noteworthy that the proof of (2) in Proposition \ref{prop2} falls into two cases, one for those points near the axis which tend to be the interior points in the fluid, and the other for those away from the axis crossing the free boundaries.

\subsection{Classification of the blow-up limit}

Let $u$ be a local minimizer of $J_{\text{a,tp}}$ in a ball $B\Subset D$. We consider its blow-up sequence
\begin{equation}
u_{x_0,r}(x):=\frac{u(x_0+rx)}{r}
\end{equation}
at $x_0=(x_1^0,x_2^0) \in \partial\Omega_{u}^{\pm}$ for $0<r<dist(x_0,\partial B)$. Then $u_{x_0,r}$ is well-defined in $B_R\subset\{x\in\mathbb{R}^2\ |\ x_0+rx\in B\}$ and vanishes at the origin. We simply denote $u_r=u_{x_0,r}$ without causing misunderstanding. Given a sequence $r \rightarrow 0$ we call $u_r$ a \emph{blow-up sequence}, and $r$ its \emph{blow-up radius}. Utilizing the Ascoli-Arzela lemma together with the Lipschitz regularity of $u$, we obtain that there is a subsequence of $u_r$ that converges uniformly to $u_0$ in $B_R$, where $u_0$ is a Lipschitz function vanishing at the origin. We call $u_0$ a \emph{blow-up limit} at $x_0$, and we denote $\mathcal{BU}(x_0)$ to be the set of all blow-ups at $x_0$.

The following lemma gives a classification of the functions in $\mathcal{BU}(x_0)$. Notice that the uniqueness of blow-up limits remains unknown now, and would be proved in Section 4 after we get the flatness decay.

\begin{prop}  \label{prop4}
	(Classification of the blow-up limits)
	Let $u$ be a local minimizer of $J_{\rm a,tp}$ in $D$, and $u_0$ a blow up limit at $x_0=(x_1^0,x_2^0) \in \partial\Omega_u$. Then, there exists a pair $\alpha=\alpha(x_0)$, $\bm{e}=(e_1,e_2)=\left(e_1(x_0),e_2(x_0)\right)$, such that
	\begin{equation} \label{bu}
	u_0(x)=
	\begin{cases}
	x_2^0\alpha(x\cdot \bm{e})^+ - x_2^0\beta(x\cdot \bm{e})^-=:H_{\alpha,\bm{e}}, \ \ \quad x_0\in \Gamma_{\rm{tp}}, \\
	x_2^0\lambda_+(x\cdot \bm{e})^+,\qquad\qquad\qquad\qquad\qquad x_0\in \Gamma_{\rm{op}}^+, \\
	x_2^0\lambda_-(x\cdot \bm{e})^-, \qquad\qquad\qquad\qquad\qquad x_0 \in \Gamma_{\rm{op}}^-,
	\end{cases}
	\end{equation}
	where $e$ is a unit vector and $\alpha, \beta$ satisfying $\alpha^2-\beta^2=\lambda_+^2-\lambda_-^2$ and $\alpha\geq\lambda_+$, $\beta\geq\lambda_-$.
\end{prop}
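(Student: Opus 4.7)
The plan is the classical one for free-boundary blow-ups: extract a subsequential limit $u_0$, identify the limiting PDE, the rescaled free-boundary conditions, and the minimality property satisfied by $u_0$, and then invoke the known planar classification of global minimizers of the standard two-phase Bernoulli functional. First, Proposition \ref{prop1} ensures $x_2^0\geq b>0$, so for $r$ sufficiently small $u_r$ is defined on arbitrarily large balls and stays well away from the singular axis. The uniform Lipschitz bound of Proposition \ref{prop2}(2) is preserved by the scaling, so a subsequence of $\{u_r\}$ converges locally uniformly to a Lipschitz function $u_0:\mathbb{R}^2\to\mathbb{R}$; interior elliptic regularity on $\{u_r>0\}$ and $\{u_r<0\}$ upgrades this to local uniform convergence of gradients on $\{u_0\neq 0\}$.

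A direct computation shows $u_r$ satisfies
\[
\Delta u_r-\frac{r}{x_2^0+rx_2}\,\partial_{x_2}u_r=0 \quad\text{in }\{u_r\neq 0\},
\]
and since the drift coefficient tends to $0$ uniformly on compacts, $u_0$ is harmonic in $\{u_0>0\}\cup\{u_0<0\}$. The free boundary conditions (\ref{bc}) and (\ref{bc2}) rescale to
\[
|\nabla u_r^+|^2-|\nabla u_r^-|^2=(x_2^0+rx_2)^2(\lambda_+^2-\lambda_-^2)\;\;\text{on the two-phase part,}\qquad |\nabla u_r^\pm|=(x_2^0+rx_2)\lambda_\pm\;\;\text{on the one-phase parts,}
\]
whose right hand sides converge uniformly on compacts to $(x_2^0)^2(\lambda_+^2-\lambda_-^2)$ and $x_2^0\lambda_\pm$ respectively. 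These are precisely the Bernoulli conditions of the standard two-phase free-boundary problem with parameters $x_2^0\lambda_\pm$.

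Next, by a scaling-and-comparison argument one checks that $u_r$ is a local minimizer of
\[
\int_{B_R}\!\frac{|\nabla v|^2}{x_2^0+rx_2}+(x_2^0+rx_2)\bigl(\lambda_+^2 I_{\{v>0\}}+\lambda_-^2 I_{\{v<0\}}\bigr)\,dX,
\]
and the local Hausdorff convergence of the free boundaries $\partial\{u_r>0\}\to\partial\{u_0>0\}$ and $\partial\{u_r<0\}\to\partial\{u_0<0\}$, secured by the uniform Lipschitz bound together with the non-degeneracy from Proposition \ref{prop2}(1), allows one to pass the indicator terms to the limit and conclude that $u_0$ is a global minimizer on $\mathbb{R}^2$ of the standard two-phase functional with constants $x_2^0\lambda_\pm$. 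Once this is established, the planar classification of entire two-phase Bernoulli minimizers vanishing at the origin --- due to Alt--Caffarelli--Friedman \cite{ACF84} and extended by De Philippis--Spolaor--Velichkov \cite{PSV21} to cover branch points --- gives that $u_0$ equals $a(x\cdot\bm e)^+-b(x\cdot\bm e)^-$ with $a^2-b^2=(x_2^0)^2(\lambda_+^2-\lambda_-^2)$, $a\geq x_2^0\lambda_+$, $b\geq x_2^0\lambda_-$ when $x_0\in\Gamma_{\text{tp}}$, and the half-plane $x_2^0\lambda_\pm(x\cdot\bm e)^\pm$ when $x_0\in\Gamma_{\text{op}}^\pm$. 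Setting $\alpha=a/x_2^0$ and $\beta=b/x_2^0$ puts the two-phase formula into the stated form.

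The main obstacle is the minimality transfer in the third step: lower semicontinuity of the Dirichlet term is standard, but the bulk terms $I_{\{u_r>0\}}$ and $I_{\{u_r<0\}}$ are not $L^1$-continuous without additional structural input. This is exactly where Proposition \ref{prop2}(1) is indispensable --- it prevents the positive and negative phases from collapsing under the blow-up and, together with the uniform Lipschitz bound, yields the local Hausdorff convergence of the free boundaries needed to identify the limiting functional. With this in hand the classification is a direct citation, whereas without it the blow-up limit could fail to satisfy the correct minimality and the argument would break down.
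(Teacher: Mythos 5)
Your strategy is sound and the compactness, PDE identification, and minimality-transfer steps are exactly the standard ones (the weights $\tfrac{1}{x_2^0+rx_2}$ and $x_2^0+rx_2$ converge uniformly to constants on compacts thanks to Proposition \ref{prop1}, so together with non-degeneracy and the uniform Lipschitz bound the limit $u_0$ is indeed a local minimizer of the constant-coefficient two-phase functional with parameters $x_2^0\lambda_\pm$). However, your route is genuinely different from the paper's: the paper does not pass minimality to the limit and then classify entire minimizers; it applies a Weiss-type monotonicity formula directly to the axisymmetric minimizer $u$ at $x_0$ to conclude that every blow-up limit is one-homogeneous, and then classifies one-homogeneous sign-changing solutions in the plane via the eigenfunctions of the spherical Laplacian (one-phase points are dispatched by citing \cite{V23}). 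Your approach trades the need for an (almost-)Weiss formula for the degenerate operator $\mathcal{L}$ against the need for a Liouville-type classification of the limit problem; the paper's approach keeps all the work on the original functional.

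The one step you should not treat as a black box is the final citation. Neither \cite{ACF84} nor \cite{PSV21} states a classification of \emph{arbitrary entire planar minimizers} of $J_{\rm tp}$ vanishing at the origin; what \cite{PSV21} proves is the classification of \emph{blow-up limits} of minimizers, and that proof itself runs through one-homogeneity via the Weiss formula. Since your $u_0$ is a blow-up of the axisymmetric problem, not of the standard one, you cannot invoke that statement verbatim. The gap is fixable with one more standard argument: apply the Weiss monotonicity formula for the standard functional to $u_0$ at $0$; its limits as $r\to 0$ and $r\to\infty$ are the energies of the blow-up and blow-down of $u_0$, both of which are one-homogeneous global minimizers and hence (in $\mathbb{R}^2$, with $0$ a two-phase point by non-degeneracy) two-plane solutions. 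A direct computation shows every two-plane solution $H_{\alpha,\bm e}$ with the constraint $\alpha^2-\beta^2=(x_2^0)^2(\lambda_+^2-\lambda_-^2)$ has the same Weiss energy density, so $W(u_0,0,r)$ is constant in $r$, forcing $u_0$ itself to be one-homogeneous and therefore equal to $H_{\alpha,\bm e}$. With that addendum your proof closes; without it, the classification step rests on a theorem that is not actually in the cited references.
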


\begin{proof}
	For $x_0 \in \Gamma_{\text{op}}^{\pm}$, the blow-up limit is a half-plane solution when the dimension $n=2$, referring to \cite{V23}. For $x_0 \in \Gamma_{\text{tp}}$, the proof is similar as in \cite{PSV21}. We use the Weiss monotonicity formula to get the one-homogeneous of the function $u_0(x)$. Then the eigenfunction of the spherical Laplacian gives the form of $u_0(x)$. We omit the details here.
\end{proof}

Proposition \ref{prop4} says that the blow-up sequence at a two-phase free boundary point $x_0$ is close to a two-plane solution $H_{\alpha,\bm{e}}$. In fact, in a small neighborhood of $x_0$, the blow-up sequence is uniformly close to $H_{\alpha,\bm{e}}$.

\begin{prop}  \label{prop5}
	Suppose that $u$ is a minimizer of $J_{\rm a,tp}$ in $D$ and $x_0$ is a free boundary point on $\Gamma_{\rm tp}$. 
	\\
	Then for every $\epsilon > 0$, there are $r > 0, \rho > 0$ and a two-plane function $H_{\alpha,\bm{e}}$ defined as in (\ref{bu}) such that
	\begin{equation} \label{eq1}
	\Vert u_{y_0,r} - H_{\alpha,\bm{e}} \Vert_{L^\infty(B_1)} \leq \epsilon
	\end{equation}
	for every $y_0 \in B_{\rho}(x_0)$.
\end{prop}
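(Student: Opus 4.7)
The plan is to combine a compactness step at the center $x_0$ with the local Lipschitz regularity of $u$ to upgrade the approximation to nearby centers $y_0$. Because the statement of Proposition \ref{prop5} allows $H_{\alpha,\bm{e}}$ to depend on $\epsilon$, we do \emph{not} need any uniqueness of blow-ups at $x_0$ and no contradiction argument is necessary; the proposition will follow as a kind of uniform continuity in the center.

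Step 1 (compactness and classification at $x_0$). By Proposition \ref{prop2}(2) the minimizer $u$ is locally Lipschitz in $D$, so for some $r_1>0$ the rescaled family $\{u_{x_0,r}\}_{0<r<r_1}$ is uniformly Lipschitz on $B_1$ and bounded (because $u_{x_0,r}(0)\to 0$). Hence the Arzelà-Ascoli theorem produces a sequence $r_k\to 0$ along which $u_{x_0,r_k}$ converges uniformly on $B_1$ to some $u_0\in\mathcal{BU}(x_0)$. Since $x_0\in\Gamma_{\mathrm{tp}}$, Proposition \ref{prop4} gives $u_0=H_{\alpha,\bm{e}}=x_2^0\alpha(x\cdot\bm{e})^+-x_2^0\beta(x\cdot\bm{e})^-$ with $\alpha\geq\lambda_+$, $\beta\geq\lambda_-$, $\alpha^2-\beta^2=\lambda_+^2-\lambda_-^2$. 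For the prescribed $\epsilon>0$, pick $k$ so large that
\begin{equation*}
\Vert u_{x_0,r_k}-H_{\alpha,\bm{e}}\Vert_{L^\infty(B_1)}\leq \tfrac{\epsilon}{2},
\end{equation*}
and set $r:=r_k$. This choice of $r$ and $H_{\alpha,\bm{e}}$ will be the one used in the conclusion.

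Step 2 (uniformization in the center via Lipschitz bounds). Let $L$ be the Lipschitz constant of $u$ on a neighborhood $U$ of $x_0$ with $\overline{B_r(x_0)}\Subset U$. For any $y_0\in B_\rho(x_0)$ (with $\rho$ to be chosen) and any $x\in B_1$, both points $x_0+rx$ and $y_0+rx$ lie in $B_{r+\rho}(x_0)\subset U$, so
\begin{equation*}
|u_{y_0,r}(x)-u_{x_0,r}(x)|=\tfrac{1}{r}\,|u(y_0+rx)-u(x_0+rx)|\leq \tfrac{L|y_0-x_0|}{r}\leq \tfrac{L\rho}{r}.
\end{equation*}
Choosing
\begin{equation*}
\rho:=\min\bigl\{\tfrac{\epsilon\,r}{2L},\ \mathrm{dist}(x_0,\partial U)-r\bigr\}
\end{equation*}
and applying the triangle inequality together with Step 1 yields $\Vert u_{y_0,r}-H_{\alpha,\bm{e}}\Vert_{L^\infty(B_1)}\leq \frac{\epsilon}{2}+\frac{\epsilon}{2}=\epsilon$ for every $y_0\in B_\rho(x_0)$, which is exactly (\ref{eq1}).

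There is no substantive obstacle: the heavy lifting has already been done in Proposition \ref{prop2}(2) (the local Lipschitz bound, which controls how $u_{y_0,r}$ changes when the center shifts by $\rho\ll r$) and in Proposition \ref{prop4} (identifying the limit $u_0$ as a two-plane function $H_{\alpha,\bm{e}}$). The only minor pitfall to bear in mind is that the two-plane function in (\ref{bu}) carries the fixed factor $x_2^0$ (the ordinate of the original center $x_0$), so in Step 2 we must compare every $u_{y_0,r}$ with this \emph{same} $H_{\alpha,\bm{e}}$ rather than a $y_0$-dependent variant; the Lipschitz estimate above does exactly this.
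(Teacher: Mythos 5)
Your proposal is correct and follows essentially the same two-step argument as the paper's proof: first select $r$ via blow-up compactness and the classification of Proposition \ref{prop4} so that $\Vert u_{x_0,r}-H_{\alpha,\bm{e}}\Vert_{L^\infty(B_1)}\leq\epsilon/2$, then use the Lipschitz bound $\Vert u_{x_0,r}-u_{y_0,r}\Vert_{L^\infty(B_1)}\leq L|x_0-y_0|/r$ and choose $\rho$ with $L\rho/r\leq\epsilon/2$. Your added remark that the same fixed $H_{\alpha,\bm{e}}$ (with the factor $x_2^0$) must be used for all centers $y_0$ is a correct and worthwhile clarification, but the substance of the argument is identical.
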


\begin{proof}
	Thanks to Proposition \ref{prop2}, up to extracting a subsequence, for any given $\epsilon >0$ there exists an $r>0$ such that
	\begin{equation*}  
	\Vert u_{x_0,r} - H_{\alpha,\bm{e}} \Vert_{L^\infty(B_1)} \leq \epsilon/2.
	\end{equation*}
	On the other hand, by Proposition \ref{prop2} the Lipschitz regularity implies that
	\begin{equation*}
	\Vert u_{x_0,r} - u_{y_0,r} \Vert_{L^\infty(B_1)} \leq \frac{L}{r}\vert x_0-y_0 \vert,
	\end{equation*}
	where $L>0$ is the Lipschitz constant. Hence we can get \eqref{eq1} if we choose $\rho$ small enough satisfying $\frac{L\rho}{r} \leq \epsilon/2$.
\end{proof}

\begin{rem}
	If $\lambda_+>0$ and $\lambda_-=0$ at $x_0\in\Gamma_{\text{tp}}$, then the blow-up limit at $x_0$ writes $u_0(x)=x_2^0\alpha(x\cdot \bm{e})^+ - x_2^0\beta(x\cdot \bm{e})^-$ for $\alpha>0$ and $\beta=0$. See Figure 9 for a possible case.
	\begin{figure}[!h]
		\includegraphics[width=105mm]{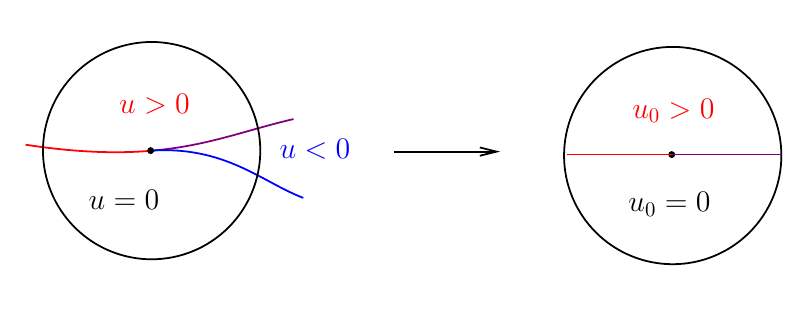}
		\caption{}
	\end{figure}
\end{rem}

\subsection{Optimal conditions at the free boundaries}
In this part we will give the definition of the viscosity boundary conditions and establish a connection between minimizers of $J_{\text{a,tp}}$ and viscosity solutions. It is standard to infer that the local minimizer $u$ satisfies the equation inside the fluid in weak sense, and thus in viscosity sense. So we mainly focus on the viscosity boundary condition.

We do not expect to get the $C^1$ regularity of the minimizer $u$ on the free boundary, and we will use the optimal condition in viscosity sense to describe the behaviour of $u$. The concept of viscosity solutions in free boundary problems was first addressed by Caffarelli in \cite{C88}, and we borrows some definitions for two-phase free boundary problems from \cite{PSV21}. We first give the concept of touch functions and comparison functions.

\begin{define}
	Let $D$ be an open set and $Q(x)$, $w(x)$ be two functions on $D$.
	
	(1) We say a function $Q(x)$ touching $w(x)$ from below (or above) at $x_0=(x_1^0,x_2^0)\in D$ if $Q(x_0)=w(x_0)$ and $Q(x)-w(x)\leq 0$ (or $Q(x)-w(x)\geq 0$) for every $x$ in a neighborhood of $x_0$. We say $Q(x)$ touching $w(x)$ strictly from below (or above) if the inequality is strict for $x\neq x_0$.
	
	(2) We say that $Q(x)$ is a comparison function in $D$ if
	
	(2a) $Q(x)\in C^1(\overline{\{Q(x)>0\}}\cap D)\cap C^1(\overline{\{Q(x)<0\}}\cap D)$;
	
	(2b) $Q(x) \in C^2(\{Q(x)>0\}\cap D)\cap C^2(\{Q(x)>0\}\cap D)$;
	
	(2c) $\partial\{Q(x)>0\}$ and $\partial\{Q(x)<0\}$ are smooth manifolds in $D$.
\end{define}

\begin{define}(Viscosity boundary conditions)
	We say that $u$ satisfies the viscosity boundary conditions of (\ref{bc}) on the free boundaries if the following holds.
	
	(A) Suppose that $Q(x)$ is a comparison function touching $u$ from below at $x_0=(x_1^0,x_2^0)\in\partial\Omega_{u}^{\pm}$.
	
	\quad\quad (A.1) If $x_0\in\Gamma_{\rm{op}}^+$, then 
	\[|\nabla Q^+(x_0)|\leq x_2^0\lambda_+;\]
	
	\quad\quad (A.2) If $x_0\in\Gamma_{\rm{op}}^-$, then $Q^+(x)\equiv 0$ in a neighborhood of $x_0$ and 
	\[|\nabla Q^-(x_0)|\geq x_2^0\lambda_-;\]
	
	\quad\quad (A.3) If $x_0\in\Gamma_{\rm{tp}}$, then
	\[|\nabla Q^-(x_0)|\geq x_2^0\lambda_-\] 
	and
	\[|\nabla Q^+(x_0)|^2-|\nabla Q^-(x_0)|^2\leq (x_2^0)^2(\lambda_+^2-\lambda_-^2)^2.\]
	
	(B) Suppose that $Q(x)$ is a comparison function touching $u$ from above at $x_0=(x_1^0,x_2^0)\in\partial\Omega_{u}^{\pm}$.
	
	\quad\quad (B.1) If $x_0\in\Gamma_{\rm{op}}^+$, then $Q^-(x)\equiv 0$ in a neighborhood of $x_0$ and
	\[|\nabla Q^+(x_0)|\geq x_2^0\lambda_+;\]
	
	\quad\quad (B.2) If $x_0\in\Gamma_{\rm{op}}^-$, then
	\[|\nabla Q^-(x_0)|\leq x_2^0\lambda_-;\]
	
	\quad\quad (B.3) If $x_0\in\Gamma_{\rm{tp}}$, then
	\[|\nabla Q^+(x_0)|\geq x_2^0\lambda_+\]
	and
	\[|\nabla Q^+(x_0)|^2-|\nabla Q^-(x_0)|^2\geq (x_2^0)^2(\lambda_+^2-\lambda_-^2)^2.\]
\end{define}

Notice that the boundary conditions are optimal. For instance, the right side of the inequality in case (A.1) cannot be smaller than $x_2^0\lambda_+$.

Before closing this subsection, we set the connection between local minimizers and viscosity solutions.

\begin{lem} \label{vis}
	(The local minimizers are viscosity solutions)
	Let $u$ be a local minimizer of $J_{\rm a,tp}$ in any compact set $D'\Subset D$, which means that $u$ satisfies
	\begin{equation}
	\mathcal{L}u=\Delta u-\frac{1}{x_2}\partial_2 u=0
	\end{equation}
	for $x\in\Omega_{u}^{\pm}\cap D'$ and
	\begin{equation}
	\begin{cases}
	|\nabla u^+|^2-|\nabla u^-|^2=(x_2)^2(\lambda_+^2-\lambda_-^2)  \quad\ \ \text{on} \quad \Gamma_{\rm{tp}}\cap D', \\
	|\nabla u^{\pm}| = x_2\lambda_{\pm} \qquad\qquad\qquad\qquad\qquad \text{on} \quad \Gamma_{\rm{op}}^{\pm}\cap D', \\
	|\nabla u^{\pm}|\geq x_2\lambda_{\pm} \qquad\qquad\qquad\qquad\qquad \text{on} \quad \Gamma_{\rm{tp}}\cap D'.
	\end{cases}
	\end{equation} 
	
	Then, $u$ satisfies the optimal viscosity boundary conditions on $\partial\Omega_{u}^{\pm}\cap D'$.
	
\end{lem}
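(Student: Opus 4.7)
The plan is to reduce each of the six viscosity conditions to a comparison of two piecewise-linear blow-ups, relying on the classification of Proposition \ref{prop4} and the touching lemma from Appendix E. Fix a free boundary point $x_0=(x_1^0,x_2^0)$ and a comparison function $Q$ touching $u$ at $x_0$, with $Q(x_0)=u(x_0)=0$. I would rescale in parallel: $u_r(x):=u(x_0+rx)/r$ and $Q_r(x):=Q(x_0+rx)/r$. By Proposition \ref{prop4}, along a subsequence $u_r\to u_0$ where $u_0$ takes one of the three forms in (\ref{bu}). Because $Q$ is $C^1$ up to the free boundary from each phase and vanishes at $x_0$, $Q_r$ converges uniformly on compact sets to the piecewise-linear function
\[
Q_0(x) = a(x\cdot\nu)^+ - b(x\cdot\nu)^-,
\]
with $a=|\nabla Q^+(x_0)|$, $b=|\nabla Q^-(x_0)|$, and $\nu$ the inward unit normal to $\{Q>0\}$ at $x_0$; when one phase of $Q$ is empty near $x_0$ the corresponding term drops out. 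The sign-preserving touching condition passes to the limit, giving $Q_0\le u_0$ (resp.\ $Q_0\ge u_0$) on all of $\mathbb{R}^2$.

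I would then argue each of the six cases. In case (A.3), with $x_0\in\Gamma_{\text{tp}}$ and $Q$ touching from below, the inclusion $\{Q_0>0\}\subseteq\{u_0>0\}$ together with the analogous inclusion for the negative phases --- both being open half-planes through the origin --- forces $\nu=\bm{e}$, which is exactly where the touching lemma of Appendix E is invoked. Restricting $Q_0\le H_{\alpha,\bm{e}}$ to each half-plane then yields $a\le x_2^0\alpha$ and $b\ge x_2^0\beta$. Combined with $\beta\ge\lambda_-$ from Proposition \ref{prop4}, the second inequality gives $|\nabla Q^-(x_0)|\ge x_2^0\lambda_-$, while
\[
|\nabla Q^+(x_0)|^2 - |\nabla Q^-(x_0)|^2 \;=\; a^2-b^2 \;\le\; (x_2^0)^2(\alpha^2-\beta^2) \;=\; (x_2^0)^2(\lambda_+^2-\lambda_-^2)
\]
recovers the jump inequality via the identity $\alpha^2-\beta^2=\lambda_+^2-\lambda_-^2$. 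Case (B.3) is symmetric, while (A.1), (A.2), (B.1), (B.2) are simpler because $u_0$ is a single half-plane: the same set-inclusion argument pins $\nu$, comparison of coefficients on the common half-plane gives the desired inequality, and the assertion in (A.2) and (B.1) that one phase of $Q$ vanishes near $x_0$ follows from applying the touching lemma of Appendix E on the trivial side of $u_0$.

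The main technical point to control is the weight $x_2^0$ appearing in every boundary condition. Proposition \ref{prop1} provides the uniform lower bound $x_2^0\ge b>0$, so $\mathcal{L}$ is uniformly elliptic in any fixed ball around $x_0$ and the blow-up arguments from \cite{PSV21,SFS14} apply without modification. The factor $x_2^0$ enters only through the coefficients of the half-plane limit in Proposition \ref{prop4} and passes linearly through the rescaling, since $Q_0$ and $u_0$ are both one-homogeneous. The delicate step I expect is the alignment of normals in case (A.3): when both $Q_0$ and $u_0$ genuinely change sign across the origin, the two half-planes could a priori be different, and ruling this out by the touching lemma of Appendix E --- rather than by a naive set-inclusion argument --- is the main obstacle distinguishing the two-phase case from the one-phase ones.
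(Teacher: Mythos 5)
Your proposal is correct and follows essentially the same route as the paper's proof: blow up $u$ and the comparison function $Q$ in parallel at $x_0$, identify the limits as the two-plane functions $H_{\alpha,\bm{e}}$ and $H_Q(x)=|\nabla Q^+(x_0)|(x\cdot\bm{\nu})^+-|\nabla Q^-(x_0)|(x\cdot\bm{\nu})^-$, pass the touching to the limit, and read off the normal alignment and the coefficient inequalities, closing with $\alpha^2-\beta^2=\lambda_+^2-\lambda_-^2$ and $\alpha\geq\lambda_+$, $\beta\geq\lambda_-$ from Proposition \ref{prop4}. One small correction: the alignment $\bm{\nu}=\bm{e}$ is \emph{not} what Lemma \ref{touch} of Appendix E provides (that lemma builds comparison functions for the linearizing sequence $v_k$ in Section 3); it follows from exactly the elementary ordering of the positivity and negativity sets of the two two-plane functions that you dismiss as the naive alternative (valid here since $\alpha,\beta>0$), and similarly $Q^+\equiv0$ near $x_0$ in case (A.2) follows directly from $Q\leq u\leq0$ in a neighborhood of a point of $\Gamma_{\rm op}^-$ rather than from Appendix E.
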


\begin{proof}
	For one-phase points, the proof follows by \cite{V23}. For two-phase points in two dimensions, it follows by \cite{PSV21}. We only sketch the proof for two-phase points here in axisymmetric case.
	
	In the case $x_0\in\Gamma_{\text{tp}}$, suppose that $Q$ is a comparison function touching $u$ from below at $x_0$. Then up to a subsequence assume $u_{x_0,r_k}\rightarrow H_{\alpha,\bm{e}}$ uniformly for $H_{\alpha,\bm{e}}=x_2^0\alpha(x\cdot\bm{e})^+-x_2^0\beta(x\cdot\bm{e})^-$. On the other hand, $Q^{\pm}$ is differentiable at $x_0$ respectively in $\overline{\{Q>0\}}$ and $\overline{\{Q<0\}}$, and we get
	$$Q_{x_0,r_k}=\frac{Q(x_0+r_kx)}{r_k}\leq u_{x_0,r_k}$$
	and the blow-up limit
	$$H_Q(x)=|\nabla Q^+(x_0)|(x\cdot\bm{n})^+-|\nabla Q^-(x_0)|(x\cdot\bm{n})^-,$$
	where $\bm{n}=|\nabla Q(x_0)|^{-1}\nabla Q^+(x_0) = |\nabla Q(x_0)|^{-1}\nabla Q^-(x_0)$. Now since $H_Q$ touches $H_{\alpha,\bm{e}}$ from below, we have $\bm{n}=\bm{e}$ and
	$$|\nabla Q^+(x_0)|^2-|\nabla Q^-(x_0)|^2\leq(x_2^0)^2(\alpha^2-\beta^2),$$
	$$|\nabla Q^+(x_0)|\leq x_2^0\alpha, \quad |\nabla Q^-(x_0)|\geq x_2^0\beta,$$
	which lead to (A.3). The remainings are analogous.
\end{proof}

For future benefit, let us state the optimal viscosity boundary conditions in another way.

\begin{rem}
	Let $u$ be a local minimizer of $J_{\rm a,tp}$ in any compact set $D'\Subset D$. Then $u$ satisfies the following optimal boundary conditions.
	
	(1) Suppose that $Q$ is a comparison function touching $u$ from below at $x_0\in\Gamma_{\rm{op}}^+$(resp. $-u^-$ from above at $x_0\in\Gamma_{\rm{op}}^-$), then
	\[|\nabla Q^+(x_0)|\leq x_2^0\lambda_+ \quad\quad\quad (\text{or} \ |\nabla Q^-(x_0)|\leq\lambda^-).\]
	
	(2) Suppose that $Q$ is a comparison function touching $u$ from above at $x_0\in\partial\Omega_u^+$(resp. $-u^-$ from below at $x_0\in\partial\Omega_u^-$), then
	\[|\nabla Q^+(x_0)|\geq x_2^0\lambda_+ \quad\quad\quad (\text{or} \ |\nabla Q^-(x_0)|\geq\lambda^-).\]
\end{rem}

In the following parts we will consider the blow-up sequence $u_k$ at $x_0=(x_1^0,x_2^0)$, which locally satisfies
\begin{equation}
\mathcal{L}_ku_k=\Delta u_k-\frac{r_k}{x_2^0+r_kx_2}\partial_2 u_k=0 \quad \text{in} \quad \Omega_{u_k}^{\pm}\cap B_1,
\end{equation}
and
\begin{equation}
\begin{cases}
|\nabla u_k^+|^2-|\nabla u_k^-|^2=(x_2^0+r_kx_2)^2(\lambda_+^2-\lambda_-^2) \quad\ \quad \text{on} \quad \Gamma_{\text{tp}}\cap B_1, \\
|\nabla u_k^{\pm}| = (x_2^0+r_kx_2)\lambda_{\pm} \qquad\qquad\qquad\qquad\qquad \text{on} \quad \Gamma_{\text{op}}^{\pm}\cap B_1, \\
|\nabla u_k^{\pm}|\geq (x_2^0+r_kx_2)\lambda_{\pm} \qquad\qquad\qquad\qquad\qquad \text{on} \quad \Gamma_{\text{tp}}\cap B_1,
\end{cases}
\end{equation}
and the optimal boundary conditions for viscosity solution will change accordingly.

\section{Improvement of flatness}

The main underlying idea of reaching the $C^{1,\alpha}$ regularity of the free boundary is to "improve the flatness" of the blow-up in a smaller scale. See \cite{S11} for one-phase problem with distributed sources, and \cite{PSV21}\cite{SFS14} for two-phase problem with Laplacian operator.

We consider only those free boundary points on $\Gamma_{\text{tp}}$. This section is structured as follows. In the first part we construct a linearizing sequence related to the blow-up sequence and get its compactness by partial boundary Harnack's inequality. In the second part we describe the formulation of the linearized problem. In the last part we present the proof of the "flatness decay", setting up an iterative improvement of flatness argument in a neighborhood of $x_0$. Notice that the blow-up point $x_0$ matters, since the limiting problem of the linearizing sequence is different for branch point and non-branch points. All the proofs are distinguished into two cases.

Suppose that $u$ is a local minimizer of $J_{\text{a,tp}}$ in $B\Subset D$. Consider the blow-up sequence $u_k$ at $y_0=(y_1^0,y_2^0)\in\Gamma_{\text{tp}}$. Then $u_k$ minimizes
\begin{equation*}
J_{\text{a,tp},k}(u_k)=\int_{B_k}\left[\frac{|\nabla u_k|^2}{y_2^0+r_kx_2}+(y_2^0+r_kx_2)(\lambda_+^2I_{\{u_k>0\}}+\lambda_-^2I_{\{u_k<0\}})\right]dX,
\end{equation*}
for $B_k\subset \{x\in\mathbb{R}^2 \ | \ y_0+r_kx\in B\}$, and solves
\begin{equation}
\mathcal{L}_ku_k=\Delta u_k-\frac{r_k}{y_2^0+r_kx_2}\partial_2u_k=0 \quad \text{in} \quad \{u_k\neq0\}\cap B_k.
\end{equation}
Thanks to Proposition \ref{prop4}, there exists $H_{\alpha_k,\bm{e}}$, defined as in (\ref{bu}), that
\begin{equation}
\frac{\epsilon_k}{2}:=\Vert u_k-H_{\alpha_k,\bm{e}}\Vert_{L^\infty(B_1)}\rightarrow 0 \quad \text{for} \quad \lambda_+\leq\alpha_k\leq L,
\end{equation}
where $L$ is the uniform Lipschitz constant for $u_k$ and $\bm{e}=(e_1,e_2)$. Without loss of generality assume $e_2\geq0$.

\begin{rem}
	Here we cannot take $\bm{e}=(0,1)$ for simplicity, since the operator $\mathcal{L}_k$ will also change under the coordinate rotation.
\end{rem}

For the sake of subsequent proof we attempt to extract a subsequence $u_{r_{k'}}$, still noted as $u_k$, such that the blow-up radius $r_{k'}$ satisfies
\begin{equation} \label{rk}
r_{k'}=O(\epsilon_k^2).
\end{equation}
In fact, for such $\epsilon_k$, there is a positive number $\tilde{r}_k$ depending on $r_k$ such that for any $r_{k'},s_k<\tilde{r}_k$, the Cauchy sequence $u_{r_{k'}}$ satisfies
\begin{equation*}
\Vert u_{r_{k'}}-u_{s_k}\Vert_{L^\infty(B_1)}\leq\epsilon_k/2.
\end{equation*}
Then for any $r_{k'}<\min\{\tilde{r}_k, \epsilon_k^2\}$ and $s_k<\tilde{r}_k$,
\begin{equation}  \label{uk}
\Vert u_{r_{k'}}-H_{\alpha_k,\bm{e}}\Vert_{L^\infty(B_1)}\leq\Vert u_{r_{k'}}-u_{s_k} \Vert_{L^\infty(B_1)}+\Vert u_{s_k}-H_{\alpha_k,\bm{e}}  \Vert_{L^\infty(B_1)}\leq\epsilon_k\rightarrow 0
\end{equation}
and we get the desired order of the blow-up radius.

Set the linearizing sequence
\begin{equation}
v_k(x)=
\begin{cases}  \label{vk}
v_{+,k}(x)=\cfrac{u_k(x)-y_2^0\alpha_k(x\cdot\bm{e})^+}{y_2^0\alpha_k\epsilon_k}, \quad x\in\Omega_{u_k}^+\cap B_1,\\
\\
v_{-,k}(x)=\cfrac{u_k(x)+y_2^0\beta_k(x\cdot\bm{e})^-}{y_2^0\beta_k\epsilon_k}, \quad x\in\Omega_{u_k}^-\cap B_1,
\end{cases}
\end{equation}
and let
\begin{equation} \label{l}
l:=\lambda_+^2\lim_{k\rightarrow\infty}\frac{\alpha_k^2-\lambda_+^2}{2\alpha_k^2\epsilon_k}=\lambda_-^2\lim_{k\rightarrow\infty}\frac{\beta_k^2-\lambda_-^2}{2\beta_k^2\epsilon_k}.
\end{equation}
We have that $0\leq l<\infty$ for branch points and $l=\infty$ for non-branch points.

\begin{rem}
	If $e_2<0$, then we set
	\begin{equation*}
	v_k(x)=
	\begin{cases}
	v_{+,k}(x)=\cfrac{u_k(x)+y_2^0\alpha_k(x\cdot\bm{e})^-}{y_2^0\alpha_k\epsilon_k}, \quad x\in\Omega_{u_k}^+\cap B_1,\\
	\\
	v_{-,k}(x)=\cfrac{u_k(x)-y_2^0\beta_k(x\cdot\bm{e})^+}{y_2^0\beta_k\epsilon_k}, \quad x\in\Omega_{u_k}^-\cap B_1.
	\end{cases}
	\end{equation*}
	The argument will be quite similar with the case $e_2\geq0$.
\end{rem}

Now we distinguish the two cases by the value of $l$. The proofs for compactness of $v_k$ is divided into two cases as well. The value of $l$ determines the type of the limiting problem. The free boundary of $u_{x_0,0}$ at a branch point $x_0$ contains both one-phase part and two-phase part in $B_1$ ($0\leq l<\infty$), while it contains only two-phase part in $B_1$ ($l=\infty$) of the free boundary of $u_{x_0,0}$ at an interior two-phase point $x_0$.

\subsection{The case for branch points}
In this case we assume $0\leq l<\infty$. Notice that as stated in the elegant work \cite{PSV21},

\emph{" In order to get the compactness of the linearizing sequence, the partial improvement of flatness is not needed just at two-phase point $x_0$, but in all the points in a neighborhood of $x_0$."}

Our main differences here are the elliptic operator and the free boundary conditions, which bring some complicated calculus but cause not too much essential difficulties.

\subsubsection{Compactness}

We will get the compactness of $v_{\pm,k}$, and the trick of the proof is to establish partial boundary Harnack's inequality. We give the convergence theorem first, which holds for both $0\leq l<\infty$ and $l=\infty$.

\begin{prop} \label{comp}
	(Compactness of the linearizing sequence $v_k$)
	For a blow-up sequence $u_k$ and $v_k, \alpha_k, \epsilon_k$ defined as above, there are H\"older continuous functions
	\begin{equation*}
	v_{\pm}:\overline{B_{1/2}\cap\{(x\cdot\bm{e})^{\pm}>0\}} \rightarrow \mathbb{R}
	\end{equation*}
	such that the sequence of graphs
	\begin{equation*}
	\Gamma_k^{\pm}:=\left\{ (x,v_{\pm,k}(x)) \ | \ x\in\overline{\Omega_{u_k}^{\pm}\cap B_{1/2}} \right\}
	\end{equation*}
	converge in the Hausdorff distance to
	\begin{equation*}
	\Gamma_{\pm}:=\left\{ (x,v_{\pm}(x)) \ | \ x\in\overline{\{(x\cdot\bm{e})^{\pm}>0\}\cap B_{1/2}} \right\}
	\end{equation*}
	up to a subsequence.
	
	Furthermore, $v_{\pm}$ have the following properties:
	
	(1) Uniform convergence: $v_{\pm,k} \rightarrow v_{\pm}$ uniformly on $B_{1/2}\cap\{(x\cdot\bm{e})^{\pm}>\delta\}$ for any $\delta>0$.
	
	(2) Pointwise convergence: $v_{\pm}(x)=\lim_{k\rightarrow\infty}v_{\pm,k}(x_k)$ for every sequence $x_k\rightarrow x$, where $x_k\in\overline{\Omega_{u_k}^{\pm}\cap B_1}$ and $x\in \overline{\{y\in B_{1/2}|(y\cdot\bm{e})^{\pm}>0\}}$. In particular, for $x\in\{y\in B_{1/2}|(y\cdot\bm{e})^{\pm}>0\}$, $v_{\pm}(x)=\mp\lim_{k\rightarrow\infty}\frac{(x_k\cdot\bm{e})^{\pm}}{\epsilon_k}$ for $x_k\in\partial\Omega_{u_k}^{\pm}$ and $x_k\rightarrow x$.
	
\end{prop}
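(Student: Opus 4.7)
The plan is to upgrade the $L^\infty$ closeness $\Vert u_k-H_{\alpha_k,\bm{e}}\Vert_{L^\infty(B_1)}\leq\epsilon_k$ to a uniform $C^{0,\gamma}$ modulus of continuity for $\{v_{\pm,k}\}$ on $\overline{B_{1/2}\cap\{(x\cdot\bm{e})^{\pm}\geq 0\}}$ independent of $k$, after which Ascoli--Arzelà and a Hausdorff-distance argument deliver the desired subsequence. Away from the flat interface $\{x\cdot\bm{e}=0\}$ the argument is quick: starting from $\mathcal{L}_k u_k=0$ in $\Omega_{u_k}^{\pm}\cap B_1$ and substituting the definition \eqref{vk}, the linearizing function $v_{\pm,k}$ satisfies an equation of the form $\mathcal{L}_k v_{\pm,k}=f_k$ with $\Vert f_k\Vert_{L^\infty}=O(r_k/\epsilon_k)=O(\epsilon_k)$ thanks to \eqref{rk}. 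By Proposition \ref{prop1} we have $y_2^0\geq b$, so $\mathcal{L}_k$ is uniformly elliptic, and interior Schauder estimates give uniform $C^{1,\alpha}$ control of $v_{\pm,k}$ on every strip $B_{1/2}\cap\{(x\cdot\bm{e})^{\pm}\geq\delta\}$, which already yields claim (1).

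The decisive ingredient is a partial boundary Harnack inequality for $v_{\pm,k}$: there exist universal $c_0\in(0,1)$ and $\eta_0>0$ such that for $k$ sufficiently large,
\begin{equation*}
\mathop{\mathrm{osc}}_{B_1\cap\Omega_{u_k}^{\pm}} v_{\pm,k}\leq\eta_0 \quad\Longrightarrow\quad \mathop{\mathrm{osc}}_{B_{1/2}\cap\Omega_{u_k}^{\pm}} v_{\pm,k}\leq(1-c_0)\eta_0.
\end{equation*}
I would prove this by sliding the family of two-plane comparison functions
\begin{equation*}
H_k^{t}(x)=y_2^0\alpha_k\bigl(x\cdot\bm{e}+\epsilon_k t\,\phi(x)\bigr)^{+}-y_2^0\beta_k\bigl(x\cdot\bm{e}+\epsilon_k t\,\phi(x)\bigr)^{-},
\end{equation*}
where $\phi$ is a smooth cut-off satisfying $\phi\equiv 1$ on $B_{1/2}$ and $\phi\equiv 0$ outside $B_{3/4}$, against $u_k$ and decreasing $t$ until first contact. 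Interior contact is excluded by the elliptic equation via the preparing lemma of Appendix D, while contact on $\Gamma_{\text{tp}}\cup\Gamma_{\text{op}}^{\pm}$ is excluded through the viscosity boundary conditions of Lemma \ref{vis}, once one checks that the slopes of $H_k^{t}$ match $(y_2^0+r_kx_2)\lambda_{\pm}$ up to $O(\epsilon_k)$ and one localizes the analysis via the touching lemma of Appendix E. Iterating this gain on dyadic balls converts the partial Harnack into a uniform $C^{0,\gamma}$ bound for $v_{\pm,k}$ up to $\{x\cdot\bm{e}=0\}\cap B_{1/2}$.

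Equipped with this uniform modulus, Ascoli--Arzelà extracts a subsequence converging to a function $v_{\pm}\in C^{0,\gamma}$, and the Hausdorff convergence $\Gamma_k^{\pm}\to\Gamma_{\pm}$ follows from the uniform convergence of $v_{\pm,k}$ together with the Hausdorff convergence of $\partial\Omega_{u_k}^{\pm}$ to $\{x\cdot\bm{e}=0\}\cap B_{1/2}$ (itself a consequence of Proposition \ref{prop2}(1) and \eqref{uk}). For the pointwise statement (2) at $x\in\{y\cdot\bm{e}>0\}\cap B_{1/2}$, any $x_k\in\partial\Omega_{u_k}^{+}$ with $x_k\to x$ satisfies $u_k(x_k)=0$, so $v_{+,k}(x_k)=-(x_k\cdot\bm{e})/\epsilon_k$, and the limit equals $v_{+}(x)$ by graph convergence. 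The main obstacle is the partial boundary Harnack step: in contrast with \cite{SFS14,PSV21}, the drift term in $\mathcal{L}_k$ and the $x_2$-dependent Bernoulli slopes $(y_2^0+r_kx_2)\lambda_{\pm}$ force the barriers to absorb both $O(r_k)$ and $O(r_k/\epsilon_k)$ errors; this is precisely why the scaling $r_k=O(\epsilon_k^2)$ is imposed in \eqref{rk} and why the $y_2^0\alpha_k$, $y_2^0\beta_k$ normalizations appear in the definition \eqref{vk} of the linearizing sequence. The same argument goes through unchanged in the non-branch regime $l=\infty$.
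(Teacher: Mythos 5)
Your overall architecture — partial boundary Harnack, dyadic iteration to a uniform modulus, Ascoli--Arzel\`a, and graph (Hausdorff) convergence — matches the paper's proof, and your remarks on the interior Schauder estimates, on the role of $r_k=O(\epsilon_k^2)$, and on the pointwise statement (2) are all fine. The gap is in the central step, the construction of the sliding barriers. First, the cut-off you choose ($\phi\equiv1$ on $B_{1/2}$, $\phi\equiv0$ outside $B_{3/4}$) cannot work: on the transition annulus $\Delta\phi$ changes sign, so $H_k^t$ is not a strict $\mathcal{L}_k$-subsolution and the first contact point need not lie on the free boundary; and on $B_{1/2}$, where the contact is supposed to be forced, $\nabla\phi=0$, so $|\nabla H_k^{\bar t}|$ at a free boundary contact point equals $y_2^0\alpha_k$ up to $O(\epsilon_k^2)$ with no strict perturbation, and the viscosity conditions yield no contradiction. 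The paper instead uses the explicit radial barrier $\phi=\kappa(|x-Q|^{-2}-(3/4)^{-2})$ of (\ref{phi}), which is strictly $\mathcal{L}_k$-subharmonic and has $\partial_e\phi>0$ near $\{x\cdot\bm{e}=0\}$, \emph{combined with} a dilation factor $(1\pm\tau\epsilon/2)$ obtained from the dichotomy of Appendix D (comparing $u_k(P)$ at $P=2\bm{e}$ with the upper and lower planes); it is this dilation, not the sliding alone, that produces the order-$\epsilon$ strict violation of the Bernoulli condition at the contact point.

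Second, in the branch regime $0\leq l<\infty$ a single two-plane barrier whose positive and negative parts slide together cannot give the needed gain, because $\partial\Omega_{u_k}^+$ and $\partial\Omega_{u_k}^-$ need not coincide and must be improved separately: the paper's Lemma \ref{pbh1} traps $u_k^+$ and $u_k^-$ between four translates $b_0\leq d_0\leq c_0\leq a_0$ and uses barriers in which only one nullset moves (e.g. $f_t=y_2^0\lambda_+(1+\tau\epsilon/2)(x\cdot\bm{e}+b_0+\eta t\epsilon\phi)^+-y_2^0\lambda_-(x\cdot\bm{e}+b_0)^-$). Moreover the slopes of the barriers in this regime must be $\lambda_\pm$, not $\alpha_k,\beta_k$ as in your $H_k^t$ — this is exactly why the paper first replaces $H_{\alpha_k,\bm{e}}$ by $H_{\lambda_+,\bm{e}}$ using $\alpha_k-\lambda_+=O(\epsilon_k)$ — since contact can occur at \emph{one-phase} points of $\partial\Omega_{u_k}^+$, where the viscosity condition compares against $(y_2^0+r_k\bar x_2)\lambda_+$ and a barrier of slope $y_2^0\alpha_k\geq y_2^0\lambda_+$ gives an inequality of ambiguous sign. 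Your closing claim that "the same argument goes through unchanged" for $l=\infty$ has it backwards: the rigid two-plane translation is the correct device precisely in the non-branch case (Lemma \ref{pbh2}), where all contact points are two-phase points and $\alpha_k,\beta_k$ must be kept, whereas the branch case requires the decoupled, $\lambda_\pm$-normalized barriers. Finally, note that the dyadic iteration only yields the H\"older bound for $|x-y|\gtrsim\epsilon_k/\bar\epsilon_k$, which is why the conclusion is phrased as Hausdorff convergence of graphs rather than genuine equicontinuity of $v_{\pm,k}$ up to the interface.
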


As a direct consequence we have the following corollary. Here we follow the notations in \cite{PSV21}. Let
$$\mathcal{J}=\{v_+<v_-\}\cap\{x\cdot\bm{e}=0\}\cap B_{1/2}$$
and
$$\mathcal{C}=\{v_+=v_-\}\cap\{x\cdot\bm{e}=0\}\cap B_{1/2}.$$

\begin{cor} \label{cor1}
	The limit functions $v_{\pm}$ in Proposition \ref{comp} satisfy $v_+\leq v_-$ on $\{x\cdot\bm{e}\}\cap B_{1/2}$, and the set $\{x\cdot\bm{e}=0\}\cap B_{1/2}=\mathcal{J}\cup\mathcal{C}$. Moreover, if $x\in\mathcal{J}$, then $x$ has a uniform distance to the two-phase points of $u_k$. That is,
	\begin{equation}
	\liminf_{k\rightarrow\infty}dist\left(x,\partial\Omega_{u_k}^+\cap\partial\Omega_{u_k}^-\right)>0.
	\end{equation}
	If $x\in\mathcal{C}$, then there is a sequence $x_k\in\partial\Omega_{u_k}^+\cap\partial\Omega_{u_k}^-$ such that
	\begin{equation*}
	x_k\rightarrow x.
	\end{equation*}
\end{cor}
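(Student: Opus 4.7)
The plan is to read Corollary 3.2 off from Proposition 3.1 by evaluating the limit functions $v_\pm$ along the hyperplane $\{x\cdot\bm{e}=0\}$, using systematically that at every free boundary point $z\in\partial\Omega_{u_k}^\pm$ one has $u_k(z)=0$, hence the explicit boundary identities $v_{+,k}(z)=-(z\cdot\bm{e})^+/\epsilon_k$ on $\partial\Omega_{u_k}^+$ and $v_{-,k}(z)=(z\cdot\bm{e})^-/\epsilon_k$ on $\partial\Omega_{u_k}^-$. Two global facts do the work: the flatness $\|u_k-H_{\alpha_k,\bm{e}}\|_{L^\infty(B_1)}\le\epsilon_k$ confines both free boundaries to an $O(\epsilon_k)$-strip about $\{x\cdot\bm{e}=0\}$, and the disjointness $\Omega_{u_k}^+\cap\Omega_{u_k}^-=\emptyset$ forces $\partial\Omega_{u_k}^+$ to lie above $\partial\Omega_{u_k}^-$ in the $\bm{e}$-direction.

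For $v_+\le v_-$, fix $x$ on the hyperplane and pick $y_k^\pm\in\partial\Omega_{u_k}^\pm$ on the $\bm{e}$-line through $x$ with $y_k^\pm\to x$. The ordering $y_k^+\cdot\bm{e}\ge y_k^-\cdot\bm{e}$ combined with the boundary identities gives $v_{+,k}(y_k^+)\le 0\le v_{-,k}(y_k^-)$, and Proposition~\ref{comp}(2) passes this to the limit as $v_+(x)\le 0\le v_-(x)$. The decomposition $\{x\cdot\bm{e}=0\}\cap B_{1/2}=\mathcal{J}\cup\mathcal{C}$ is then the tautological trichotomy between strict inequality and equality. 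For the third assertion, suppose by contradiction that $z_k\in\partial\Omega_{u_k}^+\cap\partial\Omega_{u_k}^-$ converges to a point $x\in\mathcal{J}$. At a two-phase point $z_k$ both boundary identities hold at the \emph{same} point, so $v_{+,k}(z_k)$ and $v_{-,k}(z_k)$ are both determined by the single scalar $z_k\cdot\bm{e}$. Comparing these values with the interior values $v_{\pm,k}(z_k^\perp,\pm\delta)$ through sequences approaching $x$ from each open half-space, and using the H\"older continuity of $v_\pm$ on the closed half-spaces supplied by Proposition~\ref{comp}, over-determines $v_\pm(x)$ and collapses $v_+(x)=v_-(x)$, contradicting $x\in\mathcal{J}$.

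For the final assertion, fix $x\in\mathcal{C}$; interpreting $v_\pm$ as the scaled limit heights of $\partial\Omega_{u_k}^\pm$, the condition $v_+(x)=v_-(x)$ means the two free boundaries coincide at $x^\perp$ in the $\epsilon_k$-rescaled limit. To exhibit an actual two-phase sequence $x_k\to x$, I would argue by contradiction: if $B_r(x)\cap(\partial\Omega_{u_k}^+\cap\partial\Omega_{u_k}^-)=\emptyset$ for some $r>0$ and all large $k$, then in $B_r(x)$ the scaled height functions $g_k^\pm$ satisfy $g_k^+>g_k^-$ throughout, while the origin (which is a two-phase point of each $u_k$ because the blow-up is centered at $y_0\in\Gamma_{\mathrm{tp}}$) provides a base TP point at $0^\perp$; connecting $0^\perp$ to $x^\perp$ and combining the continuity of $g_k^+-g_k^-$ with its limit value zero at $x^\perp$ contradicts the strict separation in $B_r(x)$ via a Hausdorff-convergence/compactness argument on the TP sets. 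The main obstacle is the over-determination step in the third assertion: the boundary formulas alone allow $|z_k\cdot\bm{e}|/\epsilon_k$ to stay bounded away from zero, and one really needs the full uniform H\"older control from Proposition~\ref{comp}, together with the fact that $v_+$ and $v_-$ are defined on overlapping closed half-spaces whose intersection is the hyperplane, to interlock the two-sided boundary information at the TP sequence with the one-sided interior information and conclude $v_+(x)=v_-(x)$.
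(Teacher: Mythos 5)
Your handling of the first two assertions matches the paper's: everything is read off from Proposition \ref{comp}(2) together with the boundary identities $v_{+,k}=-\frac{(x\cdot\bm{e})^+}{\epsilon_k}$ on $\partial\Omega_{u_k}^+$ and $v_{-,k}=\frac{(x\cdot\bm{e})^-}{\epsilon_k}$ on $\partial\Omega_{u_k}^-$, which give $v_+\leq 0\leq v_-$ on $\{x\cdot\bm{e}=0\}\cap B_{1/2}$ (the ordering $y_k^+\cdot\bm{e}\geq y_k^-\cdot\bm{e}$ you invoke is not even needed, since each sign is automatic), and the decomposition into $\mathcal{J}\cup\mathcal{C}$ is the tautology. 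This part is correct and is essentially the paper's proof.

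The gap is in the third assertion, and you have located it precisely but not closed it, so the proposal does not prove the statement. Writing $t_k=(x_k\cdot\bm{e})/\epsilon_k$ at a two-phase sequence $x_k\to x$, the boundary identities give $v_{+,k}(x_k)=-t_k^+$ and $v_{-,k}(x_k)=t_k^-$; since one of these vanishes for every $k$, passing to the limit via Proposition \ref{comp}(2) yields only $v_+(x)\leq 0\leq v_-(x)$ together with $v_+(x)\,v_-(x)=0$. Equality $v_+(x)=v_-(x)$ is equivalent to $t_k\to 0$, which is exactly what must be shown and what your "over-determination step" leaves open: nothing in the pointwise identities excludes $t_k\to s>0$, in which case $v_+(x)=-s<0=v_-(x)$ and $x\in\mathcal{J}$ would be a limit of two-phase points. (The paper's own proof disposes of this with the assertion "$v_{+,k}(x_k)=v_{-,k}(x_k)$", which is literally true only when $x_k\cdot\bm{e}=0$; the honest content of that line is the computation you performed, so the missing input $t_k\to 0$ must be extracted from the quantitative two-sided trapping of $u_k^{\pm}$ near two-phase points coming from Lemma \ref{pbh1}/non-degeneracy, not from the identities alone.) Your argument for the last assertion also does not work as stated: the "scaled height functions $g_k^{\pm}$" presuppose that $\partial\Omega_{u_k}^{\pm}$ are graphs over $\{x\cdot\bm{e}=0\}$, which is not known at this stage (they are only confined to an $O(\epsilon_k)$-strip), and even granting them, strict separation $g_k^+>g_k^-$ on $B_r(x)$ is perfectly compatible with $(g_k^+-g_k^-)/\epsilon_k\to 0$ at $x$, so no contradiction is reached.
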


\begin{proof}
	Exploit (2) in Proposition \ref{comp} we can simply get that $v_+\leq v_-$ on $\{x\cdot\bm{e}=0\}\cap B_{1/2}$. Moreover, for $x_k\in\partial\Omega_{u_k}^+\cap\partial\Omega_{u_k}^-$ converging to $x$, it equals $v_{+,k}(x_k)=v_{-,k}(x_k)$ and thus $v_+(x)=v_-(x)$, which gives the conclusion.
\end{proof}

Now we deal with the proof of the compactness. The spirit is mainly borrowed from \cite{PSV21}.

Without loss of generality, suppose that $\alpha_k-\lambda_+>\beta_k-\lambda_-$, and we have that for sufficiently large $k$, 
\begin{equation*}
\begin{aligned}
\Vert u_k-H_{\lambda_+,\bm{e}} \Vert_{L^\infty(B_1)} &\leq \Vert u_k-H_{\alpha_k,\bm{e}} \Vert_{L^\infty(B_1)}+\Vert H_{\alpha_k,\bm{e}}-H_{\lambda_+,\bm{e}} \Vert_{L^\infty(B_1)} \\
&\leq\epsilon_k+\Vert(\alpha_k-\lambda_+)(x\cdot\bm{e})^+\Vert_{L^\infty(B_1)}+\Vert(\beta_k-\lambda_-)(x\cdot\bm{e})^-\Vert_{L^\infty(B_1)} \\
&\leq\epsilon_k+2(\alpha_k-\lambda_+) \\
&=\epsilon_k+O(\epsilon_k) \\
&=:\bar{\epsilon_k}.
\end{aligned}
\end{equation*}
The last equality comes from the fact that $0\leq l<\infty$, where
\begin{equation*}
\begin{aligned}
l &= \lambda_+^2\lim_{k\rightarrow\infty}\frac{\alpha_k^2-\lambda_+^2}{2\alpha_k^2\epsilon_k} \\
&=\lambda_+^2\lim_{k\rightarrow\infty}(\alpha_k-\lambda_+)\left(\frac{1}{2\alpha_k\epsilon_k}+\frac{\lambda_+}{2\alpha_k^2\epsilon_k}\right) \\
&=\lambda_+^2\lim_{k\rightarrow\infty}\frac{1}{\epsilon_k}(\alpha_k-\lambda_+)\left(\frac{1}{2\lambda_+}+\frac{1}{2\lambda_+}\right) \\
&=\lambda_+\lim_{k\rightarrow\infty}\frac{\alpha_k-\lambda_+}{\epsilon_k}
\end{aligned}
\end{equation*}
and hence $\alpha_k-\lambda_+=O(\epsilon_k)$.

This implies
$$y_2^0\lambda_+\left( x\cdot\bm e-\frac{\bar{\epsilon_k}}{\lambda_+} \right)^+ - y_2^0\lambda_-\left( x\cdot\bm e-\frac{\bar{\epsilon_k}}{\lambda_+} \right)^- \leq u_k \leq y_2^0\lambda_+\left( x\cdot\bm e+\frac{\bar{\epsilon_k}}{\lambda_-} \right)^+ - y_2^0\lambda_-\left( x\cdot\bm e+\frac{\bar{\epsilon_k}}{\lambda_-} \right)^-$$
in $B_1$, thus
\begin{equation}
y_2^0\lambda_+\left( x\cdot\bm e-\frac{\bar{\epsilon_k}}{\lambda_+} \right)^+\leq  u_k^+\leq y_2^0\lambda_+\left( x\cdot\bm e+\frac{\bar{\epsilon_k}}{\lambda_-} \right)^+ \quad \text{in} \quad B_1,
\end{equation}
and
\begin{equation}
y_2^0\lambda_-\left( x\cdot\bm e+\frac{\bar{\epsilon_k}}{\lambda_-} \right)^-\leq u_k^- \leq y_2^0\lambda_-\left( x\cdot\bm e-\frac{\bar{\epsilon_k}}{\lambda_+} \right)^- \quad \text{in} \quad B_1.
\end{equation}

We need to introduce a test function $\phi$ before we prove the compactness, since the subsequent proof is based on the comparison with $\phi$.

\begin{lem}
Let $Q=(Q_1,Q_2)=\frac15\bm{e}$ be a point and $\phi: B_1\rightarrow\mathbb{R}$ be a function defined by
\begin{equation} \label{phi}
\phi(x)=
\begin{cases}
1, \qquad\qquad\qquad\qquad\quad\ \ x\in B_{1/20}(Q), \\
\kappa\left(|x-Q|^{-2}-(\frac34)^{-2}\right), \quad x\in B_{3/4}(Q)\backslash\overline{B_{1/20}}(Q), \\
0, \qquad\qquad\qquad\qquad\quad\ \ \text{otherwise},
\end{cases}
\end{equation}
where $\kappa=\frac{1}{400-(3/4)^2}$. Then it is easy to check that $\phi$ has the following properties:

(1) $0\leq\phi\leq1$ in $B_1$ and $\phi=0$ on $\partial B_1$.

(2) $\mathcal{L}_k\phi=\Delta\phi-\frac{r_k}{y_2^0+r_kx_2}\partial_2\phi=2\kappa|x-Q|^{-4}\bigl( 2+\frac{r_k}{y_2^0+r_kx_2}(x_2-Q_2) \bigr)>0$ in $\{\phi>0\}\backslash\overline{B_{1/20}}(Q)$.

(3) $\partial_e\phi=-2\kappa|x-Q|^{-4}(x-Q)\cdot\bm{e}=-2\kappa|x-Q|^{-4}(x\cdot\bm{e}-\frac15)>0$ in $\{\phi>0\}\cap\{|x\cdot\bm{e}|<\frac15\}$.

(4) $\phi\geq c$ in $B_{1/6}$ for some constant $c>0$.
\end{lem}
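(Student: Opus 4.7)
The plan is to verify each of the four listed properties by direct computation, exploiting the radial structure of $\phi$ on the annulus $A:=B_{3/4}(Q)\setminus\overline{B_{1/20}(Q)}$ together with the uniform separation $y_2^0\ge b$ provided by Proposition \ref{prop1}. I would treat the four items in the stated order, since (2) is the only one that uses anything beyond elementary calculus.

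For (1), I would observe that $|Q|=1/5$ gives $B_{3/4}(Q)\Subset B_{19/20}\subset B_1$, so $\phi\equiv 0$ in a neighbourhood of $\partial B_1$. On $A$ the function $\phi$ is a strictly decreasing radial function of $r:=|x-Q|$ that interpolates between the values $1$ on $\partial B_{1/20}(Q)$ and $0$ on $\partial B_{3/4}(Q)$, the constant $\kappa$ being chosen precisely so as to enforce this boundary matching, which yields $0\le\phi\le 1$ throughout $B_1$. For (3), the gradient identity $\nabla\phi=-2\kappa r^{-4}(x-Q)$ combined with $Q\cdot\bm{e}=1/5$ immediately gives $\partial_e\phi=-2\kappa r^{-4}(x\cdot\bm{e}-1/5)$, which is strictly positive on $\{\phi>0\}\cap\{x\cdot\bm{e}<1/5\}$. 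For (4), the triangle inequality shows $B_{1/6}\subset B_{11/30}(Q)\Subset B_{3/4}(Q)$, so $\phi$ is strictly positive on the compact set $\overline{B_{1/6}}$, and its minimum there yields the desired constant $c$; explicitly $c=\kappa\bigl((30/11)^2-(4/3)^{-2}\bigr)$, attained at the point of $\overline{B_{1/6}}$ furthest from $Q$.

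The only step that requires more than routine calculus is (2). Here I would compute the two-dimensional radial Laplacian of $r^{-2}$ to obtain $\Delta\phi=4\kappa r^{-4}$, then combine it with $\partial_2\phi=-2\kappa r^{-4}(x_2-Q_2)$ to arrive at the stated expression $\mathcal{L}_k\phi=2\kappa r^{-4}\bigl(2+\tfrac{r_k(x_2-Q_2)}{y_2^0+r_kx_2}\bigr)$. To conclude the positivity, I would exploit $|x_2-Q_2|\le 3/4$ on $A$ together with the bound $y_2^0+r_kx_2\ge y_2^0-r_k\ge b/2$, valid for $k$ large thanks to Proposition \ref{prop1} and the scaling \eqref{rk}, which makes the drift correction $O(r_k/b)$ and hence negligible against the constant $2$. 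This absorbing-the-drift-into-the-Laplacian step is the only place where the singular nature of $\mathcal{L}_k$ is felt, and is the main, albeit mild, obstacle in the proof.
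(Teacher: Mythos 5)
Your verification is correct and is precisely the direct computation the paper omits (it states only that the properties are ``easy to check''): the radial Laplacian $\Delta r^{-2}=4r^{-4}$ in dimension two, the gradient identity, the inclusion $B_{3/4}(Q)\subset B_{19/20}$, and the absorption of the drift term using $y_2^0\ge b$ and $r_k\to 0$ are all as intended. The only quibble is the explicit constant in (4): since the subtracted term in $\phi$ is $(3/4)^{-2}=(4/3)^{2}$, the minimum over $\overline{B_{1/6}}$ is $c=\kappa\bigl((30/11)^2-(4/3)^{2}\bigr)$ rather than $\kappa\bigl((30/11)^2-(4/3)^{-2}\bigr)$ — immaterial, as only the existence of some $c>0$ is claimed.
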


The next lemma is an instrumental tool to "improve" the two-plane solution defined as in (\ref{bu}).

\begin{lem} \label{pbh1}
	(Partial Boundary Harnack for branch case)
	Suppose that $\{u_k\}$ is a blow-up sequence of $u$. Then there exist constants $\bar{\epsilon}=\bar{\epsilon}(\lambda_{\pm})>0$ and $\tilde{c}=\tilde{c}(\lambda_{\pm})\in(0,1)$ such that the following property holds.
	
	If
	$$y_2^0\lambda_+(x\cdot\bm{e}+b_0)^+\leq u_k^+\leq y_2^0\lambda_+(x\cdot\bm{e}+a_0)^+ \quad \text{in} \quad B_4$$
	and
	$$-y_2^0\lambda_-(x\cdot\bm{e}+d_0)^-\leq -u_k^-\leq -y_2^0\lambda_-(x\cdot\bm{e}+c_0)^- \quad \text{in} \quad B_4$$
	for $a_0, b_0, c_0, d_0\in(-1/10,1/10)$, $b_0\leq d_0\leq c_0\leq a_0$ and $(a_0-b_0)+(c_0-d_0)\leq\bar{\epsilon}$,
	then there exist $a_1, b_1, c_1, d_1\in(-1/10,1/10)$ with $b_1\leq d_1\leq c_1\leq a_1$ and $a_1-b_1\leq\tilde{c}(a_0-b_0)$, $c_1-d_1\leq\tilde{c}(c_0-d_0)$ such that for $x\in B_{1/6}$,
	\begin{equation} \label{f3-1}
	y_2^0\lambda_+(x\cdot\bm{e}+b_1)^+\leq u_k^+\leq y_2^0\lambda_+(x\cdot\bm{e}+a_1)^+,
	\end{equation}
	and
	\begin{equation}\label{f3-2}
	-y_2^0\lambda_-(x\cdot\bm{e}+d_1)^-\leq -u_k^-\leq -y_2^0\lambda_-(x\cdot\bm{e}+c_1)^-.
	\end{equation}
	In particular, for $\epsilon_k, \bar{\epsilon_k}$ defined as before, let $k$ be sufficiently large we have $a_0=c_0=\frac{\bar{\epsilon_k}}{\lambda_-}$, $b_0=d_0=-\frac{\bar{\epsilon_k}}{\lambda_+}$, with $a_1-b_1=c_1-d_1\leq\tilde{c}\left(\frac{1}{\lambda_-}-\frac{1}{\lambda_+}\right)\bar{\epsilon_k}$ satisfying (\ref{f3-1}) and (\ref{f3-2}).
\end{lem}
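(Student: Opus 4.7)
The plan is to follow the partial boundary Harnack scheme of De Silva, applied \emph{phase by phase} to $u_k^+$ inside $\{u_k>0\}$ and to $-u_k^-$ inside $\{u_k<0\}$, and then to combine the two one-sided improvements. Because the two positive sets are disjoint open regions sharing only the free boundaries, each phase is morally a one-phase problem with small right-hand side $O(r_k)=O(\epsilon_k^2)$ coming from the drift of $\mathcal L_k$. The joint ordering $b_1\leq d_1\leq c_1\leq a_1$ is then enforced by capping each individual improvement with the available slack between the two phases. The ``in particular'' statement is the symmetric special case $a_0=c_0,\ b_0=d_0$ and is read off at the end.

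First I would test at the point $Q=\tfrac15\bm e$, which lies well inside $\{u_k>0\}$: from $b_0\in(-\tfrac1{10},\tfrac1{10})$ and the lower trap one gets $u_k(Q)\geq y_2^0\lambda_+(\tfrac15+b_0)\geq y_2^0\lambda_+/10>0$. The dichotomy is whether $u_k(Q)\geq y_2^0\lambda_+\bigl(\tfrac15+\tfrac{a_0+b_0}{2}\bigr)$ (case A, in which $u_k$ is close to the upper trap and we improve the lower one) or the opposite (case B, symmetric). In case A put $w(x)=u_k(x)-y_2^0\lambda_+(x\cdot\bm e+b_0)$ on $\{u_k>0\}\cap B_4$; then $w\geq 0$, $w(Q)\geq\tfrac12 y_2^0\lambda_+(a_0-b_0)$, and $\mathcal L_kw=\tfrac{r_k\,y_2^0\lambda_+e_2}{y_2^0+r_kx_2}=O(\epsilon_k^2)$. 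Since $\mathcal L_k$ is uniformly elliptic on $B_{1/20}(Q)\subset\{u_k>0\}$ thanks to Proposition \ref{prop1}, the standard Harnack inequality yields a universal $c_0>0$ with $w\geq c_0y_2^0\lambda_+(a_0-b_0)$ on $B_{1/20}(Q)$.

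Next I propagate the gap to $B_{1/6}$ by a sliding-barrier argument using $\phi$. For a small universal $\eta\in(0,c_0)$ consider
$$P_t(x)=y_2^0\lambda_+\bigl(x\cdot\bm e+b_0+t\eta(a_0-b_0)\phi(x)\bigr),\qquad t\in[0,1].$$
Property (2) of $\phi$ gives $\mathcal L_kP_t>0$ on $\{0<\phi<1\}$, and $P_0^+\leq u_k^+$ holds by hypothesis. Let $t^*\in[0,1]$ be the smallest $t$ for which $P_t^+$ touches $u_k^+$ from below inside $B_{3/4}(Q)$; a first-contact point $x^*$ must lie in one of four places: (i) an interior point of $\{u_k>0\}\cap\{P_{t^*}>0\}$, excluded by the strong maximum principle applied to the strict supersolution $u_k-P_{t^*}$ of $\mathcal L_k$; (ii) $\partial B_{1/20}(Q)$, where $\phi=1$ and $\eta<c_0$ together with the Harnack gap prevent contact; (iii) $\partial B_{3/4}(Q)$, where $\phi=0$ so one reduces to the original trap; or (iv) the free boundary $\partial\{u_k>0\}$. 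In case (iv) one has $|\nabla P_{t^*}(x^*)|=y_2^0\lambda_+\bigl|\bm e+t^*\eta(a_0-b_0)\nabla\phi(x^*)\bigr|\geq y_2^0\lambda_++c_1\eta(a_0-b_0)$ by property (3) of $\phi$ (the hypothesis $|x^*\cdot\bm e|<\tfrac{1}{10}<\tfrac15$ follows from the trap), while the viscosity free boundary condition from Lemma \ref{vis} combined with the flatness of $u_k^+$ against $y_2^0\lambda_+(x\cdot\bm e+a_0)^+$ yields $|\nabla u_k^+(x^*)|\leq y_2^0\lambda_++O(\epsilon_k^2)$; since $r_k=O(\epsilon_k^2)\ll a_0-b_0$, the Hopf-type inequality $|\nabla P_{t^*}(x^*)|\leq|\nabla u_k^+(x^*)|$ for a touch from below is contradicted. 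Hence $P_1^+\leq u_k^+$ on $B_{3/4}(Q)$, and property (4) ($\phi\geq c$ on $B_{1/6}$) gives $u_k^+\geq y_2^0\lambda_+(x\cdot\bm e+b_1)^+$ on $B_{1/6}$ with $b_1=b_0+\eta c(a_0-b_0)$. The same argument at $Q'=-\tfrac15\bm e$ for $-u_k^-$ produces $c_1,d_1$ with $c_1-d_1\leq(1-\eta c)(c_0-d_0)$.

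The main obstacle I anticipate is preserving the compatibility ordering $b_1\leq d_1\leq c_1\leq a_1$ after the two independent improvements, since a naive case-A-on-both produces $b_1>b_0$ and $d_1>d_0$ with no a priori reason for $b_1\leq d_1$. The remedy is to cap each improvement by the slack to the adjacent trap, replacing $b_1=b_0+\eta c(a_0-b_0)$ by $b_1=b_0+\min\bigl(\eta c(a_0-b_0),\tfrac12(d_0-b_0)\bigr)$ and similarly for $a_1,c_1,d_1$; a case analysis on which term in each minimum is active, combined with the smallness hypothesis $(a_0-b_0)+(c_0-d_0)\leq\bar\epsilon$, shows that the joint width $(a_1-b_1)+(c_1-d_1)$ contracts by a universal factor $\tilde c\in(0,1)$ depending only on $\lambda_\pm$. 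A secondary technical point in case (iv) is bounding $|\nabla u_k^+(x^*)|$ from above at a two-phase point $x^*\in\Gamma_{\text{tp}}$, where the viscosity condition itself only gives a lower bound; the upper bound is extracted by observing that the inward normal to $\{u_k>0\}$ at $x^*$ is close to $\bm e$ by flatness, so the trap $u_k^+\leq y_2^0\lambda_+(x\cdot\bm e+a_0)^+$ pins $|\nabla u_k^+(x^*)|$ within $O(\epsilon_k)$ of $y_2^0\lambda_+$. The careful bookkeeping of the $O(r_k)=O(\epsilon_k^2)$ corrections from the drift in $\mathcal L_k$ and from the right-hand side $(y_2^0+r_kx_2)\lambda_\pm$ of the free boundary condition, which distinguishes the axisymmetric case from \cite{PSV21}, is ultimately absorbed by the asymptotics $r_k=O(\epsilon_k^2)$ built into the blow-up sequence in \eqref{rk}.
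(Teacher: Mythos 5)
Your overall scheme (Harnack at an interior test point, then a sliding barrier built from $\phi$, then the viscosity boundary condition at the first-contact point) is the right skeleton, and your bookkeeping of the $O(r_k)=O(\epsilon_k^2)$ drift terms matches the paper. But the decision to run the argument \emph{phase by phase} with a purely one-sided barrier $P_t=y_2^0\lambda_+(x\cdot\bm e+b_0+t\eta(a_0-b_0)\phi)^+$ creates a genuine gap exactly at the point you flag as a ``secondary technical point'': the case where the first-contact point $x^*$ lies on $\Gamma_{\rm tp}$. At a two-phase point the viscosity conditions (A.3)/(B.3) only provide the jump relation $|\nabla Q^+|^2-|\nabla Q^-|^2\leq (x_2^*)^2(\lambda_+^2-\lambda_-^2)$ together with \emph{lower} bounds $|\nabla Q^\pm|\geq x_2^*\lambda_\pm$; there is no upper bound on $|\nabla u_k^+|$ alone (the correct value is $x_2^*\alpha$ with $\alpha\geq\lambda_+$ unknown a priori, and $|\nabla u_k^-|$ can be large). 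Your proposed fix --- that the $L^\infty$ trap $u_k^+\leq y_2^0\lambda_+(x\cdot\bm e+a_0)^+$ ``pins'' $|\nabla u_k^+(x^*)|$ within $O(\epsilon_k)$ of $y_2^0\lambda_+$ --- does not hold: if $x^*\cdot\bm e$ sits at the lower end of the strip $[-a_0,-b_0]$, the difference quotient of the upper trap along $\bm e$ is $y_2^0\lambda_+\bigl(1+\tfrac{a_0-b_0}{|x-x^*|}\bigr)$, which is unbounded as $x\to x^*$; an $\epsilon$-flat trap never yields a pointwise gradient bound at the free boundary (obtaining one is essentially the conclusion of the lemma, so the argument would be circular).

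This is precisely why the paper's proof uses genuinely \emph{two-phase} comparison functions and splits Case~1 into two subcases according to the size of $d_0-b_0$ relative to $\eta\epsilon$. When $d_0-b_0\leq\eta\epsilon$ the barrier is $f_t=y_2^0\lambda_+(1+\tau\epsilon/2)(x\cdot\bm e+b_0+\eta t\epsilon\phi)^+-y_2^0\lambda_-(x\cdot\bm e+b_0)^-$; when $d_0-b_0>\eta\epsilon$ both parts are translated together, with the negative slope reduced to $\lambda_-(1-c_1\eta\epsilon)$. In either subcase the negative part is tuned so that at a two-phase contact point the \emph{jump} condition $|\nabla f_{\bar t}^+|^2-|\nabla f_{\bar t}^-|^2\leq(y_2^0+r_k\bar x_2)^2(\lambda_+^2-\lambda_-^2)$ itself produces the contradiction, with the term $2(y_2^0)^2(\lambda_+^2-\lambda_-^2)\eta\bar t\epsilon\,\partial_e\phi>0$ doing the work; no upper bound on $|\nabla u_k^+|$ alone is ever needed. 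This coupling of the two phases inside the barrier is also what makes the ordering $b_1\leq d_1\leq c_1\leq a_1$ come out automatically; your a posteriori ``capping'' of two independent improvements by $\tfrac12(d_0-b_0)$ does not repair the contact-point argument and is not a substitute for it. To complete your proof you would need to replace the one-sided barrier by a two-phase one along these lines.
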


Compared with the proof in \cite{PSV21}, the main difference here is the elliptic operator $\mathcal{L}$. Recall that the free boundaries are away from the $x_1$-axis, $\mathcal{L}$ is uniformly elliptic, and we can use the maximum principle. We contain the proof for the sake of completeness.

\begin{proof}
	We state the proof for $u_k^+$.
	
	Set $P=2\bm{e}$ and we distinguish two cases.
	
	\emph{Case 1. Improvement from below.}
	
Assume
$$u_k^+(P)\geq y_2^0\lambda_+(2+b_0)^++\frac{y_2^0\lambda_+(a_0-b_0)}{2},$$
which means that $u_k^+(P)$ is closer to $y_2^0\lambda_+(2+a_0)$ than to $y_2^0\lambda_+(2+b_0)^+$. In this case we will show
$$u_k^+(x)\geq y_2^0\lambda_+(x\cdot\bm{e}+b_1)^+$$
in a smaller ball centered at the origin.

Note that $\epsilon:=a_0-b_0\leq\bar{\epsilon}$, and we have
$$u_k^+\geq y_2^0\lambda_+(1+\tau\epsilon)(x\cdot\bm{e}+b_0)^+ \quad \text{in} \quad B_1$$
for a dimensional constant $\tau$. Next we distinguish two further sub-cases.

\emph{Case 1.1. $0\leq d_0-b_0\leq \eta\epsilon$ for $\epsilon$ being a small constant. }

For $x\in B_1$, we deduce that
\begin{equation} \label{f4}
\begin{aligned}
u_k(x) &\geq y_2^0\lambda_+(1+\tau\epsilon)(x\cdot\bm{e}+b_0)^+-y_2^0\lambda_-(x\cdot\bm{e}+d_0)^- \\
&\geq y_2^0\lambda_+(1+\tau\epsilon)(x\cdot\bm{e}+b_0)^+-y_2^0\lambda_-(x\cdot\bm{e}+b_0)^-.
\end{aligned}
\end{equation}
Now we set a new function
$$f_t(x)=y_2^0\lambda_+(1+\tau\epsilon/2)(x\cdot\bm{e}+b_0+\eta t\epsilon\phi)^+-y_2^0\lambda_-(x\cdot\bm{e}+b_0)^-$$
with $\phi$ defined as in (\ref{phi}), $t\in[0,1]$ and $\eta=\eta(\tau, \epsilon)$ a small universal constant satisfying
$$(1+\tau\epsilon)(x\cdot\bm{e}+b_0)^+\geq(1+\tau\epsilon/2)(x\cdot\bm{e}+b_0+\eta\epsilon)^+ \quad \text{in} \quad B_{1/20}(Q).$$
Hence,
\begin{equation*}
\begin{aligned}
u_k(x) &\geq y_2^0\lambda_+(1+\tau\epsilon)(x\cdot\bm{e}+b_0)^+ \\
&\geq y_2^0\lambda_+(1+\tau\epsilon/2)(x\cdot\bm{e}+b_0+\eta\epsilon)^+ \\
&\geq f_t(x) \quad \text{in} \quad B_{1/20}(Q).
\end{aligned}
\end{equation*}
Notice that $f_0(x)\leq u_k(x)$ in $B_1$. Let $\bar{t}\in[0,1]$ be the largest $t$ such that $f_t(x)\leq u_k(x)$ in $B_1$. We claim that $\bar{t}=1$. Indeed, assume $\bar{t}<1$, then there exists a point $\bar{x}\in B_1$ such that
$$u_k(x)-f_{\bar{t}}(x)\geq u_k(\bar{x})-f_{\bar{t}}(\bar{x})=0$$
for all $x\in B_1$. Then $\bar{x}\in\{0<\phi<1\}$.

We have for $k$ sufficiently large,
\begin{equation*}
\begin{aligned}
\mathcal{L}_kf_{\bar{t}}(x) &=y_2^0\lambda_+(1+\tau\epsilon/2)\left(\Delta-\frac{r_k}{y_2^0+r_kx_2}\partial_2\right)(x\cdot\bm{e}+b_0+\eta\bar{t}\epsilon\phi) \\
&=y_2^0\lambda_+(1+\tau\epsilon/2)\left[ -\frac{r_ke_2}{y_2^0+r_kx_2}+\eta\bar{t}\epsilon\cdot2\kappa|x-Q|^{-4}(2+\frac{r_k(x_2-Q_2)}{y_2^0+r_kx_2}) \right] \\
&>0
\end{aligned}
\end{equation*}
in $\{x\cdot\bm{e}+b_0+\eta\bar{t}\epsilon\phi>0\}\cap\{f_{\bar{t}}\neq0\}\cap B_1.$ Thanks to the maximum principle, $\bar{x}\notin\{f_{\bar{t}}\neq0\}$. Hence $\bar{x}$ is a free boundary point of $u_k$. Moreover, it follows from the fact that $f_{\bar{t}}$ changes sign in a neighborhood of $\bar{x}$, either $\bar{x}\in\partial\Omega_{u_k}^+\backslash\partial\Omega_{u_k}^-$ or $\bar{x}\in\partial\Omega_{u_k}^+\cap\partial\Omega_{u_k}^-$.

If $\bar{x}\in\partial\Omega_{u_k}^+\backslash\partial\Omega_{u_k}^-$, then thanks to the definition of viscosity solutions,
\begin{equation*}
\begin{aligned}
(y_2^0+r_k\bar{x}_2)^2\lambda_+^2 &\geq |\nabla f_{\bar{t}}(\bar{x})|^2 \\
&=(y_2^0\lambda_+)^2(1+\tau\epsilon/2)^2+2(y_2^0\lambda_+)^2\eta\bar{t}\epsilon\partial_e\phi(\bar{x})+O(\epsilon^2) \\
&>(y_2^0+r_k\bar{x}_2)^2\lambda_+^2
\end{aligned}
\end{equation*}
for $k$ sufficiently large, provided $\epsilon\leq\bar{\epsilon}\ll1$.

If $\bar{x}\in\partial\Omega_{u_k}^+\cap\partial\Omega_{u_k}^-$, then the definition of viscosity solutions gives
\begin{equation*}
\begin{aligned}
(y_2^0+r_k\bar{x}_2)^2(\lambda_+^2-\lambda_-^2) &\geq |\nabla f_{\bar{t}}^+(\bar{x})|^2-|\nabla f_{\bar{t}}^-(\bar{x})|^2 \\
&=(y_2^0\lambda_+)^2(1+\tau\epsilon/2)^2-(y_2^0\lambda_-)^2+2(y_2^0)^2(\lambda_+^2-\lambda_-^2)\eta\bar{t}\epsilon\partial_e\phi(\bar{x})+O(\epsilon^2) \\
&>(y_2^0+r_k\bar{x}_2)^2(\lambda_+^2-\lambda_-^2)
\end{aligned}
\end{equation*}
for $k$ sufficiently large, provided $\epsilon\leq\bar{\epsilon}\ll1$ and $\eta\ll\tau$.

These contradictions imply that $\bar{t}=1$. Notice that $\phi$ has a strictly positive lower bound in $B_{1/6}$,
\begin{equation*}
\begin{aligned}
u_k(x) &\geq y_2^0\lambda_+(1+\tau\epsilon)(x\cdot\bm{e}+b_0+\eta\epsilon\phi)^+-y_2^0\lambda_-(x\cdot\bm{e}+b_0)^- \\
&\geq y_2^0\lambda_+(x\cdot\bm{e}+b_0+\bar{c}\epsilon)^+-y_2^0\lambda_-(x\cdot\bm{e}+b_0)^-
\end{aligned}
\end{equation*}
for a suitable $\bar{c}$.

Set $a_1=a_0$, $b_1=b_0+\bar{c}\epsilon$ and it concludes the proof in this subcase.

\emph{Case 1.2. $d_0-b_0>\eta\epsilon$ for $\eta$ being a small constant.}

In this subcase we consider the function
$$f_t(x)=y_2^0\lambda_+(1+\tau\epsilon/2)(x\cdot\bm{e}+b_0+c_2 t\epsilon\phi)^+-y_2^0\lambda_-(1-c_1\eta\epsilon)(x\cdot\bm{e}+b_0+c_2t\epsilon\phi)^-.$$
Then $u_k(x)\geq f_0(x)$ in $B_1$ for $\eta$ determined in case 1.1, since
\begin{equation*}
\begin{aligned}
u_k(x)&\geq y_2^0\lambda_+(1+\tau\epsilon)(x\cdot\bm e+b_0)^+ - y_2^0\lambda_-(x\cdot\bm e+d_0)^- \\
&\geq y_2^0\lambda_+(1+\tau\epsilon)(x\cdot\bm e+b_0)^+ - y_2^0\lambda_-(1-c_1\eta\epsilon)(x\cdot\bm e+b_0)^-
\end{aligned}
\end{equation*}
for some $c_1=c_1(\eta,\epsilon)$.

Consider again $\bar{t}\in[0,1]$ be the largest $t$ such that $f_t\leq u_k$ in $B_1$ and $\bar{x}$ be the touching point in $B_1$. Assume $\bar{t}<1$, we can deduce as before that $\bar{x}\in\{0<\phi<1\}\cap B_1$.

It is straightforward to deduce that
\begin{equation*}
\mathcal{L}_kf_{\bar{t}}(x)=y_2^0\lambda_+(1+\tau\epsilon/2)\left[ \frac{-r_ke_2}{y_2^0+r_kx_2}+c_2\bar{t}\epsilon\cdot2\kappa|x_2-Q_2|^{-4} \right]>0
\end{equation*}
in $\{(x\cdot\bm{e}+b_0+c_2 t\epsilon\phi)^+>0\}$ and
\begin{equation*}
\mathcal{L}_kf_{\bar{t}}(x)=y_2^0\lambda_-(1-c_1\eta\epsilon)\left[ \frac{-r_ke_2}{y_2^0+r_kx_2}+c_2\bar{t}\epsilon\cdot2\kappa|x_2-Q_2|^{-4} \right]>0
\end{equation*}
in $\{(x\cdot\bm{e}+b_0+c_2 t\epsilon\phi)^->0\}$. Thus
we know from the maximum principle that $\bar{x}\in\{f_{\bar{t}}(x)=0\}$, which implies that $\bar{x}$ is a free boundary point of $u_k$. By the definition of $f_{\bar{t}}(x)$, we have $\bar{x}\in\partial\Omega_{u_k}^+\backslash\partial\Omega_{u_k}^-$ or $\bar{x}\in\partial\Omega_{u_k}^+\cap\partial\Omega_{u_k}^-$.

Recalling the definition of viscosity solution, for $\bar{x}\in\partial\Omega_{u_k}^+\backslash\partial\Omega_{u_k}^-$, one gets
\begin{equation*}
\begin{aligned}
(y_2^0+r_k\bar{x}_2)^2\lambda_+^2 &\geq |\nabla f_{\bar{t}}^+(\bar{x})|^2 \\
&=(y_2^0\lambda_+)^2(1+\tau\epsilon/2)^2+2(y_2^0\lambda_+)^2c_2\bar{t}\epsilon\partial_e\phi(\bar{x})+(y_2^0\lambda_-)^2 \\
& \quad +2(y_2^0)^2\lambda_+\lambda_-(1+c_2\bar{t}\epsilon\partial_e\phi(\bar{x})+\tau\epsilon/2)+O(\epsilon^2) \\
&>(y_2^0+r_k\bar{x}_2)^2\lambda_+^2,
\end{aligned}
\end{equation*}
and for $\bar{x}\in\partial\Omega_{u_k}^+\cap\partial\Omega_{u_k}^-$, one has
\begin{equation*}
\begin{aligned}
(y_2^0+r_k\bar{x}_2)^2(\lambda_+^2-\lambda_-^2) &\geq |\nabla f_{\bar{t}}^+(\bar{x})|^2 - |\nabla f_{\bar{t}}^-(\bar{x})|^2 \\
&>(y_2^0+r_k\bar{x}_2)^2(\lambda_+^2-\lambda_-^2).
\end{aligned}
\end{equation*}
These contradictions imply $\bar{t}=1$. Then
\begin{equation*}
\begin{aligned}
u_k^+ &\geq y_2^0\lambda_+(1+\tau\epsilon/2)(x\cdot\bm{e}+b_0+c_2\epsilon\phi)^+ \\
&\geq y_2^0\lambda_+(x\cdot\bm{e}+b_0+\bar{c}_2\epsilon)^+
\end{aligned}
\end{equation*}
in $B_{1/6}$, where $k$ is a suitable constant.

Set $a_1=a_0$, $b_1=b_0+\bar{c}_2\epsilon$ and it concludes the proof in this subcase.

\emph{Case 2. Improvement from above.}

	Suppose
	$$u_k^+(P)\leq y_2^0\lambda_+(2+a_0)^+-\frac{y_2^0\lambda_+(a_0-b_0)}{2},$$
	which means that $u_k^+(P)$ is closer to $\lambda_+(2+b_0)^+$ than to $\lambda_+(2+a_0)^+$. We will show
	$$u_k^+(x)\leq y_2^0\lambda_+(x\cdot\bm{e}+a_1)^+$$
	in a smaller ball centered at the origin.
	
	As in case 1, set
	$$f_t(x)=y_2^0\lambda_+(1-\tau\epsilon/2)(x\cdot\bm{e}+a_0-tc\epsilon\phi)^+$$
	with $\phi$ defined as in (\ref{phi}) and $c=c(\tau, \epsilon, a_0)$ a small constant satisfying
	$$u_k(x)\leq y_2^0\lambda_+(1-\tau\epsilon)(x\cdot\bm{e}+a_0)^+ \leq y_2^0\lambda_+(1-\tau\epsilon/2)(x\cdot\bm{e}+a_0-c\epsilon)^+ \leq f_t(x)$$
	for any $x\in B_{1/20}(Q)$ and $t\in[0,1]$.
	Define $\bar{t}$ as in case 1. Utilizing the maximum principle again for $\mathcal{L}_k$ we can deduce that $\bar{t}=1$, the property of $\phi$ gives the desired consequence. We omit the details here.
\end{proof}

Now we give the proof of Proposition \ref{comp}.

\begin{proof}[Proof of Proposition \ref{comp} for $0\leq l<\infty$]
	Utilizing lemma \ref{pbh1} we have that for $u_k^+$, there are constants $a_1, b_1$ and $\tilde{c}\in(0,1)$ with $a_1-b_1<\tilde{c}\left(\frac{1}{\lambda_-}-\frac{1}{\lambda_+}\right)\bar{\epsilon_k}$ such that
	$$y_2^0\lambda_+(x\cdot\bm{e}+b_1)^+\leq u_k^+\leq y_2^0\lambda_+(x\cdot\bm{e}+a_1)^+ \quad \text{in} \quad B_{\frac12\cdot\frac{1}{24}}(x_0)$$
	for any $x_0\in B_{1/2}$ and $B_{1/2}(x_0)\subset B_2$.
	
	Let $n>0$ be an integer that $\frac12\left( \frac1{24} \right)^{n+1}<\frac{\epsilon_k}{\bar{\epsilon_k}}\leq\frac12\left( \frac1{24} \right)^n$. We carry out the iteration and get that
	$$y_2^0\lambda_+(x\cdot\bm{e}+b_n)^+\leq u_k^+\leq y_2^0\lambda_+(x\cdot\bm{e}+a_n)^+ \quad \text{in} \quad B_{\frac12\cdot\frac{1}{24^n}}(x_0)$$
	for $a_n-b_n\leq\tilde{c}^n\left(\frac{1}{\lambda_-}-\frac{1}{\lambda_+}\right)\bar{\epsilon_k}$, where $n$ is a positive integer. Hence
	\begin{equation*}
	\begin{aligned}
	0\leq u_k^+(x)-y_2^0\lambda_+(x\cdot\bm{e}+b_n)^+&\leq y_2^0\lambda_+[(x\cdot\bm{e}+a_n)^+-(x\cdot\bm{e}+b_n)^+] \\
	&\leq y_2^0\lambda_+(a_n-b_n)^+ \\
	&\leq\tilde{c}^n y_2^0 \left(\frac{\lambda_+}{\lambda_-}-1\right)\bar{\epsilon_k}
	\end{aligned}
	\end{equation*}
	in $B_{\frac12\cdot\frac{1}{24^n}}(x_0)$, and we have
	$$|u_k^+-y_2^0\lambda_+(x\cdot\bm{e})^+-y_2^0\lambda_+b_n^+|\leq\tilde{c}^n y_2^0 \left(\frac{\lambda_+}{\lambda_-}-1\right)\bar{\epsilon_k}$$
	in $B_{\frac12\cdot\frac{1}{24^n}}(x_0)$.
	
	Now define a sequence $w_k$ by
	\begin{equation} \label{f5}
	w_k=
	\begin{cases}
	w_{+,k}=\frac{u_k(x)-y_2^0\lambda_+(x\cdot\bm{e})^+}{y_2^0\alpha_k\epsilon_k}, \quad x\in\Omega_{u_k}^+\cap B_1, \\
	w_{-,k}=\frac{u_k(x)+y_2^0\lambda_-(x\cdot\bm{e})^-}{y_2^0\beta_k\epsilon_k}, \quad x\in\Omega_{u_k}^-\cap B_1.
	\end{cases}
	\end{equation}
	Then $|w_{+,k}-\frac{y_2^0\lambda_+b_n^+}{y_2^0\alpha_k\epsilon_k}|\leq\frac{\tilde{c}^n \left(\frac{\lambda_+}{\lambda_-}-1\right)}{\alpha_k\epsilon_k}\bar{\epsilon_k}$, which leads to
	\begin{equation*}
	\begin{aligned}
	|w_{+,k}(x)-w_{+,k}(x_0)| &\leq \left|w_{+,k}(x)-\frac{\lambda_+b_n^+}{\alpha_k\epsilon_k}\right|+\left|\frac{\lambda_+b_n^+}{\alpha_k\epsilon_k}-w_{+,k}(x_0)\right| \\
	&\leq\frac{2 \left(\frac{\lambda_+}{\lambda_-}-1\right)}{\alpha_k\epsilon_k}\tilde{c}^n\bar{\epsilon_k}
	\end{aligned}
	\end{equation*}
	for any $x\in\Omega_{u_k}^+\cap B_{\frac12\cdot\frac{1}{24^n}}(x_0)$. Now choose $\gamma=\gamma(\tilde{c})$ such that $(\frac{1}{24})^\gamma=\tilde{c}$. Then for $\frac12(\frac{1}{24})^{n+1}\leq|x-x_0|<\frac12(\frac{1}{24})^n$,
	$$|w_{+,k}(x)-w_{+,k}(x_0)|\leq\frac{2 \left(\frac{\lambda_+}{\lambda_-}-1\right)}{\alpha_k\epsilon_k}\bar{\epsilon_k}\left(\frac{1}{24}\right)^{n\gamma}\leq C\left(\frac{1}{24}\right)^{\gamma(n+1)}\leq C|x-x_0|^\gamma.$$
	Hence
	$$|w_{+,k}(x)-w_{+,k}(x_0)|\leq C(n)|x-x_0|^\gamma \quad \text{in} \quad \Omega_{u_k}^+\cap(B_{\frac12\cdot\frac{1}{24^n}}(x_0)\backslash B_{\frac12\cdot\frac{1}{24^{n+1}}}(x_0)).$$
	Due to the arbitrariness of $x_0$ in $\Omega_{u_k}^+\cap B_{1/2}$, we conclude that
	$$|w_{+,k}(x)-w_{+,k}(y)|\leq C(n)|x-y|^\gamma$$
	for $x, y\in \Omega_{u_k}^+\cap B_{1/2}$ and $|x-y|>\frac{\epsilon_k}{\bar{\epsilon_k}}$.
	
	By the Ascoli-Arzela Theorem, there is a H\"older-continuous function $w_+\in C^{0,\gamma}\left(B_1\cap\{x\cdot\bm{e}>0\}\right)$ such that
	$$w_{+,k}\rightarrow w_+ \quad \text{in} \quad B_1\cap\{x\cdot\bm{e}>0\}$$
	uniformly under a subsequence. The detailed proof is referred to \cite{V23}, Theorem 7.15.
	
	Set
	$$\tilde{\Gamma}_k^{+}:=\bigl\{ (x,w_{+,k}(x)): x\in\overline{\Omega_{u_k}^{+}\cap B_{1/2}} \bigr\}.$$
	The H\"older convergence of $w_{+,k}$ together with the Ascoli-Arzela Theorem gives the Hausdorff convergence of $\tilde{\Gamma}_k^{+}$ to
	$$\tilde{\Gamma}_{+}:=\bigl\{ (x,w_+(x)):x\in B_{1/2} \bigr\}.$$
	
	Now set another function with $l$ defined as in (\ref{l}),
	\begin{equation*}
	h_k(x)=
	\begin{cases}
	\frac{H_{\alpha_k,\bm{e}}-H_{\lambda_+,\bm{e}}}{y_2^0\alpha_k\epsilon_k}\rightarrow l\frac{x\cdot\bm{e}}{\lambda_+^2} \quad \text{for} \quad x\cdot\bm{e}>0, \\
	\frac{H_{\alpha_k,\bm{e}}-H_{\lambda_+,\bm{e}}}{y_2^0\beta_k\epsilon_k}\rightarrow l\frac{x\cdot\bm{e}}{\lambda_-^2} \quad \text{for} \quad x\cdot\bm{e}>0.
	\end{cases}
	\end{equation*}
	Combining $v_k=w_k - h_k$
	we get the Hausdorff convergence of $\Gamma_k^+$ to $\Gamma_+$ and the pointwise convergence for $v_{+,k}$ to $v_+$. The argument for $v_{-,k}$ is symmetric.
	
\end{proof}

\subsubsection{The linearized problem}

After proper extension for $v_{\pm}$ in $B_{1/2}$, set
\begin{equation*}
v=v_++v_-.
\end{equation*}
Note that $v_-$ is not necessarily the negative part of $v$.

We next show that the limiting function $v$ solves the following linearized problem. Unlike the situation in \cite{PSV21}, the viscosity boundary conditions for $v_k$ do not remain constant, which involves the blow-up radius $r_k$ for $u_k$. Hence we have put an additional assumption that $r_k=O(\epsilon_k^2)$ as in the beginning of Section 3 to get over the technical difficulty. Moreover, in \cite{PSV21} the authors dealt with the special case $\bm{e}=(0,1)$, while we are assuming that $\bm{e}$ is arbitrary.

Let $u_k$ be a blow-up sequence with $\alpha_k, \epsilon_k$ satisfying (\ref{uk}), and let $v_k, l$ be as in (\ref{vk}) (\ref{l}). Notice that the free boundaries $\partial\{u_0>0\}\cup\partial\{u_0<0\}$ of the blow-up limit $u_0$ at an interior point will include both two-phase boundary points and one-phase boundary points, see Figure 10. Then the limiting function $v$, defined as above, solves the following linearized problem.

\begin{figure}[!h]
	\includegraphics[width=170mm]{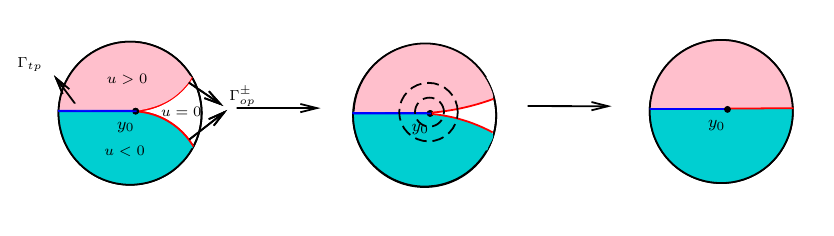}
	\caption{The blow-up at the branch point.}
\end{figure}

\begin{prop}  \label{lim1}
	(The limit linearized problem for $0\leq l<\infty$)
	In the case $0\leq l<\infty$, $v$ is a viscosity solution to a "two-membrane problem":
	\begin{equation} \label{tm}
	\begin{cases}
	\Delta v_{\pm}=0 \qquad\qquad\ \ \text{in} \quad \{(x\cdot\bm{e})^{\pm}>0\}\cap B_{1/2},\\
	v_+\leq v_- \qquad\qquad\ \ \text{on} \quad \{x\cdot \bm{e}=0\}\cap B_{1/2},\\
	\lambda_{\pm}^2\partial_ev_{\pm}+l\geq 0 \quad\ \ \, \text{on} \quad \{x\cdot \bm{e}=0\}\cap B_{1/2},\\
	\lambda_{\pm}^2\partial_ev_{\pm}+l=0 \quad\ \ \  \text{on} \quad \mathcal{J},\\
	\lambda_+^2\partial_ev_+=\lambda_-^2\partial_ev_- \ \ \, \, \text{on} \quad \mathcal{C},
	\end{cases}
	\end{equation}
	where $\partial_e$ denotes the derivative in the direction $\bm{e}$.
\end{prop}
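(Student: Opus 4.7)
The plan is to check each item of (\ref{tm}) separately by translating it back to a property of the approximating sequence $u_k$ and then letting $k\to\infty$, relying on Proposition \ref{comp}, the assumption $r_k=O(\epsilon_k^2)$ recorded in (\ref{rk}), and the viscosity formulation of the free boundary conditions provided by Lemma \ref{vis}.

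For the interior equation, I would start from $\mathcal{L}_k u_k=\Delta u_k-\frac{r_k}{y_2^0+r_kx_2}\partial_2 u_k=0$ inside $\Omega_{u_k}^{\pm}$. Since the affine profile $y_2^0\alpha_k(x\cdot\bm{e})$ (respectively $-y_2^0\beta_k(x\cdot\bm{e})$) is itself harmonic with bounded gradient, definition (\ref{vk}) gives
\[
\Delta v_{\pm,k}=\frac{1}{y_2^0\alpha_k\epsilon_k}\cdot\frac{r_k}{y_2^0+r_kx_2}\,\partial_2u_k=O\!\left(\frac{r_k}{\epsilon_k}\right)=O(\epsilon_k),
\]
using the uniform Lipschitz bound on $u_k$ together with (\ref{rk}). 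Combined with the uniform convergence $v_{\pm,k}\to v_{\pm}$ on compact subsets of $\{(x\cdot\bm{e})^{\pm}>0\}$ supplied by Proposition \ref{comp}(1), this passes to the distributional limit $\Delta v_{\pm}=0$, and classical harmonic lifting upgrades it to the pointwise statement. The ordering $v_+\leq v_-$ on $\{x\cdot\bm{e}=0\}\cap B_{1/2}$ is already contained in Corollary \ref{cor1}.

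The boundary relations on the line $\{x\cdot\bm{e}=0\}$ form the heart of the proof and should be established in the viscosity sense by contradicting Lemma \ref{vis}. Given a smooth test function $\phi$ that touches $v_{\pm}$, or the suitable combination needed to phrase the transmission condition on $\mathcal{C}$, strictly from the appropriate side at a point $x_0\in\{x\cdot\bm{e}=0\}\cap B_{1/2}$, I would build a comparison function for $u_k$ of the schematic form
\[
\Phi_k(x)=y_2^0\alpha_k\bigl(x\cdot\bm{e}+\epsilon_k\psi_+(x)\bigr)^+-y_2^0\beta_k\bigl(x\cdot\bm{e}+\epsilon_k\psi_-(x)\bigr)^-,
\]
where $\psi_{\pm}$ are slight perturbations of $\phi$ tailored to the case. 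The pointwise convergence asserted in Proposition \ref{comp}(2) then forces $\Phi_k$ to touch $u_k$ at some $x_k\to x_0$; if $x_0\in\mathcal{J}$ Corollary \ref{cor1} places $x_k$ on a one-phase portion $\Gamma_{\text{op}}^{\pm}$, while if $x_0\in\mathcal{C}$ the second assertion of the same corollary lets me pick $x_k\in\Gamma_{\text{tp}}$. Taylor expanding $|\nabla\Phi_k^{\pm}|^2$ to first order in $\epsilon_k$, and cancelling the zeroth-order contributions via $\alpha_k^2-\beta_k^2=\lambda_+^2-\lambda_-^2$ and the definition (\ref{l}) of $l$, the viscosity identity of Lemma \ref{vis} at $x_k$ collapses in the limit to $\lambda_{\pm}^2\partial_e v_{\pm}+l=0$ on $\mathcal{J}$ and to $\lambda_+^2\partial_e v_+=\lambda_-^2\partial_e v_-$ on $\mathcal{C}$. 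The general one-sided inequality $\lambda_{\pm}^2\partial_e v_{\pm}+l\geq 0$ on all of $\{x\cdot\bm{e}=0\}\cap B_{1/2}$ is the analogous limit of the degeneracy relation (\ref{bc2}), namely $|\nabla u_k^{\pm}|\geq(y_2^0+r_kx_2)\lambda_{\pm}$ on $\Gamma_{\text{tp}}$.

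The main obstacle is the careful bookkeeping in the Taylor expansion. Because $\bm{e}$ is an arbitrary unit vector and $\mathcal{L}_k$ is \emph{not} rotation invariant, one cannot reduce to $\bm{e}=(0,1)$ as in \cite{PSV21}; additionally the Bernoulli slope $(y_2^0+r_kx_2)\lambda_{\pm}$ genuinely depends on $x_2$. The choice $r_k=O(\epsilon_k^2)$ in (\ref{rk}) is engineered precisely so that every such $x_2$-dependent correction is of order $\epsilon_k^2$ and therefore disappears after the $\epsilon_k^{-1}$ rescaling built into $v_{\pm,k}$; this is what allows both the singular term $x_2^{-1}\partial_2$ and the varying Bernoulli slope to drop out of the limit problem (\ref{tm}), leaving only the classical two-membrane system.
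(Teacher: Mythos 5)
Your strategy coincides with the paper's own proof: the interior harmonicity and the ordering $v_+\leq v_-$ are disposed of exactly as you indicate, and each boundary relation is obtained by lifting a test function touching $v_{\pm}$ to a comparison function for $u_k$ (this is precisely the content of Lemma \ref{touch}), locating a contact point $x_k\to x_0$ on $\partial\Omega_{u_k}^{\pm}$, invoking the viscosity inequalities of Lemma \ref{vis}, Taylor expanding in $\epsilon_k$, and using $r_k=O(\epsilon_k^2)$ so that the $x_2$-dependent corrections to both the operator and the Bernoulli slope vanish after the $\epsilon_k^{-1}$ rescaling.

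The one concrete gap is in your treatment of $\mathcal{C}$. Corollary \ref{cor1} only guarantees that \emph{some} sequence of two-phase points of $u_k$ converges to $x_0\in\mathcal{C}$; it does not let you ``pick'' the contact point $x_k$ of your comparison function $\Phi_k$ to lie on $\Gamma_{\text{tp}}$ --- the contact point is wherever $\Phi_k$ first touches $u_k$, and it may be a one-phase point of $u_k$ even when $x_0\in\mathcal{C}$. The paper's Step 3 therefore runs a case distinction: first, the normalization of the touching data ($q>0$, hence $p>0$ when $\lambda_+^2p>\lambda_-^2q$) rules out $x_k\in\partial\Omega_{u_k}^-\setminus\partial\Omega_{u_k}^+$; next, if $x_k\in\partial\Omega_{u_k}^+\setminus\partial\Omega_{u_k}^-$, the one-phase viscosity inequality $(y_2^0+r_kx_2)^2\lambda_+^2\geq|\nabla Q_k^+(x_k)|^2$ forces $\lambda_+^2p+l\leq 0$ in the limit, contradicting $p>0$ and $l\geq 0$; only then is one reduced to $x_k\in\Gamma_{\text{tp}}$, where the jump condition yields the transmission relation. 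Without this extra case your argument for $\lambda_+^2\partial_ev_+=\lambda_-^2\partial_ev_-$ is incomplete, although the fix uses only tools you already have in hand.
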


Now we establish the convergence of $v_k$ at hand, the main difficulty here is to check the boundary condition in viscosity sense. We need to construct a series of comparison functions of $u_k$ to reach the desired conclusion. A useful touching lemma will be given in Appendix E for the completeness of the proof.

\begin{proof}
	We divide the proof into 3 steps.
	
	\emph{Step 1.} We expect to prove $\lambda_{\pm}^2\partial_e v_{\pm}+l\geq0$ on $B_{1/2}\cap\{x\cdot\bm{e}=0\}$.
	
	Next we focus on $v_-$.
	
	Suppose that there is a strictly subharmonic function $\bar{P}$ with $\partial_e\bar{P}=0$, the comparison function
	$$P=p(x\cdot\bm{e})+\bar{P}$$
	touches $v_-$ strictly from below at $x_0\in B_{1/2}\cap\{x\cdot\bm{e}=0\}$ with $\lambda_-^2p+l<0$.
	
	Exploiting lemma \ref{touch} in Appendix E, there is a sequence of $\{x_k\}\rightarrow x_0$, $x_k\in\partial\Omega_{u_k}^-$ and  a series of comparison functions $Q_k$ touching $-u_k^-$ from below at $x_k$, such that
	$$\nabla Q_k^-=-y_2^0\beta_k\bm{e}+y_2^0\beta_k\epsilon_k\nabla P_-(x_0)+O(\epsilon_k^2).$$
	Hence
	\begin{equation*}
	\begin{aligned}
	(y_2^0+r_kx_2)^2\lambda_-^2 &\leq|\nabla Q_k^-(x_k)|^2 \\
	&=(y_2^0\beta_k)^2+2(y_2^0\beta_k)^2\epsilon_kp+O(\epsilon_k^2).
	\end{aligned}
	\end{equation*}
	Noticing $l<\infty$, we have $\beta_k=\lambda_-+O(\epsilon_k)$. Recalling that $r_k=O(\epsilon_k^2)$, the above inequality leads to
	$$-\frac{l}{\lambda_-^2}=\lim_{k\rightarrow\infty}\frac{\lambda_-^2-\beta_k^2}{2\beta_k^2\epsilon_k}\leq\lim_{k\rightarrow\infty}\left[ p-\frac{r_kx_2(2y_2^0+r_kx_2)\lambda_-^2}{2(y_2^0\beta_k)^2\epsilon_k}+O(\epsilon_k) \right]=p<-\frac{l}{\lambda_-^2},$$
	which is a contradiction.
	
	Hence, $\lambda_-^2\partial_ev_-+l\geq 0$ on $\{x\cdot \bm{e}=0\}\cap B_{1/2}$. The argument for $v_+$ is symmetric as for $v_-$.
	
	\emph{Step 2.} We expect to prove $\lambda_{\pm}^2\partial_ev_{\pm}+l=0$ on $\mathcal{J}$.

	Again we focus on $v_-$.
	The previous steps show that we only need to check for a strictly superharmonic function $\bar{P}$ with $\partial_e\bar{P}=0$, that if $P=p(x\cdot\bm{e})+\bar{P}$ touches $v_-$ strictly from above at $x_0\in\mathcal{J}$, then $\lambda_-^2p+l\leq0$.
	
	In fact if not, because of lemma \ref{touch}, there is a sequence of $\{x_k\}\rightarrow x_0$, $x_k\in\partial\Omega_{u_k}^-$ and  a series of comparison functions $Q_k$ touching $-u_k^-$ from above at $x_k$, such that
	$$\nabla Q_k^-=-y_2^0\beta_k\bm{e}+y_2^0\beta_k\epsilon_k\nabla P_-(x_0)+O(\epsilon_k^2).$$
	Combined with the optimal conditions,
	\begin{equation*}
	\begin{aligned}
	(y_2^0+r_kx_2)^2\lambda_-^2 &\geq|\nabla Q_k^-(x_k)|^2 \\
	&=(y_2^0\beta_k)^2+2(y_2^0\beta_k)^2\epsilon_kp+O(\epsilon_k^2),
	\end{aligned}
	\end{equation*}
	and we have
	$$-\frac{l}{\lambda_-^2}=\lim_{k\rightarrow\infty}\frac{\lambda_-^2-\beta_k^2}{2\beta_k^2\epsilon_k}\geq\lim_{k\rightarrow\infty}\left[ p-\frac{r_kx_2(2y_2^0+r_kx_2)\lambda_-^2}{2(y_2^0\beta_k)^2\epsilon_k}+O(\epsilon_k) \right]=p>\frac{l}{\lambda_-}$$
	since $r_k=O(\epsilon_k^2)$, which is impossible.
	
	\emph{Step 3.} We expect to prove the fact that $\lambda_+^2\partial_ev_+=\lambda_-^2\partial_ev_-$ on $\mathcal{C}$.
	
	First we claim that $\lambda_+^2\partial_ev_+\leq\lambda_-^2\partial_ev_-$ on $\mathcal{C}$, and then a symmetric argument yields $\lambda_+^2\partial_ev_+\geq\lambda_-^2\partial_ev_-$ on $\mathcal{C}$, which leads to the conclusion.
	
	Suppose that there are $p,q\in\mathbb{R}$ with $\lambda_+^2p-\lambda_-^2q>0$ and a strictly subharnomic function $\bar{P}$ with $\partial_e\bar{P}=0$ such that
	$$P=p(x\cdot\bm{e})^+-q(x\cdot\bm{e})^-+\bar{P}$$
	touches $v_{\pm}$ strictly from below at $x_0\in\mathcal{C}$.
	
	By lemma \ref{touch}, there is a sequence of $\{x_k\}\rightarrow x_0$, $x_k\in\partial\Omega_{u_k}$ and  a series of comparison functions $Q_k$ touching $u_k$ from below at $x_k$, such that
	\begin{equation*}
	\nabla Q_k^+(x_k)=y_2^0\alpha_k\bm{e}+y_2^0\alpha_k\epsilon_kp\bm{e}+O(\epsilon_k^2)
	\end{equation*}
	and
	\begin{equation*}
	\nabla Q_k^-(x_k)=-y_2^0\beta_k\bm{e}-y_2^0\beta_k\epsilon_kq\bm{e}+O(\epsilon_k^2).
	\end{equation*}
	In particular, $P$ touches $v$ from below we have $q>0$ and thus $p>0$, which implies $x_k\notin\partial\Omega_{u_k}^-\backslash\partial\Omega_{u_k}^+$. It is remained to discuss the cases for $x_k\in\Gamma_{\text{op}}^-$ and $x_k\in\Gamma_{\text{tp}}$.
	
	Case 1. $x_k\in\partial\Omega_{u_k}^+\backslash\partial\Omega_{u_k}^-$.
	
	The definition of the viscosity solution gives
	\begin{equation*}
	\begin{aligned}
	(y_2^0+r_kx_2)^2\lambda_+^2 &\geq|\nabla Q_k^+(x_k)|^2 \\
	&=(y_2^0\alpha_k)^2+2(y_2^0\alpha_k)^2\epsilon_kp+O(\epsilon_k^2).
	\end{aligned}
	\end{equation*}
	This together with $r_k=O(\epsilon_k^2)$ implies
	$$\lambda_+^2p+l=\lambda_+^2\lim_{k\rightarrow\infty}\left(p+\frac{\alpha_k^2-\lambda_+^2}{2\alpha_k^2\epsilon_k}\right)\leq\lambda_+^2\lim_{k\rightarrow\infty}\frac{r_kx_2(2y_2^0+r_kx_2)\lambda_+^2}{2(y_2^0\alpha_k)^2\epsilon_k}=0,$$
	in contradiction with the fact $\lambda_+^2p+l>0$.
	
	Case 2. $x_k\in\partial\Omega_{u_k}^+\cap\partial\Omega_{u_k}^-$.
	
	In this case
	\begin{equation*}
	\begin{aligned}
	(y_2^0+r_kx_2)^2(\lambda_+^2-\lambda_-^2) &\geq|\nabla Q_k^+(x_k)|^2-|\nabla Q_k^-(x_k)|^2 \\
	&=(y_2^0)^2(\alpha_k^2-\beta_k^2)+2(y_2^0)^2\epsilon_k(\alpha_k^2p-\beta_k^2q)+O(\epsilon_k^2).
	\end{aligned}
	\end{equation*}
	Combined with the condition $r_k=O(\epsilon_k^2)$, it yields
	$$\frac{r_kx_2(2y_2^0+r_kx_2)(\lambda_+^2-\lambda_-^2)}{2(y_2^0)^2\epsilon_k}\geq\alpha_k^2p-\beta_k^2q+O(\epsilon_k^2),$$
	and thus
	$$0\geq\lambda_+^2p-\lambda_-^2q,$$
	in contradiction with the assumption $\lambda_+^2p-\lambda_-^2q>0$.
	
	This completes the proof.
\end{proof}

\subsubsection{Flatness decay}

This subsection follows as in \cite{PSV21} to get the improvement of flatness at branch point in a standard way. We sketch the key argument here.

\begin{prop}  \label{flat1}
	(Improvement of flatness: branch points)
	For every $L\geq\lambda_+\geq\lambda_->0$, $\gamma\in(0,1/2)$ and any $M>0$, there exist $\epsilon_1$, $C_1$ and $\rho\in(0,1/4)$ depending on $\gamma, L$ such that the following holds.
	
	Suppose that $u_k$ is a blow-up of the minimizer $u$ for $k$ large, and $0$ is a two-phase free boundary point of $u_k$. If
	$$\Vert u_k-H_{\alpha_k,\bm{e}} \Vert_{L^\infty(B_1)}\leq\epsilon_1$$
	with $0\leq\alpha_k-\lambda_+\leq M\Vert u-H_{\alpha_k,\bm{e}} \Vert_{L^\infty(B_1)}$, then there exist a unit vector $\bm{e}_k$ and a constant $\tilde{\alpha}_k\geq\lambda_+$ such that
	$$|\bm{e}_k-\bm{e}|+|\tilde{\alpha}_k-\alpha_k|\leq C_1\Vert u_k-H_{\alpha_k,\bm{e}}\Vert_{L^\infty(B_1)}$$
	and
	$$\Vert u_{\rho,k}-H_{\tilde{\alpha}_k,\bm{e}_k}\Vert_{L^\infty(B_1)}\leq\rho^\gamma\Vert u_k-H_{\alpha_k,\bm{e}}\Vert_{L^\infty(B_1)},$$
	where $u_{\rho,k}:=\frac{u_k(\rho x)}{\rho}=\frac{u(y_0+r_k\rho x)}{r_k\rho}$.
\end{prop}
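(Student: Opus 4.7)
The plan is to argue by contradiction in the De Silva--De Philippis--Spolaor--Velichkov improvement-of-flatness scheme. Assume the conclusion fails for some triple $(\gamma, L, M)$: then there exist a sequence of blow-ups $u_k$ of $u$ at some $y_0 \in \Gamma_{\mathrm{tp}}$ with blow-up radii $r_k$, and two-plane solutions $H_{\alpha_k, \bm{e}}$ (after passing to a subsequence we fix the direction $\bm{e}$), such that $\epsilon_k := \Vert u_k - H_{\alpha_k, \bm{e}} \Vert_{L^\infty(B_1)} \to 0$, $\lambda_+ \leq \alpha_k \leq L$, and $\alpha_k - \lambda_+ \leq M\epsilon_k$, yet for every admissible pair $(\bm{e}_k, \tilde{\alpha}_k)$ with $|\bm{e}_k - \bm{e}| + |\tilde{\alpha}_k - \alpha_k| \leq C_1 \epsilon_k$ the improvement $\Vert u_{\rho, k} - H_{\tilde{\alpha}_k, \bm{e}_k} \Vert_{L^\infty(B_1)} \leq \rho^\gamma \epsilon_k$ fails, for any candidate pair $(C_1, \rho)$. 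The hypothesis $\alpha_k - \lambda_+ = O(\epsilon_k)$, combined with the relation $\alpha_k^2 - \beta_k^2 = \lambda_+^2 - \lambda_-^2$ which forces $\beta_k - \lambda_- = O(\epsilon_k)$, places us in the branch regime $0 \leq l < \infty$ of Proposition \ref{lim1}, and I would also arrange $r_k = O(\epsilon_k^2)$ along this subsequence as in \eqref{rk}.

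Next I would introduce the linearizing sequence $v_{\pm, k}$ from \eqref{vk} and apply Proposition \ref{comp} to extract (along a further subsequence) H\"older continuous limits $v_\pm$ on $\overline{B_{1/2} \cap \{(x \cdot \bm{e})^\pm > 0\}}$, converging uniformly on compacts strictly to one side of $\{x \cdot \bm{e} = 0\}$. Proposition \ref{lim1} identifies $v = v_+ + v_-$ as a viscosity solution of the two-membrane system \eqref{tm}. Since $0$ is a two-phase free boundary point of every $u_k$, Corollary \ref{cor1} forces $0 \in \mathcal{C}$ and $v_\pm(0) = 0$.

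The analytic core is the boundary regularity theorem for the two-membrane problem quoted in Appendix F, applied at $0 \in \mathcal{C}$. It yields a common tangential slope $\mu \in \mathbb{R}$ in the direction $\bm{e}^\perp$ orthogonal to $\bm{e}$, and normal slopes $\nu_\pm$ obeying $\lambda_+^2 \nu_+ = \lambda_-^2 \nu_-$, together with the expansion
\[
v_\pm(x) = \mu (x \cdot \bm{e}^\perp) + \nu_\pm (x \cdot \bm{e}) + O(|x|^{1+\theta})
\]
on $\{(x \cdot \bm{e})^\pm > 0\} \cap B_\rho$, for some $\theta \in (\gamma, 1)$ depending only on the structural constants. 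I would then define
\[
\bm{e}_k := \frac{\bm{e} + \epsilon_k \mu\, \bm{e}^\perp}{|\bm{e} + \epsilon_k \mu\, \bm{e}^\perp|}, \qquad \tilde{\alpha}_k := \alpha_k(1 + \epsilon_k \nu_+),
\]
and Taylor-expand
\[
y_2^0 \tilde{\alpha}_k (x \cdot \bm{e}_k) = y_2^0 \alpha_k (x \cdot \bm{e}) + y_2^0 \alpha_k \epsilon_k \bigl(\mu (x \cdot \bm{e}^\perp) + \nu_+ (x \cdot \bm{e})\bigr) + O(\epsilon_k^2).
\]
The induced $\tilde{\beta}_k$ determined by $\tilde{\alpha}_k^2 - \tilde{\beta}_k^2 = \lambda_+^2 - \lambda_-^2$ matches the analogous expansion on the negative side precisely because of the matching relation $\lambda_+^2 \nu_+ = \lambda_-^2 \nu_-$. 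Substituting the definition \eqref{vk} and the rescaling $u_{\rho, k}(x) = u_k(\rho x)/\rho$ then yields
\[
\Vert u_{\rho, k} - H_{\tilde{\alpha}_k, \bm{e}_k} \Vert_{L^\infty(B_1)} \leq C \rho^\theta \epsilon_k + o(\epsilon_k),
\]
and fixing $\rho$ small so that $C \rho^{\theta - \gamma} \leq 1/2$ delivers the improvement for all large $k$, in contradiction with the failure assumption. The bound $|\bm{e}_k - \bm{e}| + |\tilde{\alpha}_k - \alpha_k| \leq C_1 \epsilon_k$ is automatic from the definitions, with $C_1$ controlled by $|\mu| + |\nu_+|$ and hence by $\Vert v \Vert_{L^\infty}$ via the interior estimate.

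The main obstacle I anticipate is the boundary $C^{1,\theta}$ regularity of $v_\pm$ at the contact point $0 \in \mathcal{C}$ in the presence of a transmission condition which switches from the Neumann-type $\lambda_\pm^2 \partial_e v_\pm + l = 0$ on $\mathcal{J}$ to the matching $\lambda_+^2 \partial_e v_+ = \lambda_-^2 \partial_e v_-$ on $\mathcal{C}$; extracting the quantitative first-order expansion at $0$ hinges on the exact formulation of the Appendix F theorems. A secondary technical point is that the rescaled operator $\mathcal{L}_k = \Delta - \tfrac{r_k}{y_2^0 + r_k x_2} \partial_2$ carries an $r_k$-dependent drift and that the viscosity boundary conditions themselves involve the $r_k$-dependent factors $(y_2^0 + r_k x_2)$, so the normalization $r_k = O(\epsilon_k^2)$ from \eqref{rk} must be invoked consistently to ensure that these perturbations stay of size $o(\epsilon_k)$ after dividing by $\epsilon_k$ and passing to the limit.
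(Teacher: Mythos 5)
Your proposal is correct and follows essentially the same route as the paper's proof: a contradiction argument, reduction to the branch regime $0\leq l<\infty$ via $\alpha_k-\lambda_+\leq M\epsilon_k$, compactness of the linearizing sequence (Proposition \ref{comp}), identification of the limit as a two-membrane problem (Proposition \ref{lim1}), the Appendix F expansion with matching condition $\lambda_+^2p=\lambda_-^2q$, and the same choices $\bm{e}_k=\frac{\bm{e}+\epsilon_k t\bm{e}^\perp}{\sqrt{1+\epsilon_k^2t^2}}$, $\tilde{\alpha}_k=\alpha_k(1+\epsilon_k p)$ (the paper adds a small correction $\delta_k\epsilon_k$ to guarantee $\tilde{\alpha}_k\geq\lambda_+$, which you should also include). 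Your remarks on the induced $\tilde{\beta}_k$ and on invoking $r_k=O(\epsilon_k^2)$ to control the $r_k$-dependent drift and boundary terms match the paper's treatment.
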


\begin{proof}
	We argue by contradiction. Assume for $\Vert u_k-H_{\alpha_k,\bm{e}} \Vert_{L^\infty(B_1)}\leq \epsilon_k\rightarrow0$ and $0\leq\alpha_k-\lambda_+\leq M\epsilon_k$, we have that for any $\bm{e}_k\in\partial B_1$ and any $\tilde{\alpha}_k\geq\lambda_+$, there is a $\gamma\in(0,1/2)$ such that either
	$$|\bm{e}_k-\bm{e}|+|\tilde{\alpha}_k-\alpha_k|> C_1\Vert u_k-H_{\alpha_k,\bm{e}}\Vert_{L^\infty(B_1)}$$
	or
	$$\Vert u_{\rho,k}-H_{\tilde{\alpha}_k,\bm{e}_k}\Vert_{L^\infty(B_1)}>\rho^\gamma\Vert u_k-H_{\alpha_k,\bm{e}}\Vert_{L^\infty(B_1)}$$
	for any choice of $\rho\in(0,1/4)$ and $C_1$.
	
	Recall that $\frac{\alpha_k-\lambda_+}{\epsilon_k}\leq M$, the definition of $l$ gives $l\leq M\lambda_+.$
	
	Let $v_k$ be the sequence of functions defined as in (\ref{vk}), $v_k\rightarrow v$ and $\Vert v \Vert_{L^\infty(B_{1/2})}\leq1$. Then $v$ solves a two-membrane problem and thus using the regularity theorem in Appendix E.1, there exist $t\in\mathbb{R}$ and $p, q\in\mathbb{R}$ satisfying $\lambda_+^2p=\lambda_-^2q\geq-l$, such that for all $r\in(0,1/4)$,
	$$\sup_{B_r}\frac{\left|v(x)-v(0)-[t(x\cdot\bm{e}^\perp)+p(x\cdot\bm{e})^+-q(x\cdot\bm{e})^-]\right|}{r^{3/2}}\leq C(\lambda_{\pm}, M).$$
	For $\gamma\in(0,1/2)$, take small $r$ and $\rho$ depending on $C$ and $\gamma$ that $\rho<r$. Then
	$$\sup_{B_\rho}\left|v(x)-0-[t(x\cdot\bm{e}^\perp)+p(x\cdot\bm{e})^+-q(x\cdot\bm{e})^-]\right|\leq \frac{1}{y_2^0\alpha_k}\rho^{\gamma+1}.$$
	Recall the definition of $v$ we have
	\begin{equation*}
	\begin{cases}
	\frac{u_k(x)-y_2^0\alpha_k(x\cdot\bm{e})^+}{y_2^0\alpha_k\epsilon_k}-[t(x\cdot\bm{e}^\perp)+p(x\cdot\bm{e})^+-q(x\cdot\bm{e})^-]\leq \frac{1}{y_2^0\alpha_k}\rho^{\gamma+1} \quad \text{in} \quad \{u_k>0\}\cap B_\rho, \\
	\frac{u_k(x)+y_2^0\beta_k(x\cdot\bm{e})^-}{y_2^0\beta_k\epsilon_k}-[t(x\cdot\bm{e}^\perp)+p(x\cdot\bm{e})^+-q(x\cdot\bm{e})^-]\leq \frac{1}{y_2^0\alpha_k}\rho^{\gamma+1} \quad \text{in} \quad \{u_k<0\}\cap B_\rho.
	\end{cases}
	\end{equation*}
	Now set $\tilde{\alpha}_k:=\alpha_k(1+\epsilon_kp)+\delta_k\epsilon_k$ and $\bm{e}_k:=\frac{\bm{e}+\epsilon_kt\bm{e}^\perp}{\sqrt{1+\epsilon_k^2t^2}}$, where $\delta_k\rightarrow0$ is chosen such that $\tilde{\alpha}_k\geq\lambda_+$.
	
	Combining this with
	$$|\tilde{\alpha}_k-\alpha_k|=\epsilon_kp\alpha_k+\delta_k\epsilon_k\leq C_1\epsilon_k$$
	and
	$$|\bm{e}_k-\bm{e}|=|\epsilon_kt\bm{e}^\perp+o(\epsilon_k)|\leq C_1\epsilon_k,$$
	one can easily get
	$$|\tilde{\alpha}_k-\alpha_k|+|\bm{e}_k-\bm{e}|\leq C_1\epsilon_k$$
	for a constant $C_1$ independent of $k$.
	
	We claim that
	$$\Vert u_{\rho,k}-H_{\tilde{\alpha}_k,\bm{e}_k}\Vert_{L^\infty(B_1)}\leq\rho^\gamma\epsilon_k.$$
	In fact, for
	\begin{equation*}
	H_k:=
	\begin{cases}
	\frac{H_{\tilde{\alpha}_k,\bm{e}_k}-H_{\alpha_k,\bm{e}}}{y_2^0\alpha_k\epsilon_k} \quad \text{for} \quad x\cdot\bm{e}>0, \\
	\frac{H_{\tilde{\alpha}_k,\bm{e}_k}-H_{\alpha_k,\bm{e}}}{y_2^0\beta_k\epsilon_k} \quad \text{for} \quad x\cdot\bm{e}<0,
	\end{cases}
	\end{equation*}
	it is straightforward in $\{x\cdot\bm{e}>0\}$ that
	\begin{equation*}
	\begin{aligned}
	H_k &=\frac{y_2^0\tilde{\alpha}_k(x\cdot\bm{e}_k)^+ - y_2^0\tilde{\beta}_k(x\cdot\bm{e}_k)^- - y_2^0\alpha_k(x\cdot\bm{e})^+}{y_2^0\alpha_k\epsilon_k} \\
	&=\frac{\tilde{\alpha}_k\left(x\cdot\frac{\bm{e}}{\sqrt{1+\epsilon_k^2t^2}}\right)-\alpha_k(x\cdot\bm{e})+\tilde{\alpha}_k\left(x\cdot\frac{\epsilon_kt\bm{e}}{\sqrt{1+\epsilon_k^2t^2}}\right)}{\alpha_k\epsilon_k} \quad ( \text{for sufficiently large} \ k)\\
	&=\frac{\left(\alpha_k(1+\epsilon_kp)\right)\left(x\cdot\frac{\bm{e}}{\sqrt{1+\epsilon_k^2t^2}}\right)-\alpha_k(x\cdot\bm{e})+\left(\alpha_k(1+\epsilon_kp)\right)\left(x\cdot\frac{\epsilon_kt\bm{e}}{\sqrt{1+\epsilon_k^2t^2}}\right)+O(\delta_k\epsilon_k)}{\alpha_k\epsilon_k} \\
	&\rightarrow p(x\cdot\bm{e})^++t(x\cdot\bm{e}^\perp),
	\end{aligned}
	\end{equation*}
	and similarly
	$$H_k\rightarrow -q(x\cdot\bm{e})^-+t(x\cdot\bm{e}^\perp) \quad \text{in} \quad \{x\cdot\bm{e}<0\}.$$
	Hence
	$$H_k\rightarrow p(x\cdot\bm{e})^+-q(x\cdot\bm{e})^-+t(x\cdot\bm{e}^\perp),$$
	and we get
	$$\lim_{k\rightarrow\infty}\left|\frac{u_k(x)-H_{\alpha_k,\bm{e}}}{y_2^0\alpha_k\epsilon_k}-H_k\right|\leq \frac{1}{y_2^0\alpha_k}\rho^{\gamma+1} \quad \text{in} \quad B_\rho.$$
	Therefore
	$$|u_{\rho,k}(x)-H_{\tilde{\alpha}_k,\bm{e}_k}(x)|\leq\rho^\gamma\epsilon_k \quad \text{in} \quad B_1,$$
	a contradiction with our assumption. Thus the improvement of flatness is verified.
	
\end{proof}

\subsection{The case for non-branch points}

\subsubsection{Compactness} 
In this case Proposition \ref{comp} (The compactness of $v_k$) and Corollary \ref{cor1} still hold, and the proofs follow in a similar manner as in 3.1.1.  The arguments for non-branch case can also be found in \cite{SFS14}, so we just give the partial boundary Harnack lemma and omit the details here.

As in Section 3.1, thanks to the fact that $\Vert u_k-H_{\alpha_k,\bm{e}} \Vert_{L^\infty(B_1)}\leq\epsilon_k$ we know that
$$H_{\alpha_k,\bm{e}}(x-\frac{\epsilon_k}{y_2^0\beta_k}\bm{e})\leq u_k\leq H_{\alpha_k,\bm{e}}(x+\frac{\epsilon_k}{y_2^0\beta_k}\bm{e}).$$

\begin{lem} \label{pbh2}
	(Partial Boundary Harnack for non-branch case)
	Suppose that $\{u_k\}$ is a blow-up sequence of $u$ and L be the uniform Lipschitz constant. Then there exist constants $\bar{\epsilon}=\bar{\epsilon}(\lambda_{\pm},L)$, $M=M(\lambda_{\pm},L)$ and $\tilde{c}=\tilde{c}(\lambda_{\pm},L)\in(0,1)$ such that the following property holds.
	
	If
	$$H_{\alpha_k,\bm{e}}(x+b_0\bm{e})\leq u_k\leq H_{\alpha_k,\bm{e}}(x+a_0\bm{e}) \quad \text{in} \quad B_4$$
	for $a_0, b_0\in(-1/10,1/10)$ with $0\leq a_0-b_0\leq\bar{\epsilon}$ and for $\lambda_++M\epsilon\leq\alpha_k\leq2L$ with small $\epsilon$, then there exist $a_1,b_1\in(-1/10,1/10)$ with $0\leq a_1-b_1\leq\tilde{c}(a_0-b_0)$ such that for $x\in B_{1/6}$,
	\begin{equation} \label{f4}
	H_{\alpha_k,\bm{e}}(x+b_1\bm{e})\leq u_k\leq H_{\alpha_k,\bm{e}}(x+a_1\bm{e}).
	\end{equation}
	In particular, for $\epsilon_k$ defined as before, let $k$ be sufficiently large we have $a_0=\frac{\epsilon_k}{y_2^0\beta_k}$, $b_0=-\frac{\epsilon_k}{y_2^0\beta_k}$ and $a_1, b_1$ with $a_1-b_1\leq\bar{c}\frac{2\epsilon_k}{y_2^0\beta_k}$ satisfying (\ref{f4}).
\end{lem}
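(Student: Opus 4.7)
The plan is to run the sliding-barrier argument of Lemma \ref{pbh1} on the \emph{full} two-plane function rather than on $u_k^+$ and $u_k^-$ separately. The strict quantitative gap $\alpha_k\geq\lambda_++M\epsilon$ characterizing the non-branch regime guarantees, via $\alpha_k^2-\beta_k^2=\lambda_+^2-\lambda_-^2$, the companion gap $\beta_k\geq\lambda_-+M'\epsilon$, and this uniform separation from the critical Bernoulli slopes lets a single perturbation of the two-plane slopes produce a strict contradiction at every possible free-boundary touching point. The overall scheme (dichotomy at a reference point, bump function $\phi$ from (\ref{phi}), sliding barrier, strong maximum principle, viscosity boundary analysis) parallels Lemma \ref{pbh1}, but the case analysis is considerably simpler since the branch-case subcases collapse into a single sliding step.

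Setting $\epsilon:=a_0-b_0$, $P:=2\bm e$ and $Q:=\bm e/5$, I treat the improvement from below (where $u_k(P)$ lies closer to $H_{\alpha_k,\bm e}(P+a_0\bm e)$; the other case is symmetric). The interior Harnack inequality for the non-negative $\mathcal L_k$-supersolution $u_k-H_{\alpha_k,\bm e}(\cdot+b_0\bm e)$ yields a gap $u_k\geq H_{\alpha_k,\bm e}(\cdot+b_0\bm e)+c\epsilon$ on $B_{1/20}(Q)$. Introduce the comparison family
\[
f_t(x)=y_2^0\alpha_k(1+\tau\epsilon)\bigl(x\cdot\bm e+b_0+\bar c\epsilon\phi(x)t\bigr)^+-y_2^0\beta_k\bigl(x\cdot\bm e+b_0+\bar c\epsilon\phi(x)t\bigr)^-,
\]
with $\tau,\bar c$ small universal constants calibrated so that $f_t\leq u_k$ on $B_{1/20}(Q)$ for every $t\in[0,1]$ and $f_t\leq H_{\alpha_k,\bm e}(\cdot+b_0\bm e)\leq u_k$ on $B_1\setminus B_{1/20}(Q)$. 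Let $\bar t\in[0,1]$ be the largest $t$ with $f_t\leq u_k$ in $B_1$, and assume $\bar t<1$ for contradiction. A direct computation using $\mathcal L_k\phi>0$ and $r_k=O(\epsilon_k^2)\ll\epsilon$ gives $\mathcal L_k f_{\bar t}>0$ on $\{f_{\bar t}\neq0\}\cap\{0<\phi<1\}$, so the strong maximum principle forces the touching point $\bar x$ onto $\partial\Omega_{u_k}^+\cup\partial\Omega_{u_k}^-$.

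The viscosity analysis at $\bar x$ exhausts three sub-cases. If $\bar x\in\Gamma_{\text{tp}}$, the jump $(y_2^0)^2[\alpha_k^2(1+\tau\epsilon)^2-\beta_k^2]=(y_2^0)^2(\lambda_+^2-\lambda_-^2)+2\tau\epsilon\alpha_k^2(y_2^0)^2+O(\epsilon^2)$ strictly exceeds $(y_2^0+r_k\bar x_2)^2(\lambda_+^2-\lambda_-^2)$ since $r_k=O(\epsilon^2)$, violating (A.3); if $\bar x\in\Gamma_{\text{op}}^+$, then $|\nabla f_{\bar t}^+(\bar x)|=y_2^0\alpha_k(1+\tau\epsilon)$ strictly exceeds $(y_2^0+r_k\bar x_2)\lambda_+$, violating (A.1); and $\bar x\notin\Gamma_{\text{op}}^-$ by the sign argument that $f_{\bar t}\leq u_k\equiv0$ on the dry side would force $f_{\bar t}^+\equiv0$ locally, incompatible with the positive $+$-slope of $f_{\bar t}$. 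Hence $\bar t=1$, and $\phi\geq c_0>0$ on $B_{1/6}$ upgrades to $b_1:=b_0+\bar c_0\epsilon$, $a_1:=a_0$ satisfying (\ref{f4}) with $a_1-b_1\leq\tilde c\epsilon$. In the symmetric improvement from above, the perturbation is $\tilde\alpha=\alpha_k(1-\tau\epsilon)$, $\tilde\beta=\beta_k$, the roles of $\Gamma_{\text{op}}^\pm$ in the sign argument swap, and the viscosity contradiction at $\bar x\in\Gamma_{\text{op}}^-$ requires precisely $|\nabla f^-|=y_2^0\beta_k>(y_2^0+r_k\bar x_2)\lambda_-$, which is where the hypothesis $\alpha_k\geq\lambda_++M\epsilon$ (hence $\beta_k-\lambda_-\geq M'\epsilon$) enters crucially.

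The principal technical obstacle is the joint calibration of $\tau,\bar c$ and $M$: $\tau$ must be small enough that Harnack initializes $f_0\leq u_k$ in $B_{1/20}(Q)$, yet large enough that the $\Gamma_{\text{tp}}$ jump surplus dominates the $O(r_k)$ error from the $y_2$-weight; $\bar c$ must keep the slid barrier below $H_{\alpha_k,\bm e}(\cdot+b_0\bm e)$ outside $B_{1/20}(Q)$, yet yield a definite $\epsilon$-fraction of final improvement at $\bar t=1$; and $M=M(\lambda_\pm,L)$ must render $\beta_k-\lambda_-\geq M'\epsilon$ strong enough to dominate the $r_k$-induced error in the $\Gamma_{\text{op}}^-$ contradiction of the symmetric case. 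Once these interlocked parameters are tuned, the contraction factor $\tilde c<1$ results directly from the fixed $\bar c_0\epsilon$ gain inside $B_{1/6}$.
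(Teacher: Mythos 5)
Your proposal is correct and takes essentially the approach the paper intends: the paper omits the proof of Lemma \ref{pbh2} entirely, deferring to the branch-case argument of Lemma \ref{pbh1} and to \cite{SFS14}, and your sliding-barrier scheme --- a single two-plane comparison function, the strict surplus in the jump condition at $\Gamma_{\rm tp}$, the sign argument excluding the ``wrong'' one-phase boundary, and the use of $\alpha_k-\lambda_+\geq M\epsilon$ (hence $\beta_k-\lambda_-\gtrsim M\epsilon$) to contradict the remaining one-phase viscosity condition --- is exactly that adaptation, with the $O(r_k)=O(\epsilon_k^2)$ errors from $\mathcal{L}_k$ and the $x_2$-dependent Bernoulli constants absorbed as in Lemma \ref{pbh1}. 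Your closing remarks on calibrating $\tau,\bar c, M$ correctly identify the only delicate points.
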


\subsubsection{The linearized problem}
As in 3.1.2 we set
\begin{equation*}
v=v_++v_-
\end{equation*}
after proper extension of $v_{\pm}$, and we require the additional assumption $r_k=O(\epsilon_k^2)$ to get over the technical difficulty. Moreover, we are dealing with a more general case with arbitrary $\bm{e}$ than in \cite{PSV21} with special $\bm{e}=(0,1)$. Here the free boundaries $\partial\{u_0>0\}\cup\partial\{u_0<0\}$ for the blow-up limit $u_0$ at an interior point will include only two-phase points, see Figure 11.

\begin{figure}[!h]
	\includegraphics[width=130mm]{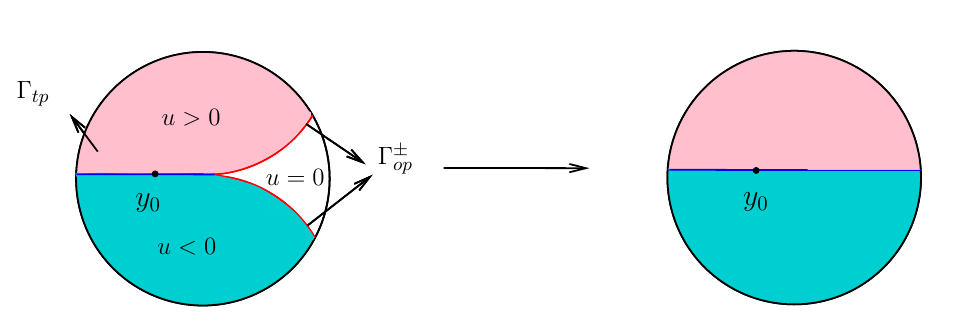}
	\caption{The blow-up at the non-branch point.}
\end{figure}

\begin{prop}  \label{lim2}
	(The limit linearized problem for $l=\infty$)
	In the case $l=\infty$, $v$ solves a "transmission problem":
	\begin{equation} \label{t}
	\begin{cases}
	\Delta v_{\pm}=0 \qquad\qquad\quad \text{in} \quad \{(x\cdot\bm{e})^{\pm}>0\}\cap B_{1/2},\\
	\alpha_{\infty}^2\partial_ev_+ = \beta_{\infty}^2\partial_ev_- \quad \text{on} \quad \{x\cdot \bm{e}=0\}\cap B_{1/2},
	\end{cases}
	\end{equation}
	where $\alpha_{\infty}=\lim_{k\rightarrow\infty}\alpha_k, \beta_{\infty}=\lim_{k\rightarrow\infty}\beta_k$ and $\partial_e$ denotes the derivative in the direction $\bm{e}$. Moreover, $\mathcal{J}=\varnothing$ and $\{x\cdot \bm{e}=0\}\cap B_{1/2}=\mathcal{C}$ in this case.
\end{prop}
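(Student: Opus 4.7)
\textbf{Proof proposal for Proposition \ref{lim2}.} The plan is to mirror the strategy used for the branch case (Proposition \ref{lim1}), using the touching lemma (Lemma \ref{touch} in Appendix E) to transfer any comparison function for $v$ into comparison functions $Q_k$ for $u_k$, and then reading off the claimed PDE/transmission conditions from the viscosity conditions satisfied by $u_k$. The simplifications are that (i) there are no one-phase free boundary points to worry about in the non-branch regime, and (ii) the Bernoulli constant $l$ is now infinite, which makes the one-sided inequalities collapse to equalities. The asymptotic bookkeeping relies crucially on the normalization $r_k=O(\epsilon_k^2)$ imposed in (\ref{rk}).

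First I would dispose of the set $\mathcal{J}$ and the harmonicity. Since $x_0$ is a non-branch two-phase point, $\{u=0\}$ has zero Lebesgue density near $x_0$, so after rescaling the free boundaries $\partial\Omega_{u_k}^\pm\cap B_1$ coincide up to null sets, and the non-degeneracy in Proposition \ref{prop2}(1) forces every free boundary point of $u_k$ in a compact subset of $B_1$ to be a two-phase point once $k$ is large. By Corollary \ref{cor1}, any point of $\mathcal{J}$ would have a uniform distance to such two-phase points, which is a contradiction, so $\mathcal{J}=\varnothing$ and $\{x\cdot\bm{e}=0\}\cap B_{1/2}=\mathcal{C}$. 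For the harmonicity, on $\Omega_{u_k}^{+}\cap B_1$ the function $u_k = y_2^0\alpha_k(x\cdot\bm e)^+ + y_2^0\alpha_k\epsilon_k\, v_{+,k}$ satisfies $\mathcal{L}_k u_k=0$, and
\begin{equation*}
\mathcal{L}_k v_{+,k} \;=\; \frac{r_k e_2}{\epsilon_k(y_2^0+r_k x_2)} \;=\; O(\epsilon_k),
\end{equation*}
since $r_k=O(\epsilon_k^2)$. Passing to the limit in the viscosity sense (using the uniform convergence of Proposition \ref{comp}(1) on compact subsets of $\{x\cdot\bm e>0\}\cap B_{1/2}$) gives $\Delta v_+=0$ there, and similarly for $v_-$.

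The core of the proof is the transmission condition $\alpha_\infty^2\partial_e v_+=\beta_\infty^2\partial_e v_-$ on $\mathcal{C}$. Suppose for contradiction that a comparison test function of the form $P=p(x\cdot\bm e)^+-q(x\cdot\bm e)^-+\bar P$, with $\bar P$ strictly subharmonic and $\partial_e\bar P=0$, touches $v=v_++v_-$ strictly from below at some $x_0\in\mathcal{C}$ with $\alpha_\infty^2 p-\beta_\infty^2 q>0$. By Lemma \ref{touch} there exist $x_k\to x_0$ with $x_k\in\partial\Omega_{u_k}^+\cap\partial\Omega_{u_k}^-$ and comparison functions $Q_k$ touching $u_k$ from below at $x_k$ whose gradients satisfy
\begin{equation*}
\nabla Q_k^+(x_k)=y_2^0\alpha_k\bm e+y_2^0\alpha_k\epsilon_k p\,\bm e+O(\epsilon_k^2),\qquad
\nabla Q_k^-(x_k)=-y_2^0\beta_k\bm e-y_2^0\beta_k\epsilon_k q\,\bm e+O(\epsilon_k^2).
\end{equation*}
The viscosity condition (A.3) together with $\alpha_k^2-\beta_k^2=\lambda_+^2-\lambda_-^2$ then yields
\begin{equation*}
2(y_2^0)^2\epsilon_k\bigl(\alpha_k^2 p-\beta_k^2 q\bigr) \;\leq\; (y_2^0+r_k x_{k,2})^2(\lambda_+^2-\lambda_-^2)-(y_2^0)^2(\lambda_+^2-\lambda_-^2)+O(\epsilon_k^2) \;=\; O(r_k)+O(\epsilon_k^2).
\end{equation*}
Dividing by $\epsilon_k$ and using $r_k=O(\epsilon_k^2)$ gives in the limit $\alpha_\infty^2 p-\beta_\infty^2 q\leq 0$, contradicting our assumption. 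A symmetric argument with touching from above produces the reverse inequality, hence the equality.

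The main technical obstacle is controlling the error terms that arise because the free boundary condition for $u_k$ carries the $k$-dependent factor $(y_2^0+r_k x_2)^2$ and because the reference direction $\bm{e}$ is arbitrary (the operator $\mathcal{L}_k$ does not simplify under rotation, cf.\ the preceding remarks). The identity $\alpha_k^2-\beta_k^2=\lambda_+^2-\lambda_-^2$ is what makes the leading $O(1)$ terms cancel exactly, so that after dividing by $\epsilon_k$ only the $O(\epsilon_k)$ and $O(r_k/\epsilon_k)$ remainders survive; the latter vanish precisely because of the normalization $r_k=O(\epsilon_k^2)$. Verifying that the Lemma \ref{touch} construction yields the claimed gradient expansions in this axisymmetric setting (so that the drift in $\mathcal{L}_k$ does not spoil the linearization) is the delicate step, but it parallels the branch case treated in the proof of Proposition \ref{lim1}.
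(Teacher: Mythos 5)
Your Step 2 (the transmission condition) is essentially the paper's argument, and the asymptotic bookkeeping there --- cancelling the $O(1)$ terms via $\alpha_k^2-\beta_k^2=\lambda_+^2-\lambda_-^2$ and killing the remainder via $r_k=O(\epsilon_k^2)$ --- is correct. The genuine gap is in how you dispose of the one-phase scenario, both when proving $\mathcal{J}=\varnothing$ and when asserting that the touching points $x_k$ produced by Lemma \ref{touch} lie on $\partial\Omega_{u_k}^+\cap\partial\Omega_{u_k}^-$. You derive both from the geometric statement that $x_0$ is a non-branch point, i.e.\ $|\{u=0\}\cap B_{r_0}(x_0)|=0$, concluding that all free boundary points of $u_k$ are two-phase points for large $k$. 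But the hypothesis of the proposition is $l=\infty$, not the non-branch property: in the only place the proposition is used (the contradiction argument of Proposition \ref{flat2}) the available information is $\alpha_k-\lambda_+\geq M_k\epsilon_k$ with $M_k\to\infty$, nothing more. The implication ``$l=\infty\Rightarrow$ the zero set near $x_0$ is null'' is asserted in the paper as a dichotomy but never proved, and your argument additionally needs a positive-density estimate for $\{u_k=0\}$ at one-phase points, which is also not established here. So your Step 1 rests on two facts that are not at your disposal.

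The paper closes both holes with a single mechanism that uses $l=\infty$ directly: if a comparison function with gradient $y_2^0\alpha_k\bm{e}+y_2^0\alpha_k\epsilon_k\,\partial_eP\,\bm{e}+O(\epsilon_k^2)$ touched $u_k^+$ from below at a one-phase point $x_k\in\partial\Omega_{u_k}^+\backslash\partial\Omega_{u_k}^-$, the one-phase viscosity condition would force $\partial_eP\leq\frac{\lambda_+^2-\alpha_k^2}{2\alpha_k^2\epsilon_k}+O(\epsilon_k)\rightarrow-l/\lambda_+^2=-\infty$, impossible for a fixed barrier. For $\mathcal{J}=\varnothing$ this is applied to the explicit strictly subharmonic polynomial $P=A\bigl[-|x\cdot\bm{e}^{\perp}-Y\cdot\bm{e}^{\perp}|^2+2(x\cdot\bm{e})^2\bigr]-B(x\cdot\bm{e})$ touching $v_+$ from below over a segment of $\mathcal{J}$, where Corollary \ref{cor1} guarantees the touching points are one-phase, so $-B\to-\infty$ is the contradiction. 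In Step 2 the same mechanism applied to $Q_k^+$ excludes $x_k\in\partial\Omega_{u_k}^+\backslash\partial\Omega_{u_k}^-$ (the case $x_k\in\partial\Omega_{u_k}^-\backslash\partial\Omega_{u_k}^+$ being excluded because $q>0$ and hence $p>0$, per Lemma \ref{touch}(3)), which is exactly the justification your sentence ``there exist $x_k\in\partial\Omega_{u_k}^+\cap\partial\Omega_{u_k}^-$'' is missing. Replacing your measure-theoretic Step 1 with this barrier argument repairs the proof; your interior harmonicity computation is fine as written.
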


\begin{proof}
	We divide the proof into 2 steps, using the touching lemma in Appendix E.
	
	\emph{Step 1.} We first show that $\mathcal{J}=\varnothing$, which means that $B_{1/2}\cap\{x\cdot\bm{e}=0\}=\mathcal{C}$.
	
	Assume not, then by the continuity of $v_{\pm}$ we know that the set $\mathcal{J}=\{v_+<v_-\}\subset B_{1/2}\cap\{x\cdot\bm{e}=0\}$ is relatively open. Define $\bm{e}^{\perp}$ to be the unit vector normal to $\bm{e}$, namely, $\bm{e}^{\perp}\cdot\bm{e}=0$. Without loss of generality suppose that there is a point $Y=(y_1,y_2)$ such that the segment
	$$(Y\cdot\bm{e}^{\perp}-\epsilon,Y\cdot\bm{e}^{\perp}+\epsilon)\subset\mathcal{J}$$
	for $Y\cdot\bm{e}=0$ and some $\epsilon\in\mathbb{R}$ small. See Figure 12.
	
	\begin{figure}[!h]
		\includegraphics[width=50mm]{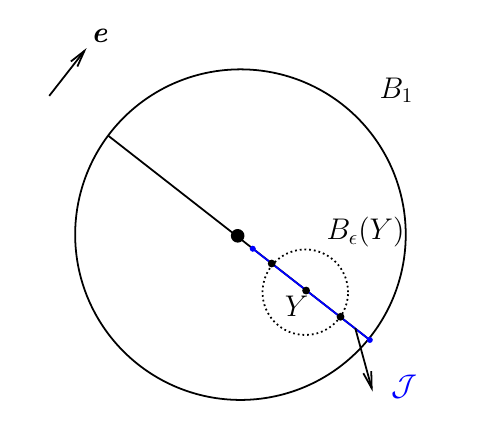}
		\caption{The figure of the ball $B_1$.}
	\end{figure}
	
	Recall that $\bm{e}=(e_1,e_2)$. Let $P(x)$ be the polynomial 
	\begin{equation*}
	\begin{aligned}
	P(x) &= A\bigl[ -|x\cdot\bm{e}^{\perp}-Y\cdot\bm{e}^{\perp}|^2+2(x\cdot\bm{e})^2 \bigr]-B(x\cdot\bm{e}) \\
	&=A\bigl[ -(-e_2 x_1+e_1 x_2+e_2 y_1-e_1 y_2)^2+2(e_1 x_1+e_2 x_2)^2 \bigr]-B(e_1 x_1+e_2 x_2)
	\end{aligned}
	\end{equation*}
	for $A, B\in\mathbb{R}$ to be determined. After calculating we have
	$$\partial_eP=2(x\cdot\bm{e})-B \quad \text{and} \quad \Delta P=2A>0.$$
	We first choose $A$ large enough such that $P<v^+$ on $\{|x\cdot\bm{e}^{\perp}-Y\cdot\bm{e}^{\perp}|=\epsilon\}\cap\{x\cdot\bm{e}=0\}$ and then choose $B$ larger to make sure $P<v^+$ on the ends of the mentioned segment, $i.e.$ on $\partial B_{\epsilon}(Y)\cap\{x\cdot\bm{e}=0\}$.
	
	Now translate $P(x)$ first down and then up to find a constant $C$ such that $P(x)+C$ touches $v^+$ from below at $x_0\in B_{\epsilon}(Y)\cap\{x\cdot\bm{e}\geq0\}$. By the strict subharmonicity of $P(x)$, we have $x_0\in B_{\epsilon}(Y)\cap\{x\cdot\bm{e}=0\}$.
	
	Utilizing Lemma \ref{touch} in Appendix E, there is a sequence of $\{x_k\}\rightarrow x_0$, $x_k\in\partial\Omega_{u_k}^+$ and a series of comparison functions $Q_k$ touching $u_k^+$ from below at $x_k$, such that
	$$\nabla Q_k^+(x_k)=y_2^0\alpha_k\bm{e}+y_2^0\alpha_k\epsilon_k\nabla P_+(x_0)+O(\epsilon^2).$$
	Combining with $x\in\mathcal{J}$, we know from $\liminf_{k\rightarrow\infty}dist(x,\partial\Omega_{u_k}^+\cap\partial\Omega_{u_k}^-)>0$ for $x\in\mathcal{J}$ that $x_k\in\partial\Omega_{u_k}^+\backslash\partial\Omega_{u_k}^-$. Then the definition of viscosity solutions gives
	\begin{equation*}
	\begin{aligned}
	(y_2^0+r_kx_2)^2\lambda_+^2 &\geq |\nabla Q_k^+(x_k)|^2 \\
	&=(y_2^0\alpha_k)^2+2(y_2^0\alpha_k)^2\epsilon_k\partial_eP(x_0)+O(\epsilon_k^2).
	\end{aligned}
	\end{equation*}
	Hence for $\alpha_k\geq\lambda_+$ and $r_k=O(\epsilon_k^2)$,
	\begin{equation*}
	\begin{aligned}
	\partial_eP(x_0)&=-B \\
	&\leq\frac{(y_2^0+r_kx_2)^2\lambda_+^2-(y_2^0\alpha_k)^2}{2(y_2^0\alpha_k)^2\epsilon_k}+O(\epsilon_k) \\
	&=\frac{\lambda_+^2-\alpha_k^2}{2\alpha_k^2\epsilon_k}+O(\epsilon_k) \\
	&\rightarrow -\frac{l}{\lambda_+^2}=-\infty.
	\end{aligned}
	\end{equation*}
	This contradiction implies $\mathcal{J}=\varnothing$.
	
	\emph{Step 2.} We next prove the transmission condition.
	
	Recall the optimal conditions, we need to verify the following facts
	\begin{equation} \label{3.4.1}
	\begin{cases}
	\alpha_{\infty}^2p-\beta_{\infty}^2q\leq 0 \quad \text{when} \ P \ \text{touches} \ v \ \text{from below}, \\
	\alpha_{\infty}^2p-\beta_{\infty}^2q\geq 0 \quad \text{when} \ P \ \text{touches} \ v \ \text{from above}.
	\end{cases}
	\end{equation}
	
	Suppose that there are $p, q\in\mathbb{R}$ with $\alpha_{\infty}^2p>\beta_{\infty}^2q$ and a strictly subharmonic function $\bar{P}(x)$ with $\partial_e\bar{P}(x)=0$ such that
	$$P(x)=p(x\cdot\bm{e})^+-q(x\cdot\bm{e})^-+\bar{P}(x)$$
	touches $v$ strictly from below at $x_0\in\{x\cdot\bm{e}=0\}\cap B_{1/2}$.
	By lemma \ref{touch} there is a sequence of $\{x_k=(x_{k,1},x_{k,2})\}\rightarrow x_0$, $x_k\in\partial\Omega_{u_k}$ and a series of comparison functions $Q_k$ touching $u_k$ from below at $x_k$,
	\begin{equation*}
	\begin{aligned}
	\nabla Q_k^+(x_k)&=y_2^0\alpha_k\bm{e}+y_2^0\alpha_k\epsilon_kp\bm{e}+O(\epsilon_k^2), \\
	\nabla Q_k^-(x_k)&=-y_2^0\beta_k\bm{e}-y_2^0\beta_k\epsilon_kq\bm{e}+O(\epsilon_k^2).
	\end{aligned}
	\end{equation*}
	In particular, $P$ touches $v$ from below and we have $q>0$ and thus $p>0$, which implies $x_k\notin\partial\Omega_{u_k}^-\backslash\partial\Omega_{u_k}^+$.
	
	Furthermore, we claim that $x_k\in\partial\Omega_{u_k}^-\cap\partial\Omega_{u_k}^+$. Otherwise $(y_2^0+r_kx_2)^2\lambda_+^2\geq|\nabla Q_k^+(x_k)|^2$, and we can reach a contradiction $p\rightarrow-\infty$ as above.
	
	Hence
	\begin{equation*}
	\begin{aligned}
	(y_2^0+r_kx_{k,2})^2(\lambda_+^2-\lambda_-^2) &\geq |\nabla Q_k^+(x_k)|^2-|\nabla Q_k^-(x_k)|^2 \\
	&=(y_2^0)^2(\alpha_k^2-\beta_k^2)+2(y_2^0)^2\epsilon_k(\alpha_k^2p-\beta_k^2q)+O(\epsilon_k^2)
	\end{aligned}
	\end{equation*}
	and we get that for $\alpha_k^2-\beta_k^2=\lambda_+^2-\lambda_-^2$ and $r_k=O(\epsilon_k^2)$,
	\begin{equation*}
	\begin{aligned}
	\alpha_k^2p-\beta_k^2q &\leq \frac{(\lambda_+^2-\lambda_-^2)^2-(\alpha_k^2-\beta_k^2)^2}{2\epsilon_k}+\frac{r_kx_2(2y_2^0+r_kx_2)(\lambda_+^2-\lambda_-^2)}{2(y_2^0)^2\epsilon_k}+O(\epsilon_k) \\
	&=O(\epsilon_k) \\
	&\rightarrow 0,
	\end{aligned}
	\end{equation*}
	a contradiction with $\alpha_{\infty}^2p>\beta_{\infty}^2q$. The second inequality in (\ref{3.4.1}) follows analogously.
\end{proof}

\subsubsection{Flatness decay}

\begin{prop}  \label{flat2}
	(Improvement of flatness: non-branch points)
	For every $L\geq\lambda_+\geq\lambda_->0$ and $\gamma\in(0,1/2)$, there exist $\epsilon_2$, $C_2$, $\bar{M}$ and $\rho\in(0,1/4)$ depending on $\gamma, L$ such that the following holds.
	
	Suppose that $u_k$ is a blow-up of the minimizer $u$ for $k$ large, and $0$ is a two-phase free boundary point of $u_k$. If
	$$\Vert u_k-H_{\alpha_k,\bm{e}} \Vert_{L^\infty(B_1)}\leq\epsilon_2$$
	with $\alpha_k-\lambda_+\geq \bar{M}\Vert u-H_{\alpha_k,\bm{e}} \Vert_{L^\infty(B_1)}$, then there exists a unit vector $\bm{e}_k$ and a constant $\tilde{\alpha}_k\geq\lambda_+$ such that
	$$|\bm{e}_k-\bm{e}|+|\tilde{\alpha}_k-\alpha_k|\leq C_2\Vert u_k-H_{\alpha_k,\bm{e}}\Vert_{L^\infty(B_1)}$$
	and
	$$\Vert u_{\rho,k}-H_{\tilde{\alpha}_k,\bm{e}_k}\Vert_{L^\infty(B_1)}\leq\rho^\gamma\Vert u_k-H_{\alpha_k,\bm{e}}\Vert_{L^\infty(B_1)},$$
	where $u_{\rho,k}:=\frac{u_k(\rho x)}{\rho}=\frac{u(y_0+r_k\rho x)}{r_k\rho}$.
	
\end{prop}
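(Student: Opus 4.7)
The plan is to mirror the contradiction scheme of Proposition \ref{flat1}, replacing its two-membrane limit problem by the transmission problem of Proposition \ref{lim2}. Suppose the statement fails; then for some $\gamma\in(0,1/2)$ there is a sequence of blow-ups $u_k$ with $0\in\Gamma_{\text{tp}}$ satisfying $\epsilon_k:=\Vert u_k-H_{\alpha_k,\bm{e}}\Vert_{L^\infty(B_1)}\to 0$ and $\alpha_k-\lambda_+\geq \bar{M}_k\epsilon_k$ with $\bar{M}_k\to\infty$. Since $l$ in \eqref{l} is $\lambda_+\lim_k(\alpha_k-\lambda_+)/\epsilon_k$, this forces $l=\infty$, placing us in the non-branch regime of Section 3.2, and by assumption for every candidate $(\tilde\alpha_k,\bm{e}_k)$, every $\rho\in(0,1/4)$ and every $C_2$ at least one of the two conclusions fails.

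Next I would extract the linearized limit. Lemma \ref{pbh2} together with Proposition \ref{comp} (in the $l=\infty$ regime) gives a subsequential H\"older limit $v=v_++v_-$ in $B_{1/2}$, and by Proposition \ref{lim2} this $v$ solves the transmission problem \eqref{t} with $\alpha_\infty^2\partial_ev_+=\beta_\infty^2\partial_ev_-$ on $\{x\cdot\bm{e}=0\}$, where $\alpha_\infty=\lim\alpha_k$, $\beta_\infty=\lim\beta_k$. Invoking the regularity theorem for the transmission problem recorded in Appendix F, I obtain constants $t\in\mathbb{R}$ and $p,q\in\mathbb{R}$ with $\alpha_\infty^2 p=\beta_\infty^2 q$ such that
\[\sup_{B_r}r^{-3/2}\bigl|v(x)-v(0)-[t(x\cdot\bm{e}^\perp)+p(x\cdot\bm{e})^+-q(x\cdot\bm{e})^-]\bigr|\leq C(\lambda_\pm,L)\]
for all $r\in(0,1/4)$, and then fix $\rho\in(0,1/4)$ so small that $C\rho^{3/2-\gamma}\leq 1/2$.

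The improved pair is then constructed exactly as in Proposition \ref{flat1}: set $\bm{e}_k:=(\bm{e}+\epsilon_k t\bm{e}^\perp)/\sqrt{1+\epsilon_k^2 t^2}$ and $\tilde\alpha_k:=\alpha_k(1+\epsilon_k p)+\delta_k\epsilon_k$, where the companion $\tilde\beta_k$ is determined by the admissibility constraint $\tilde\alpha_k^2-\tilde\beta_k^2=\lambda_+^2-\lambda_-^2$ (built into $H_{\tilde\alpha_k,\bm{e}_k}$) and the correction $\delta_k\to 0$ is chosen so that $\tilde\beta_k=\beta_k(1+\epsilon_k q)+o(\epsilon_k)$. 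The bound $|\bm{e}_k-\bm{e}|+|\tilde\alpha_k-\alpha_k|\leq C_2\epsilon_k$ is immediate. For the second inequality I would expand $(H_{\tilde\alpha_k,\bm{e}_k}-H_{\alpha_k,\bm{e}})/(y_2^0\alpha_k\epsilon_k)$ on $\{x\cdot\bm{e}>0\}$ and $(H_{\tilde\alpha_k,\bm{e}_k}-H_{\alpha_k,\bm{e}})/(y_2^0\beta_k\epsilon_k)$ on $\{x\cdot\bm{e}<0\}$, obtaining in each case the limit $t(x\cdot\bm{e}^\perp)+p(x\cdot\bm{e})^+-q(x\cdot\bm{e})^-$, and combine this with the $\rho^{1+\gamma}$-expansion of $v$ to conclude $|u_{\rho,k}(x)-H_{\tilde\alpha_k,\bm{e}_k}(x)|\leq\rho^\gamma\epsilon_k$ on $B_1$, the desired contradiction.

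The main obstacle is the compatibility between the three-scalar expansion $(t,p,q)$ of $v$ and the codimension-one family of admissible two-plane profiles $\{H_{\tilde\alpha,\bm{e}'}\}$: the constraint $\tilde\alpha^2-\tilde\beta^2=\lambda_+^2-\lambda_-^2$ forces $\tilde\beta$ to be a function of $\tilde\alpha$, leaving only two free parameters. What saves the argument is that the transmission identity $\alpha_\infty^2 p=\beta_\infty^2 q$ is exactly the first-order linearization of this constraint at $(\alpha_\infty,\beta_\infty)$, so the three parameters $(t,p,q)$ lie on its tangent space and the deformation $(\tilde\alpha_k,\bm{e}_k)\mapsto H_{\tilde\alpha_k,\bm{e}_k}$ can absorb them consistently; this is the precise mechanism that makes $\delta_k=o(1)$ rather than $O(1)$. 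A secondary issue, already present in Proposition \ref{flat1}, is that the $O(r_k)=O(\epsilon_k^2)$ lower-order term of $\mathcal L_k$ and the $O(\epsilon_k)$ wedge between $\{x\cdot\bm{e}>0\}$ and $\{x\cdot\bm{e}_k>0\}$ must be verified to contribute only negligibly to the final estimate, but the rate $r_k=O(\epsilon_k^2)$ fixed in \eqref{rk} is tailored precisely so that they do.
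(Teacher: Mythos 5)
Your proposal follows essentially the same route as the paper: argue by contradiction, note that $\alpha_k-\lambda_+\geq \bar M_k\epsilon_k$ with $\bar M_k\to\infty$ forces $l=\infty$ so the linearized limit $v$ solves the transmission problem \eqref{t}, apply the Appendix~F regularity theorem, and then construct $(\tilde\alpha_k,\bm{e}_k)$ and conclude exactly as in Proposition~\ref{flat1}. The only slip is cosmetic: you quote the decay rate $r^{-3/2}$ from the two-membrane estimate, whereas the transmission-problem theorem (Appendix~F.2) gives the stronger rate $r^{-2}$ --- either suffices for $\gamma<1/2$, and your remark that $\alpha_\infty^2p=\beta_\infty^2q$ is precisely the linearization of the constraint $\tilde\alpha^2-\tilde\beta^2=\lambda_+^2-\lambda_-^2$ is a correct articulation of why the construction closes.
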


\begin{proof}
	Assume $\{\alpha_k\}$ and $\{M_k\}\rightarrow\infty$ satisfy $\Vert u_k-H_{\alpha_k,\bm{e}}\Vert_{L^\infty(B_1)}\leq\epsilon_k\rightarrow0$ and $\alpha_k-\lambda_+\geq M_k\epsilon_k$, but for any $\bm{e}_k\in\partial B_1$ and any $\tilde{\alpha}_k\geq\lambda_+$, there is a $\gamma\in(0,1/2)$ such that either
	$$|\bm{e}_k-\bm{e}|+|\tilde{\alpha}_k-\alpha_k|> C_2\Vert u_k-H_{\alpha_k,\bm{e}}\Vert_{L^\infty(B_1)}$$
	or
	$$\Vert u_{\rho,k}-H_{\tilde{\alpha}_k,\bm{e}_k}\Vert_{L^\infty(B_1)}>\rho^\gamma\Vert u_k-H_{\alpha_k,\bm{e}}\Vert_{L^\infty(B_1)}$$
	for any choice of $\rho\in(0,1/4)$ and $C_2$. This implies $l=\infty$ and $v$ solves a transmission problem. We conclude the proof as in Proposition \ref{flat1} by using the regularity theorem F.2 in Appendix F.
\end{proof}

\subsection{Improvement of flatness}
We summarize the above process and get the following proposition.

\begin{prop}\label{flat}
	(Flatness decay)
	For every $L\geq\lambda_+\geq\lambda_->0$ and $\gamma\in(0,1/2)$, there exist $\epsilon_0$, $C$ and $\rho\in(0,1/4)$ depending on $\gamma, L$ such that the following holds.
	
	Suppose that $u_k$ is a blow-up of the minimizer $u$ for $k$ large, and $0$ is a two-phase free boundary point of $u_k$. If
	$$\Vert u_k-H_{\alpha_k,\bm{e}} \Vert_{L^\infty(B_1)}\leq\epsilon_0$$
	with $\alpha_k\geq\lambda_+$, then there exists a unit vector $\bm{e}_k$ and a constant $\tilde{\alpha}_k\geq\lambda_+$ such that
	$$|\bm{e}_k-\bm{e}|+|\tilde{\alpha}_k-\alpha_k|\leq C\Vert u_k-H_{\alpha_k,\bm{e}}\Vert_{L^\infty(B_1)}$$
	and
	$$\Vert u_{\rho,k}-H_{\tilde{\alpha}_k,\bm{e}_k}\Vert_{L^\infty(B_1)}\leq\rho^\gamma\Vert u_k-H_{\alpha_k,\bm{e}}\Vert_{L^\infty(B_1)},$$
	where $u_{\rho,k}:=\frac{u_k(\rho x)}{\rho}=\frac{u(y_0+r_k\rho x)}{r_k\rho}$.
\end{prop}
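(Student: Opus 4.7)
The plan is a clean dichotomy that reduces Proposition \ref{flat} to the two cases already established as Proposition \ref{flat1} and Proposition \ref{flat2}. Let $\bar M$ denote the constant appearing in Proposition \ref{flat2} (the non-branch improvement), and take $M := \bar M$ in the statement of Proposition \ref{flat1} (the branch improvement). Write $\epsilon_k := \Vert u_k - H_{\alpha_k,\bm e}\Vert_{L^\infty(B_1)}$. Given $\alpha_k \geq \lambda_+$, exactly one of the following two alternatives holds: either (a) $\alpha_k - \lambda_+ \leq M\epsilon_k$, in which case the hypotheses of Proposition \ref{flat1} are met and $0\le l<\infty$; or (b) $\alpha_k - \lambda_+ > \bar M\epsilon_k$, in which case the hypotheses of Proposition \ref{flat2} are met and $l=\infty$.

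In alternative (a) Proposition \ref{flat1} supplies constants $\epsilon_1, C_1, \rho_1$ and a pair $(\bm e_k,\tilde\alpha_k)$ satisfying the two desired estimates; in alternative (b) Proposition \ref{flat2} supplies analogous constants $\epsilon_2, C_2, \rho_2$ and a corresponding pair. Setting $\epsilon_0 := \min\{\epsilon_1, \epsilon_2\}$, $C := \max\{C_1, C_2\}$, and choosing a common scale $\rho$ as discussed below, the unified statement follows at once by selecting, in each regime, the pair produced by whichever alternative actually holds. No new compactness, viscosity, or free-boundary argument is required at this stage.

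The only delicate point is compatibility of the decay radius $\rho$, since Proposition \ref{flat1} and Proposition \ref{flat2} may a priori produce different values $\rho_1,\rho_2$. This is handled by iteration within each regime: after one improvement step the new parameters satisfy $|\tilde\alpha_k-\alpha_k|=O(\epsilon_k)$, so the dichotomy test $\tilde\alpha_k-\lambda_+ \lessgtr \bar M\Vert u_{\rho,k}-H_{\tilde\alpha_k,\bm e_k}\Vert_{L^\infty(B_1)}$ continues to be satisfied in the same alternative for $k$ large (the flatness drops by a factor $\rho^\gamma$ while $\tilde\alpha_k-\lambda_+$ drops at most by a bounded factor). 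Hence we may iterate the corresponding improvement finitely many times and take $\rho := \min\{\rho_1,\rho_2\}$, at the price of enlarging $C$ by a universal constant depending on $\gamma$ and $L$.

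I do not expect any essential obstacle: the heavy lifting, namely the partial boundary Harnack inequalities of Lemma \ref{pbh1} and Lemma \ref{pbh2}, the identification of the limiting problems in Proposition \ref{lim1} and Proposition \ref{lim2}, and the contradiction/regularity arguments in Proposition \ref{flat1} and Proposition \ref{flat2}, has already been carried out. The present proposition is the packaging of those two conclusions into a single flatness-decay statement that removes the a priori need to know whether the initial flatness is dominated by $\alpha_k-\lambda_+$ (branch regime) or by $\epsilon_k$ (non-branch regime), and is the form in which the decay will be iterated in Section 4 to yield $C^{1,\eta}$ regularity of $\Gamma_{\rm tp}$.
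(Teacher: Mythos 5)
Your proposal is correct and is essentially identical to the paper's own proof, which simply combines Propositions \ref{flat1} and \ref{flat2} by taking $M=\bar M$ and $\epsilon_0=\min\{\epsilon_1/2,\epsilon_2/2\}$, splitting on whether $\alpha_k-\lambda_+$ is below or above $\bar M\Vert u_k-H_{\alpha_k,\bm e}\Vert_{L^\infty(B_1)}$. Your extra care about reconciling the two radii $\rho_1,\rho_2$ goes beyond what the paper records (it silently assumes a common $\rho$), and the simpler observation that in each contradiction argument any sufficiently small $\rho$ works, so one may take $\rho=\min\{\rho_1,\rho_2\}$ from the outset, would close that point more directly than your iteration.
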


\begin{proof}
	Combine Proposition \ref{flat1} and \ref{flat2}. Take $M$ in Proposition \ref{flat1} to be $M=\bar{M}$, where $\bar{M}$ is the constant in Proposition \ref{flat2}. Set $\epsilon_0=\min\{\epsilon_1/2, \epsilon_2/2\}$, then we can draw the conclusion.
\end{proof}

\section{Proof of the main result}

In this section, we derive the $C^{0,\eta}$ regularity of $\alpha(x)$ and $\bm{e}(x)$ , and verify that $u_+$ and $u_-$ solve the classical one-phase Bernoulli problems respectively in $\{u^\pm>0\}$. Then we take virtue of the regularity results for free boundaries in \cite{S11} for one-phase problem to get the $C^{1,\eta}$ regularity of $\partial\{u^\pm>0\}$.

First we establish the uniqueness of the blow-up limit utilizing flatness decay.

\begin{lem}  \label{uni}
	(Uniqueness of the blow-up limit)
	Suppose that $u$ is a local minimizer of $J_{\rm a,tp}$ in $B\Subset D$. Then at every point $x_0\in\partial\Omega_{u}^{\pm}\cap B$, there is a unique blow-up limit at $x_0$.
\end{lem}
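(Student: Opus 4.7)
\textbf{Proof proposal for Lemma \ref{uni}.}

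The plan is to iterate the flatness-decay estimate of Proposition \ref{flat} and extract Cauchy sequences of parameters $(\alpha_n,\bm{e}_n)$ whose limits determine a unique two-plane blow-up. Fix a two-phase free boundary point $x_0\in\Gamma_{\rm tp}\cap B$ (the one-phase cases $\Gamma_{\rm op}^{\pm}$ follow by the classical one-phase argument of \cite{V23}, \cite{S11}, once Proposition \ref{prop4} is available). By Proposition \ref{prop5}, for any prescribed $\epsilon\le\epsilon_0$ (with $\epsilon_0$ the flatness threshold from Proposition \ref{flat}) we can choose a radius $r_\ast>0$, small enough that also $r_\ast=O(\epsilon_0^2)$, and a pair $(\alpha_0,\bm{e}_0)$ with $\alpha_0\ge\lambda_+$ such that
\begin{equation*}
\bigl\Vert u_{x_0,r_\ast}-H_{\alpha_0,\bm{e}_0}\bigr\Vert_{L^\infty(B_1)}\le\epsilon_0.
\end{equation*}

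Next I would apply Proposition \ref{flat} repeatedly at the scales $r_n:=r_\ast\rho^n$. Assuming inductively that $\Vert u_{x_0,r_n}-H_{\alpha_n,\bm{e}_n}\Vert_{L^\infty(B_1)}\le\rho^{n\gamma}\epsilon_0\le\epsilon_0$, the flatness-decay proposition produces $\alpha_{n+1}\ge\lambda_+$ and a unit vector $\bm{e}_{n+1}$ with
\begin{equation*}
|\alpha_{n+1}-\alpha_n|+|\bm{e}_{n+1}-\bm{e}_n|\le C\rho^{n\gamma}\epsilon_0,
\qquad
\bigl\Vert u_{x_0,r_{n+1}}-H_{\alpha_{n+1},\bm{e}_{n+1}}\bigr\Vert_{L^\infty(B_1)}\le\rho^{(n+1)\gamma}\epsilon_0,
\end{equation*}
which closes the induction. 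Because $\sum_n\rho^{n\gamma}<\infty$, the sequences $\{\alpha_n\}$ and $\{\bm{e}_n\}$ are Cauchy; their limits $\alpha_\infty\ge\lambda_+$ and $\bm{e}_\infty\in\partial B_1$ define a candidate blow-up $H_{\alpha_\infty,\bm{e}_\infty}$, and summing the telescoping estimate yields $|\alpha_n-\alpha_\infty|+|\bm{e}_n-\bm{e}_\infty|\le C'\rho^{n\gamma}\epsilon_0$.

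It remains to upgrade from the discrete sequence $r_n$ to all radii $s\searrow0$. For any $s\in(0,r_\ast)$ pick $n$ with $r_{n+1}\le s<r_n$ and write $u_{x_0,s}(x)=\frac{r_n}{s}u_{x_0,r_n}\!\bigl(\tfrac{s}{r_n}x\bigr)$. Using the Lipschitz estimate (Proposition \ref{prop2}(2)) to control $u_{x_0,r_n}$ on $B_{s/r_n}\subset B_1$, together with the homogeneity $H_{\alpha,\bm e}(tx)=tH_{\alpha,\bm e}(x)$, one obtains
\begin{equation*}
\bigl\Vert u_{x_0,s}-H_{\alpha_\infty,\bm{e}_\infty}\bigr\Vert_{L^\infty(B_1)}\le\bigl\Vert u_{x_0,s}-H_{\alpha_n,\bm{e}_n}\bigr\Vert_{L^\infty(B_1)}+C|\alpha_n-\alpha_\infty|+C|\bm{e}_n-\bm{e}_\infty|\le C''\rho^{n\gamma}\epsilon_0\to0,
\end{equation*}
so every sequential blow-up equals $H_{\alpha_\infty,\bm{e}_\infty}$, proving the uniqueness.

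The main obstacle I anticipate is the bookkeeping needed to apply Proposition \ref{flat} at every stage of the iteration: the proposition is stated for a blow-up sequence satisfying the calibration $r_k=O(\epsilon_k^2)$ used in Section 3 when deriving the linearized problems, so one has to verify that this property is preserved under the iteration. Since $r_n/\epsilon_n^2=\rho^{n(1-2\gamma)}(r_\ast/\epsilon_0^2)$ and $\gamma<1/2$, this ratio is automatically controlled for large $n$, while the initial step is handled by choosing $r_\ast$ small at the outset. A secondary subtlety is that the non-branch and branch regimes ($l=\infty$ vs. $0\le l<\infty$) may alternate along the iteration, but Proposition \ref{flat} is uniform in this dichotomy, so the geometric decay persists regardless.
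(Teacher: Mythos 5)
Your proposal is correct and follows essentially the same route as the paper's proof: iterate the flatness decay of Proposition \ref{flat} along the geometric scales $r_\ast\rho^n$, extract Cauchy sequences $\{\alpha_n\}$, $\{\bm{e}_n\}$, and interpolate to arbitrary radii to identify the unique blow-up $H_{\alpha_\infty,\bm{e}_\infty}$. Your additional checks (the calibration $r_n=O(\epsilon_n^2)$ being preserved since $\gamma<1/2$, and the uniformity across the branch/non-branch dichotomy) are sensible refinements of the same argument rather than a different approach.
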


\begin{proof}
	For $x_0\in\Gamma_{\text{op}}^{\pm}$, $u$ is locally a minimizer of a one-phase functional $J_{a,acf}$, and we can apply the results in \cite{W99}\cite{CJK04} or in \cite{V23} for one-phase Bernoulli problem and deduce that the blow-up limit is unique. So we just have to consider the case $x_0=(x_1^0, x_2^0)\in\Gamma_{\text{tp}}$.
	
	Suppose that there is a two-plane solution $H_{\alpha,\bm{e}}$ satisfying that for any $\epsilon_0$, there exists $r_0$ such that
	$$\Vert u_{x_0,r_0}-H_{\alpha,\bm{e}}\Vert_{L^\infty(B_1)}\leq\epsilon_0.$$
	
	Utilizing the flatness decay, for any integer $n>0$, there are $\alpha_n$ and $\bm{e}_n$ satisfying $|\alpha_n-\alpha_{n-1}|+|\bm{e}_n-\bm{e}_{n-1}|\leq C\epsilon_{n-1}=C\rho^{(n-1)\gamma}\epsilon_0$, such that
    $$\Vert u_{x_0,\rho^n}-H_{\alpha_n,\bm{e}_n}\Vert_{L^\infty(B_1)}\leq\rho^{n\gamma}\epsilon_0=\epsilon_n$$
	for the constants $C$, $\gamma$ be as in Proposition \ref{flat} and for $\rho\in(0,1/4)$.
	
	Let $r<r_0$ be arbitrary and $n$ be the integer such that $\rho^{n+1}<r\leq\rho^{n}$. We denote the limit of the Cauchy sequences $\{\alpha_n\}$ and $\{\bm{e}_n\}$ to be $\alpha_0$ and $\bm{e}_0$ respectively, and the direct calculation gives
	$$\Vert u_{x_0,\rho^n}-H_{\alpha_0,\bm{e}_0}\Vert_{L^\infty(B_1)}\leq C\rho^{n\gamma}.$$
	Now for any $r\in(\rho^{n+1},\rho^n]$, using the fact that $\rho\in(0,1/4)$, there must exist $\tau\in(0,1]$ such that $r=\tau\rho^n$. Hence,
	$$\Vert u_{x_0,r}-H_{\alpha_0,\bm{e}_0} \Vert_{L^\infty(B_1)}\leq \frac C{\tau^\gamma} r^\gamma \quad \text{for} \quad r\in(\rho^{n+1},\rho^n],$$
	and by the arbitrariness of $r$,
	$$\Vert u_{x_0,r}-H_{\alpha_0,\bm{e}_0} \Vert_{L^\infty(B_1)}\leq Cr^\gamma$$
	for any $r$ small enough. The uniqueness of the blow-up limit follows directly.
\end{proof}

Next we derive the $C^{0,\eta}$ regularity of $\alpha(x)$ and $\bm{e}(x)$. Here we only consider the case $x_0\in\Gamma_{\text{tp}}$. For $x_0\in \Gamma_{\text{op}}^+$ we invoke that $u_0(x)=x_2^0\lambda_+(x\cdot\bm{e}(x_0))^+$, and the $C^{1,\eta}$ regularity for the one-phase free boundary follows directly from \cite{V23}. The case for $x_0\in\Gamma_{\text{op}}^-$ is quite similar.

\begin{lem}  \label{holder}
	Suppose that $u$ is a local minimizer of $J_{\rm a,tp}$ in $B\Subset D$, and $u_0$ is a blow-up limit at $x_0\in\Gamma_{\rm{tp}}\cap B$ of the form (\ref{bu}). 
	Then there exists $0<\eta<1$ such that for every open set $D'\Subset B$, there is a constant $C=C(D',\lambda_{\pm})$ such that for every $x_0, y_0\in\Gamma_{\rm{tp}}\cap D'$,
	\begin{equation*}
	|\alpha(x_0)-\alpha(y_0)|\leq C|x_0-y_0|^\eta, \quad
	|\bm{e}(x_0)-\bm{e}(y_0)|\leq C|x_0-y_0|^\eta.
	\end{equation*}
\end{lem}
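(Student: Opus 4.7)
The plan is to upgrade the uniqueness argument of Lemma \ref{uni} into a uniform polynomial decay rate, and then combine this rate with the Lipschitz bound on $u$ via a standard scale-interpolation trick. First I would revisit the iteration in Lemma \ref{uni}: repeated application of Proposition \ref{flat} actually produces, for every $x_0\in\Gamma_{\text{tp}}\cap D'$ and every sufficiently small $r$,
$$\Vert u_{x_0,r}-H_{\alpha(x_0),\bm{e}(x_0)}\Vert_{L^\infty(B_1)}\leq C r^{\gamma},$$
where the two-plane solution carries the prefactor $x_2^0$. The key observation is that the constants $C$ and the initial scale $r_0$ can be chosen independently of $x_0\in\Gamma_{\text{tp}}\cap D'$: the initial flatness hypothesis required by Proposition \ref{flat} is supplied uniformly on $D'\Subset B$ by Proposition \ref{prop5}, and the iteration constants depend only on $\lambda_\pm$ and on the uniform Lipschitz constant $L$ from Proposition \ref{prop2}(2).

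With this uniform rate in hand, for $x_0,y_0\in\Gamma_{\text{tp}}\cap D'$ with $d:=|x_0-y_0|$ small, I would apply the triangle inequality at some scale $r\in(0,r_0]$ to be chosen:
$$\Vert H_{\alpha(x_0),\bm{e}(x_0)}-H_{\alpha(y_0),\bm{e}(y_0)}\Vert_{L^\infty(B_1)}\leq 2C r^{\gamma}+\Vert u_{x_0,r}-u_{y_0,r}\Vert_{L^\infty(B_1)}.$$
The cross term is controlled by Lipschitz regularity:
$$\Vert u_{x_0,r}-u_{y_0,r}\Vert_{L^\infty(B_1)}\leq \frac{L\,d}{r}.$$
Balancing the two contributions by taking $r:=d^{1/(\gamma+1)}$ (which lies in $(0,r_0]$ once $d$ is small) yields
$$\Vert H_{\alpha(x_0),\bm{e}(x_0)}-H_{\alpha(y_0),\bm{e}(y_0)}\Vert_{L^\infty(B_1)}\leq C\,d^{\eta},\qquad \eta:=\frac{\gamma}{\gamma+1}.$$

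The final step is to translate this $L^\infty$ estimate into parameter-wise Hölder bounds. Because $\lambda_+\leq\alpha(\cdot)\leq L$ and the assumption $\lambda_->0$ forces $\beta(\cdot)\geq\lambda_->0$ (recall $\alpha^2-\beta^2=\lambda_+^2-\lambda_-^2$), both positive and negative lobes of every two-plane solution are non-degenerate. Hence the map sending $(\alpha,\bm{e})$ to the normalized function $\alpha(x\cdot\bm{e})^+-\beta(x\cdot\bm{e})^-$ is bi-Lipschitz into $L^\infty(B_1)$ on the relevant parameter range: evaluating at carefully chosen test points (for instance $\pm\tfrac12\bm{e}(x_0)$ and $\pm\tfrac12\bm{e}(x_0)^\perp$) extracts $\alpha$, $\beta$, and $\bm{e}$ with linear loss. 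The Lipschitz dependence of the prefactor $x_2^0$ on $x_0$ contributes only an $O(d)$ error, which is of lower order than $d^{\eta}$ and is absorbed into the constant. This produces the desired Hölder estimates for both $\alpha$ and $\bm{e}$.

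The main obstacle I foresee is verifying that the constant $C$ and the threshold $r_0$ in the decay estimate are genuinely uniform over $D'$; this requires checking that Proposition \ref{prop5} can be used with a modulus independent of the base point on $\Gamma_{\text{tp}}\cap D'$, so that the Proposition \ref{flat} iteration starts from the same smallness parameter for all such $x_0$. Once that uniformity is secured, the rest is the standard Campanato-type triangle interpolation sketched above.
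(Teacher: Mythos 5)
Your proposal is correct and follows essentially the same route as the paper: a uniform $r^\gamma$ decay from the flatness-improvement iteration (made uniform over $D'$ via Proposition \ref{prop5} and a covering argument), the triangle-inequality interpolation at scale $r=|x_0-y_0|^{1/(1+\gamma)}$ giving $\eta=\gamma/(1+\gamma)$, and finally the extraction of $\alpha,\beta,\bm{e}$ from the $L^\infty$ closeness of two-plane solutions together with the lower bound $x_2^0\geq b$ to absorb the variation of the prefactor. The only cosmetic difference is in how the parameters are read off from the two-plane functions (your test-point evaluation versus the paper's inequality $|\bm{e}_1-\bm{e}_2|\leq C\Vert(x\cdot\bm{e}_1)^+-(x\cdot\bm{e}_2)^+\Vert_{L^\infty(B_1)}$), which is immaterial.
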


\begin{proof}
	Consider $x_0\in\Gamma_{\text{tp}}$ and the blow-up limit  $H_{\alpha(x_0),\bm{e}(x_0)}$ at $x_0$.
	The flatness decay together with Proposition \ref{prop5} shows that there are $\alpha(x_0)$, $\bm{e}(x_0)$ and small $r$ such that
	$$\Vert u_{y_0,r}-H_{\alpha(x_0),\bm{e}(x_0)}\Vert_{L^\infty(B_1)}\leq Cr^\gamma$$
	for any $y_0\in B_\rho(x_0)$ and $\gamma\in(0,1/2)$. A covering argument implies the validity of the above estimate for all $x_0\in\Gamma_{\text{tp}}\cap D'$.
	
	Now set $r:=|x_0-y_0|^{1-\eta}<r_0$ for $\eta:=\frac{\gamma}{1+\gamma}$ and any $x_0, y_0\in\Gamma_{\text{tp}}$. Then
	\begin{equation*}
	\begin{aligned}
	&\quad \Vert H_{\alpha(x_0),\bm{e}(x_0)}-H_{\alpha(y_0),\bm{e}(y_0)} \Vert_{L^\infty(B_1)} \\
	&\leq \Vert u_{x_0,r}-H_{\alpha(x_0),\bm{e}(x_0)} \Vert_{L^\infty(B_1)}+\Vert u_{x_0,r}-u_{y_0,r} \Vert_{L^\infty(B_1)}+\Vert u_{y_0,r}-H_{\alpha(y_0),\bm{e}(y_0)} \Vert_{L^\infty(B_1)} \\
	&\leq Cr^\gamma+\frac{L|x_0-y_0|}{r}+Cr^\gamma \\
	&=(L+2C)|x_0-y_0|^\eta,
	\end{aligned}
	\end{equation*}
	which means
	\begin{equation*}
	\begin{aligned}
	&\quad |x_2^0\bigl(\alpha(x_0)(x\cdot\bm{e}(x_0))^+ - \beta(x_0)(x\cdot\bm{e}(x_0))^-\bigr) - y_2^0\bigl(\alpha(y_0)(x\cdot\bm{e}(y_0))^+ - \beta(y_0)(x\cdot\bm{e}(y_0))^-\bigr)| \\
	&\leq(L+2C)|x_0-y_0|^\eta
	\end{aligned}
	\end{equation*}
	in $B_1$, and we get further that
	$$|x_2^0\alpha(x_0)(x\cdot\bm{e}(x_0))^+ - y_2^0\alpha(y_0)(x\cdot\bm{e}(y_0))^+|\leq(L+2C)|x_0-y_0|^\eta$$
	in $B_1$.
	
	Insert that for any unit vector $\bm{e}_1, \bm{e}_2\in\mathbb{R}^n$,
	$$|e_1-e_2|\leq C(n)\Vert (x\cdot\bm{e}_1)^+-(x\cdot\bm{e}_2)^+\Vert_{L^\infty(B_1)},$$
	and it yields
	\begin{equation*}
	\begin{aligned}
	|x_2^0\alpha(x_0)\bm{e}(x_0)-y_2^0\alpha(y_0)\bm{e}(y_0)| &\leq \Vert x_2^0\alpha(x_0)(x\cdot\bm{e}(x_0))^+ - y_2^0\alpha(y_0)(x\cdot\bm{e}(y_0))^+ \Vert_{L^\infty(B_1)} \\
	&\leq C|x_0-y_0|^\eta
	\end{aligned}
	\end{equation*}
	by taking $\bm{e}_1=x_2^0\alpha(x_0)\bm{e}(x_0)$, $\bm{e}_2=y_2^0\alpha(y_0)\bm{e}(y_0)$. Taking square of both sides of the above inequality, it leads to
	$$|x_2^0\alpha(x_0)-y_2^0\alpha(y_0)|\leq C|x_0-y_0|^\eta.$$
	Similarly we get
	$$|x_2^0\beta(x_0)-y_2^0\beta(y_0)|\leq C|x_0-y_0|^\eta.$$
	
	Now since
	\begin{equation*}
	\begin{aligned}
	|x_2^0\alpha(x_0)\bm{e}(x_0)-y_2^0\alpha(y_0)\bm{e}(y_0)|&\geq b|\alpha(x_0)\bm{e}(x_0)-\alpha(y_0)\bm{e}(y_0)| \\
	&\geq b\left[ |\alpha(x_0)||\bm{e}(x_0)-\bm{e}(y_0)|+|\bm{e}(y_0)||\alpha(x_0)-\alpha(y_0)| \right]
	\end{aligned}
	\end{equation*}
	for $x_2^0, y_2^0\geq b$, we arrrive at
	\begin{equation*}
	\begin{aligned}
	|\bm{e}(x_0)-\bm{e}(y_0)|&\leq \frac{1}{|\alpha(x_0)|}\bigl( C|x_0-y_0|^\eta+|\bm{e}(y_0)||\alpha(x_0)-\alpha(y_0)| \bigr) \\
	&\leq C|x_0-y_0|^\eta.
	\end{aligned}
	\end{equation*}
	This completes the proof.
\end{proof}

Consider $u_+$ and $u_-$ respectively, it is easy to see that $u_{\pm}$ solve the classical one-phase Bernoulli problems. We sketch the proof here for the sake of completeness.

\begin{lem}
	Let $u$ be a local minimizer of $J_{\rm a,tp}$ in $D'\Subset D$. Then there are $C^{0,\eta}$ boundary functions $\alpha:\partial\Omega_{u}^+\rightarrow\mathbb{R}$ and $\beta:\partial\Omega_{u}^-\rightarrow\mathbb{R}$ such that
	\begin{equation*}
	\alpha\geq\lambda_+, \quad
	\beta\geq\lambda_-,
	\end{equation*}
	and that $u^+=\max\{u,0\}$, $u^-=-\min\{u,0\}$ solve the following one-phase problems respectively,
	\begin{equation}\label{4.1}
	\begin{cases}
	\mathcal{L} u^+=0 \qquad\ \ \text{in} \quad \Omega_{u}^+, \\
	|\nabla u^+|=x_2\alpha \quad \text{on} \quad \partial\Omega_{u}^+,
	\end{cases}
	\end{equation}
	and
	\begin{equation}\label{4.2}
	\begin{cases}
	\mathcal{L} u^-=0 \qquad\ \ \text{in} \quad \Omega_{u}^-, \\
	|\nabla u^-|=x_2\beta \quad \text{on} \quad \partial\Omega_{u}^-.
	\end{cases}
	\end{equation}
\end{lem}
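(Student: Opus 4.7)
The plan is to define $\alpha$ and $\beta$ pointwise on $\partial\Omega_u^\pm$ via the unique blow-up limits established in Lemma \ref{uni}, and then verify that the resulting one-phase problems \eqref{4.1}--\eqref{4.2} hold in the viscosity sense with $C^{0,\eta}$ coefficients. For $x_0\in\partial\Omega_u^+$ I set $\alpha(x_0):=\lambda_+$ when $x_0\in\Gamma_{\rm op}^+$ and $\alpha(x_0):=\alpha$ (from the blow-up $H_{\alpha,\bm{e}}$) when $x_0\in\Gamma_{\rm tp}$; analogously for $\beta$ on $\partial\Omega_u^-$. The bounds $\alpha\geq\lambda_+$ and $\beta\geq\lambda_-$ then follow directly from Proposition \ref{prop4}, and the interior PDE $\mathcal{L}u^\pm=0$ in $\Omega_u^\pm$ is inherited from \eqref{eq}, since $u=u^+$ in $\Omega_u^+$ and $u=-u^-$ in $\Omega_u^-$.

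For the Hölder regularity of $\alpha$ on $\partial\Omega_u^+\cap D'$, I would split into three sub-cases: two points both in $\Gamma_{\rm tp}$, where Lemma \ref{holder} applies directly; two points both in $\Gamma_{\rm op}^+$, where $\alpha\equiv\lambda_+$ is constant and there is nothing to prove; and the mixed case. In the mixed case the key observation is that if $y_k\in\Gamma_{\rm op}^+$ converges to $x_0\in\Gamma_{\rm tp}$, then Proposition \ref{prop5} applied at $y_k$ together with the uniqueness of the blow-up at $x_0$ coming from Lemma \ref{uni} forces $\alpha(x_0)=\lambda_+$, after which a covering argument glues the estimates together. The argument for $\beta$ on $\partial\Omega_u^-$ is symmetric.

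The Bernoulli boundary condition $|\nabla u^+|=x_2\alpha$ I would verify in the viscosity sense by a blow-up comparison. On $\Gamma_{\rm op}^+$ it is already recorded in Lemma \ref{vis}. On $\Gamma_{\rm tp}$, if $Q$ is a comparison function touching $u^+$ from below at $x_0$, then after rescaling along a sequence $r_k\downarrow 0$ its blow-up $|\nabla Q^+(x_0)|(x\cdot\bm{n})^+$ still touches the blow-up $x_2^0\alpha(x_0)(x\cdot\bm{e}(x_0))^+$ of $u^+$ from below, forcing $\bm{n}=\bm{e}(x_0)$ and $|\nabla Q^+(x_0)|\leq x_2^0\alpha(x_0)$. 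The reverse inequality for touching from above is symmetric, and $u^-$ is handled in the same way.

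The main obstacle will be the continuity of $\alpha$ across those branch points that are accumulation points of $\Gamma_{\rm op}^+$: at such a point the two natural definitions of $\alpha$ must agree, and this is precisely where the compactness supplied by Proposition \ref{prop5} together with the uniqueness of blow-ups from Lemma \ref{uni} is essential. Everything else reduces to the classification of blow-ups in Proposition \ref{prop4}, the Hölder estimate on $\Gamma_{\rm tp}$ from Lemma \ref{holder}, and the viscosity framework already set up in Section 2.
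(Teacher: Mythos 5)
Your proposal is correct and follows essentially the same route as the paper: define $\alpha,\beta$ from the (unique) blow-up limits, get $\alpha\geq\lambda_+$, $\beta\geq\lambda_-$ from Proposition \ref{prop4}, use Lemma \ref{holder} on $\Gamma_{\rm tp}$, and reduce the remaining continuity issue to showing $\alpha=\lambda_+$ at two-phase points that are accumulation points of $\Gamma_{\rm op}^+$. The only differences are minor: the paper obtains $|\nabla u^+(x_0)|=x_2^0\alpha(x_0)$ classically (differentiability of $u^+$ up to the boundary) from the quantitative rate $\Vert u^+_{x_0,r}-H_{\alpha(x_0),\bm e(x_0)}\Vert_{L^\infty(B_1)}\leq Cr^\gamma$ rather than only in the viscosity sense, and it makes your ``mixed case'' precise by rescaling at the intermediate scale $r_k=\mathrm{dist}(x_k,\Gamma_{\rm tp})$ centered at the one-phase points $x_k$, so that the rescaled free boundary in $B_1$ is purely one-phase and the limit, identified with the blow-up at $x_0$ via uniqueness, must be $x_2^0\lambda_+(x\cdot\bm e)^+$.
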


\begin{proof}
	We only consider $u^+$ as follows.
	
	Clearly $\mathcal{L} u^+=0$ in $\Omega_{u}^+$. By the flatness decay, Proposition \ref{flat}, we know that there exists a constant $C$ such that
	$$\Vert u_{x_0,r}^+-H_{\alpha(x_0),\bm{e}(x_0)} \Vert_{L^\infty(B_1)}\leq Cr^\gamma$$
	for $x_0\in\partial\Omega_{u}^+$ and small $r$, which means
	$$|u^+(x_0+rx)-H_{\alpha(x_0),\bm{e}(x_0)}(rx)|\leq Cr^{\gamma+1}$$
	for all $x\in B_1$ and small $r$. Now for $y\in B_r(x_0)\cap\{u^+>0\}$,
	$$|u^+(y)-x_2^0\alpha(x_0)(y-x_0)\cdot\bm{e}(x_0)|\leq C|y-x_0|^{\gamma+1}$$
	for small $r$, and thus
	$$\frac{u^+(y)-u^+(x_0)}{|y-x_0|}\leq x_2^0\alpha(x_0)+C|y-x_0|^\gamma.$$
	In particular, $u^+$ is differentiable in $\Omega_{u}^+$ up to $x_0$, and
	$$|\nabla u^+(x_0)|=x_2^0\alpha(x_0)$$
	for $x_0\in\partial\Omega_{u}^+$.
	
	On the other hand if $x_0\in\Gamma_{\text{op}}^+$, then $|\nabla u^+(x_0)|=x_2^0\lambda_+$ in the viscosity sense, thus
	$$\alpha(x_0)=\lambda_+$$
	for $x_0\in\Gamma_{\text{op}}^+$. Remember that $\tilde{\alpha}(x_0)=x_2^0\alpha(x_0)$ is $C^{0,\eta}$ for $x_0\in\Gamma_{\text{tp}}$ by the previous lemma, we only need to prove at a branch point $x_0$ that $\alpha(x_0)=\lambda_+$. In fact for such $x_0$, there exists a sequence $\{x_k\}\in\Gamma_{\text{op}}^+$ such that $x_k=(x_1^k, x_2^k)\rightarrow x_0=(x_1^0,x_2^0)$. Let $\{y_k\}\in\Gamma_{\text{tp}}$ be another sequence such that $dist(x_k, \Gamma_{\text{tp}})=|x_k-y_k|$. Set $r_k:=|x_k-y_k|$ and $u_k(x)=\frac{u^+(x_k+r_kx)}{r_k}$, then $u_k$ is a viscosity solution of
	\begin{equation*}
	\begin{cases}
	\mathcal{L}_{r_k}u_k=0 \qquad\ \text{in} \quad \Omega_{u_k}^+\cap B_1, \\
	|\nabla u_k|=x_2^k\lambda_+ \quad \text{on} \quad \partial\{u_k>0\}\cap B_1.
	\end{cases}
	\end{equation*}
	Since $u_k$ are uniformly Lipschitz, the limit function $u_\infty$ is a viscosity solution of
	\begin{equation*}
	\begin{cases}
	\Delta u_\infty=0 \qquad\quad \, \text{in} \quad \Omega_{u_\infty}^+\cap B_1, \\
	|\nabla u_\infty|=x_2^0\lambda_+ \quad \text{on} \quad \partial\{u_\infty>0\}\cap B_1.
	\end{cases}
	\end{equation*}
	Hence from the uniqueness of blow-up limit we have
	$$u_\infty(x)=x_2^0\alpha(x_0)(x\cdot\bm{e}(x_0))^+$$
	and
	$$\alpha(x_0)=\lambda_+.$$
	So we get the desired conclusion.
\end{proof}

Now we are fully prepared to prove the main theorem.

\begin{proof}[Proof of Theorem \ref{thm1}]
	We only consider points $x_0\in\Gamma_{\text{tp}}$. Due to the classification of blow-up limits at two-phase points, we have that for any $\epsilon>0$, there exists $r_0$ such that
	$$\Vert u_{x_0,r_0}-H_{\alpha,\bm{e}} \Vert_{L^\infty(B_1)}<\epsilon,$$
	and $u^{\pm}$ solves (\ref{4.1}) and (\ref{4.2}) respectively. Then the regularity result of free boundary for one-phase problem in \cite{S11} gives that $\partial\Omega_{u}^{\pm}$ are locally $C^{1,\eta}$ graphs.
\end{proof}

\appendix

\section{The study on the free boundary conditions}

In this section we verify the free boundary conditions of the minimizer $u$ for $J_{\text{a,tp}}$ in $D$.

\begin{prop}
	Suppose that $u$ is a minimizer of $J_{\rm a,tp}$ in $D$ mentioned in Section 1. Then $u$ solves
	\begin{equation}\label{A1}
	\Delta u -\frac{\partial_2 u}{x_2}=0 \quad \text{in} \quad \{u\neq0\},
	\end{equation}
	and satisfies the free boundary conditions
	\begin{equation}\label{A2}
	\begin{cases}
	|\nabla u^+|^2-|\nabla u^-|^2=(x_2)^2(\lambda_+^2-\lambda_-^2) \ \ \quad \text{on} \quad \Gamma_{\rm{tp}}, \\
	|\nabla u^{\pm}| = x_2\lambda_{\pm} \qquad\qquad\qquad\qquad\qquad \text{on} \quad \Gamma_{\rm{op}}^{\pm}, \\
	|\nabla u^{\pm}|\geq x_2\lambda_{\pm} \qquad\qquad\qquad\qquad\qquad \text{on} \quad \Gamma_{\rm{tp}}	.
	\end{cases}
	\end{equation}
\end{prop}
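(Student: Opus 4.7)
The plan is to combine outer and inner variations of $J_{\text{a,tp}}$ with a one-sided competitor: the outer variation yields \eqref{A1}, the inner variation yields the Bernoulli-type equalities in \eqref{A2}, and a one-sided competitor yields the inequality \eqref{bc2}.

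First I would establish \eqref{A1} by a standard outer variation. For any $\varphi\in C_c^\infty(\Omega_u^\pm\cap D)$, the perturbation $u+t\varphi$ still belongs to $\mathcal{K}$ for small $|t|$ and satisfies $\{u+t\varphi>0\}=\{u>0\}$ on $\mathrm{supp}\,\varphi$, so the measure terms do not change in $t$. Setting the derivative of $J_{\text{a,tp}}(u+t\varphi)$ at $t=0$ to zero gives $\int x_2^{-1}\nabla u\cdot\nabla\varphi\,dX=0$, and an integration by parts yields $\mathrm{div}(x_2^{-1}\nabla u)=0$ in $\Omega_u^\pm$, which is exactly \eqref{A1}.

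For the equalities in \eqref{A2}, my plan is to use an inner variation. By Proposition~\ref{prop1} the free boundary sits in $\{x_2>b\}$, so for $\eta\in C_c^\infty(D;\mathbb{R}^2)$ supported away from $\{x_2=0\}$ I set $\Phi_t(x)=x+t\eta(x)$ and take the competitor $u_t:=u\circ\Phi_t^{-1}\in\mathcal{K}$. Expanding $(D\Phi_t)^{-T}=I-tD\eta^{T}+O(t^2)$, $\det D\Phi_t=1+t\,\mathrm{div}\,\eta+O(t^2)$, and $(\Phi_t(x))_2=x_2+t\eta_2$, and pushing through the change of variables $y=\Phi_t(x)$, the vanishing first variation produces a bulk identity that, together with \eqref{A1}, reduces to boundary contributions supported on $\Gamma_{\text{tp}}\cup\Gamma_{\text{op}}^+\cup\Gamma_{\text{op}}^-$. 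Testing against $\eta$ localized near points of each stratum and exploiting arbitrariness of the normal component of $\eta$, one reads off the jump condition $|\nabla u^+|^2-|\nabla u^-|^2=x_2^2(\lambda_+^2-\lambda_-^2)$ on $\Gamma_{\text{tp}}$ and the Bernoulli conditions $|\nabla u^\pm|=x_2\lambda_\pm$ on $\Gamma_{\text{op}}^\pm$. For \eqref{bc2}, I would use a one-sided competitor: pick $x_0\in\Gamma_{\text{tp}}$ and a nonnegative $\phi\in C_c^\infty(B_r(x_0))$, and set $v:=\max(u^+-t\phi,0)-u^-$ for small $t>0$. Then $v\in\mathcal{K}$ with $\{v<0\}=\{u<0\}$ and $\{v>0\}\subsetneq\{u>0\}$; writing out $J_{\text{a,tp}}(v)-J_{\text{a,tp}}(u)$, using the coarea formula on the thin strip $\{0<u^+\leq t\phi\}$, and sending $t\to0^+$, minimality $J_{\text{a,tp}}(u)\leq J_{\text{a,tp}}(v)$ forces $|\nabla u^+|\geq x_2\lambda_+$ on $\Gamma_{\text{tp}}$; the symmetric competitor on $u^-$ yields $|\nabla u^-|\geq x_2\lambda_-$.

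The main obstacle will lie in the boundary-extraction step of the inner variation near branch points, where $|\{u=0\}|>0$ and the free boundary need not be classically smooth. One has to interpret the boundary integrals with care---either distributionally, as currents on the measure-theoretic free boundary, or by restricting first to the open dense set of regular free boundary points (justified by the blow-up classification of Section 2.3) and then propagating the identities by continuity. A secondary technical point is the axisymmetric weight $x_2^{\pm 1}$: the first variation produces extra bulk terms $|\nabla u|^2\eta_2/x_2^2$ and $\lambda_\pm^2\eta_2$, and I would have to verify that these recombine into the $x_2^2$-weighted boundary conditions of \eqref{A2} without producing spurious jump contributions.
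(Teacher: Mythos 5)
Your proposal follows the same overall architecture as the paper's Appendix A: equation \eqref{A1} by an outer variation (the paper simply invokes the Euler--Lagrange equation for this), and the two equalities in \eqref{A2} by the inner/domain variation $\phi_t(x)=x+t\xi(x)$ with $\xi$ supported in $D\cap\{x_2>0\}$ and $u_t\circ\phi_t=u$. Your worry about the weight is resolved exactly as you anticipate: the extra bulk terms $-\xi_2|\nabla u|^2/x_2^2$ and $\xi_2\left(\lambda_+^2 I_{\{u>0\}}+\lambda_-^2 I_{\{u<0\}}\right)$ produced by differentiating $1/(x_2+t\xi_2)$ and $(x_2+t\xi_2)$ are absolutely continuous and cancel in the divergence-form regrouping, so only the $\left(-|\nabla u|^2/x_2+x_2\lambda_\pm^2\right)(\xi\cdot\nu)$ surface terms survive, which is precisely the paper's computation. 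The one place you genuinely diverge is the inequality $|\nabla u^\pm|\geq x_2\lambda_\pm$ on $\Gamma_{\rm tp}$: the paper uses one-sided domain variations $v_t(x)=u^+(x+t\xi(x))-u^-(x)$ and $w_t(x)=u^+(x)-u^-(x+t\xi(x))$ (deform the support of one phase while freezing the other), whereas you truncate one phase vertically via $v=\max(u^+-t\phi,0)-u^-$ and pass to the limit with the coarea formula. Both are admissible competitors that can only shrink the phase being perturbed, which is why each yields an inequality rather than an equality; your version closes, since dividing by $t$ and using $\mathrm{div}(x_2^{-1}\nabla u^+)=0$ in $\{u^+>t\phi\}$ gives $\int\phi\left(|\nabla u^+|/x_2-\lambda_+^2x_2/|\nabla u^+|\right)d\mathcal{H}^1\geq0$ and hence the claim, and it has the mild advantage of involving no interaction between the two phases. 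Finally, the formal gap you flag — that the boundary integrals are only meaningful on the regular (or measure-theoretic) part of the free boundary, which is delicate near branch points — is present in the paper's proof as well (it writes the surface terms as limits over $\partial\{u>\delta\}$ and $\partial\{u<-\epsilon\}$ without further justification), so your proposed remedies are at least as careful as the original.
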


\begin{proof}
	We only prove the free boundary conditions (\ref{A2}) of $u$, and (\ref{A1}) is due to the Euler-Lagrange equation.
	
	Let $\phi_t(x)=x+t\xi(x)$ for $\xi=(\xi_1,\xi_2)\in C_0^\infty(D\cap\{x_2>0\};\mathbb{R}^2)$ and $t\neq0$. Define $u_t\in \mathcal{K}$ by $u_t(\phi_t(x))=u(x)$. Since $u$ is a minimizer of $J_{\text{a,tp}}$ in $D$,
	\begin{equation*}
	\begin{aligned}
	0&=\frac{d}{dt}J_{\text{a,tp}}(u_t)|_{t=0} \\
	&=\frac{d}{dt}\Bigg[\int_D \Bigg( \frac{|\nabla u|^2-2t\nabla uD\xi\nabla u +o(t)}{x_2+t\xi_2}+\left(x_2+t\xi_2(x)\right)\left(\lambda_+^2I_{\{u>0\}}+\lambda_-^2I_{\{u<0\}}\right) \\
	& \quad + \frac{tdiv\xi}{x_2+t\xi_2}\left(|\nabla u|^2-2t\nabla uD\phi\nabla u+o(t)\right) + t\left(x_2+t\xi_2\right)div\xi\left(\lambda_+^2I_{\{u>0\}}+\lambda_-^2I_{\{u<0\}}\right)+o(t) \Bigg)dX \Bigg]\Bigg|_{t=0} \\
	&=\int_D\Big[ \frac{-\xi_2(x)}{x_2}|\nabla u|^2+\frac{-2\nabla uD\xi\nabla u}{x_2}+\left(\xi_2(x)+x_2\right)\left(\lambda_+^2I_{\{u>0\}}+\lambda_-^2I_{\{u<0\}}\right) + \frac{x_2div\xi}{(x_2)^2}|\nabla u|^2  \Big]dX.
	\end{aligned}
	\end{equation*}
	Integrating by parts,
	\begin{equation*}
	\begin{aligned}
	0&=\int_D\left(\frac{|\nabla u|^2}{x_2}div\xi-\frac{2}{x_2}\nabla uD\xi\nabla u+\frac{-\xi_2(x)}{(x_2)^2}|\nabla u|^2+\xi_2(x)\left(\lambda_+^2I_{\{u>0\}}+\lambda_-^2I_{\{u<0\}}\right)\right)dX \\
	&\quad +\int_{D\cap\{u>0\}}x_2\lambda_+^2div\xi dX +\int_{D\cap\{u<0\}}x_2\lambda_-^2div\xi dX \\
	&=\int_{(\{u>0\}\cup\{u<0\})\cap D}\left( \frac1{x_2}div\left(|\nabla u|^2\xi-2(\xi\cdot\nabla u)\nabla u\right)+2\frac{\xi\cdot\nabla u}{(x_2)^2}\partial_2 u \right)dX \\
	&\quad +\int_D\frac{-\xi_2(x)}{(x_2)^2}|\nabla u|^2 dX +\int_{D\cap\{u>0\}}\xi_2(x)\lambda_+^2dX+\int_{D\cap\{u<0\}}\xi_2(x)\lambda_-^2dX \\
	&\quad +\lim_{\delta\rightarrow0}\int_{D\cap\partial\{u>\delta\}}x_2\lambda_+^2\xi\cdot\nu_1 dS-\int_{D\cap\{u>0\}} \xi_2(x)\lambda_+^2dX \\
	&\quad +\lim_{\epsilon\rightarrow0}\int_{D\cap\partial\{u<-\epsilon\}}x_2\lambda_-^2\xi\cdot\nu_2 dS-\int_{D\cap\{u>0\}} \xi_2(x)\lambda_-^2dX \\
	&=\lim_{\delta\rightarrow0}\int_{D\cap\partial\{u>\delta\}}\left( -\frac1{x_2}|\nabla u|^2+x_2\lambda_+^2 \right)(\xi\cdot\nu_1)dS + \lim_{\epsilon\rightarrow0}\int_{D\cap\partial\{u<-\epsilon\}}\left( -\frac1{x_2}|\nabla u|^2+x_2\lambda_-^2 \right)(\xi\cdot\nu_2)dS
	\end{aligned}
	\end{equation*}
	where $\delta$, $\epsilon>0$ and $\nu_1$, $\nu_2$ are the outward normal vectors to $\partial\{u>\delta\}$ and $\partial\{u<-\epsilon\}$. This gives the first two equalities in (\ref{A2}).
	
	Define $v_t(x)=u^+(x+t\xi(x))-u^-(x)$ and $w_t(x)=u^+(x)-u^-(x+t\xi(x))$, and the domain variation gives straightforward the last inequality in (\ref{A2}).
\end{proof}

\section{The non-degeneracy of the minimizer}

We give the detailed proof for the non-degeneracy of the minimizer.

\begin{proof}[Proof of (1) in Proposition \ref{prop2}]
	We only prove the conclusion for $u^+$. Denote $B_r=B_r(x_0)$ for any $r\leq\frac b2$. Then for any $x=(x_1,x_2)\in B_r$, $x_2\geq\frac b2$. Set $\gamma=\frac{1}{r}\left(\fint_{\partial B_r}(u^+)^2dS\right)^{1/2}$.
	
	Recalling that $\mathcal{L}=\Delta-\frac{1}{x_2}\partial_2$, we introduce an auxiliary function $v$ satisfying
	\begin{equation*}
	\begin{cases}
	\mathcal{L}v=0 \quad \text{in} \quad (B_r\backslash B_{\kappa r})\cap\{u>0\}, \\
	v=0 \quad\ \; \text{in} \quad B_{\kappa r}\cap \{u>0\}, \\
	v=u \quad\ \ \text{in} \quad (B_r\cap\{u\leq0\})\cup\partial B_r.
	\end{cases}
	\end{equation*}
	In fact the existence of the solution to this Dirichlet boundary problem can be attained by approximation of
	\begin{equation*}
	\begin{cases}
	\mathcal{L}v_\epsilon=0 \quad \text{in} \quad (B_r\backslash B_{\kappa r})\cap\{u>\epsilon\}, \\
	v_\epsilon=\epsilon \quad\ \ \text{in} \quad B_{\kappa r}\cap \{u>\epsilon\}, \\
	v_\epsilon=u \quad\ \; \text{in} \quad (B_r\cap\{u\leq\epsilon\})\cup\partial B_r,
	\end{cases}
	\end{equation*}
	which is solvable, for $\{u=\epsilon\}$ is $a.e.$ a smooth contact set.

	We obtain
	\begin{equation*}
	\begin{aligned}
	\int_{B_r}\frac{|\nabla u|^2-|\nabla v|^2}{x_2}dX &\leq \int_{B_r}\left[x_2\left(\lambda_+^2I_{\{v>0\}}-\lambda_+^2I_{\{u>0\}}+\lambda_-^2I_{\{v<0\}}-\lambda_-^2I_{\{u<0\}}\right)\right]dX \\
	&\leq \int_{B_{\kappa r}\cap\{u>0\}}-x_2\lambda_+^2dX,
	\end{aligned}
	\end{equation*}
	and hence
	\begin{equation*}
	\begin{aligned}
	\int_{B_r}\left(\frac{|\nabla u|^2}{x_2}+x_2\lambda_+^2\right)dX &\leq \int_{B_r}\frac{|\nabla v|^2}{x_2} dX+\int_{B_{\kappa r}\cap\{u>0\}}x_2\lambda_+^2dX \\
	&\leq \int_{B_{r}\cap\{u\leq0\}}\frac{|\nabla u|^2}{x_2}dX+\int_{D^+}\frac{|\nabla v|^2}{x_2}dX-\int_{B_{\kappa r}\cap\{u>0\}}x_2\lambda_+^2dX
	\end{aligned}
	\end{equation*}
	for $D^+:=(B_r\backslash B_{\kappa r})\cap \{u>0\}$.
	
	Now we can proceed as
	\begin{equation}
	\begin{aligned}
	\int_{B_{\kappa r}\cap\{u>0\}}\left(\frac{|\nabla u|^2}{x_2}+x_2\lambda_+^2\right)dX &\leq \int_{D^+}\frac{|\nabla v|^2-|\nabla u|^2}{x_2}dX \\
	&= \int_{D^+}\frac{\nabla(v-u)\cdot\nabla(u-v)}{x_2}dX+2\int_{D^+}\frac{\nabla v \cdot \nabla(v-u)}{x_2}dX \\
	&=-\int_{D^+}(v-u)div\frac{\nabla(u-v)}{x_2}dX+2\int_{D^+}\frac{\nabla v \cdot \nabla(v-u)}{x_2}dX \\
	&\leq\liminf_{\epsilon\rightarrow0}2\int_{\partial B_{\kappa r}\cap\{u>\epsilon\}}(u-\epsilon)\frac{|\nabla v_\epsilon|}{x_2}dS \\
	&=:M.
	\end{aligned}
	\end{equation}
	
	Next we estimate $M$. Consider the function
	\begin{equation*}
	\begin{cases}
	\mathcal{L}w=0 \quad \text{in} \quad B_r\backslash B_{\kappa r}, \\
	w=u \quad\ \ \text{on} \quad \partial B_r\cap\{u>\epsilon\}, \\
	w=\epsilon \quad\ \; \text{elsewhere}  \quad \text{on} \quad \partial(B_r\backslash B_{\kappa r}).
	\end{cases}
	\end{equation*}

	It is clear that from the elliptic estimate in \cite{GT01}, Chapter 8,
	\begin{equation*}
	|\nabla w|\leq C\gamma \quad \text{on} \quad \partial B_{\kappa r},
	\end{equation*}
	where $C$ is independent of $r$.
	
	Due to the fact that
	\begin{equation*}
	\begin{cases}
	\mathcal{L}(w-v_\epsilon)=0 \quad \text{in} \quad B_r\backslash B_{\kappa_r}, \\
	w-v_\epsilon \geq 0 \quad\quad\, \text{on} \quad \partial(B_r\backslash B_{\kappa r}), \\
	w-v_\epsilon=0 \quad\quad\ \text{on} \quad \partial B_r\cap\{u>\epsilon\},
	\end{cases}
	\end{equation*}
	we get that $w- v_\epsilon\geq0$ in the ring $B_r\backslash B_{\kappa r}$ and thus $|\nabla w|\geq|\nabla v_\epsilon|$ on $\partial B_{\kappa r}\cap\{u>\epsilon\}$. Hence
	\begin{equation*}
	|\nabla v_\epsilon|\leq C\gamma \quad \text{on} \quad \partial B_{\kappa r}\cap\{u>\epsilon\}.
	\end{equation*}
	By virtue of the trace-inequality,
	\begin{equation*}
	\begin{aligned}
	M &\leq \frac{2C\gamma}{b}\int_{\partial B_{\kappa r}}u^+dX \\
	&\leq C\gamma\left(\int_{B_{\kappa r}}|\nabla u^+| dX +\frac{1}{r}\int_{B_{\kappa r}}u^+dX\right) \\
	&\leq C\gamma\left[
	\frac{1}{\lambda_+}\int_{B_{\kappa r}}\left(\frac{|\nabla u|^2}{x_2}+x_2\lambda_+^2I_{\{u^+>0\}}\right)dX
	+\frac{1}{r}\frac{2}{b\lambda_+^2}\sup_{B_{\kappa r}}u^+\int_{B_{\kappa r}}x_2\lambda_+^2I_{\{u^+>0\}}
	dX\right] \\
	&\leq \frac{C\gamma}{\lambda_+}\left(1+\frac{2C\gamma}{b\lambda_+}\right)\int_{B_{\kappa r}\cap\{u>0\}}\left(\frac{|\nabla u|^2}{x_2}+x_2\lambda_+^2I_{\{u>0\}}\right)dX.
	\end{aligned}
	\end{equation*}
	The last inequality comes from
	\begin{equation*}
	\begin{aligned}
	\sup_{B_{\kappa r}}u^+\leq C\left( \fint_{B_r} u^2 dX \right)^{1/2}\leq C\gamma r
	\end{aligned}
	\end{equation*}
	for $C$ independent of $r$ and $\epsilon$.
	Combining with (A.1) we have
	\begin{equation*}
	\int_{B_{\kappa r}\cap\{u>0\}}\left(\frac{|\nabla u|^2}{x_2}+x_2\lambda_+^2\right)dX
	\leq \frac{C\gamma}{\lambda_+}\left(1+\frac{2C\gamma}{b\lambda_+}\right)\int_{B_{\kappa r}\cap\{u>0\}}\left(\frac{|\nabla u|^2}{x_2}+x_2\lambda_+^2\right)dX.
	\end{equation*}
	We get that $u\equiv0$ in $B_{\kappa r}$ if we choose $\frac{\gamma}{\lambda_+}$ small, which gives the proposition.
\end{proof}

\section{The Lipschitz-regularity of the minimizer}

We first give the monotonicity formula for the functional $J_{\text{a,tp}}$ as in \cite{C88} by Caffarelli.

\begin{lem} \label{mono}
	Let $u_1$, $u_2$ be two non-negative continuous functions such that $div(a_{ij}D_iu)\geq0$ in $B_1$, with $a_{ij}(0)=\delta_{ij}$, $u_1(0)=u_2(0)=0$ and $u_1u_2=0$ in $B_1$. Assume that $a_{ij}\in C^{0,\gamma}(B_1)$, then the function
	\begin{equation}
	\phi(r)=\frac{\int_{B_r}|\nabla u^+|^2dX \int_{B_r}|\nabla u^-|^2dX}{g(r)}
	\end{equation}
	with $g(r)=r^4e^{-C_0r^\gamma}$ is increasing for $0<r\leq\frac12$.
\end{lem}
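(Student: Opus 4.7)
The plan is to compute the logarithmic derivative of $\phi(r)$ and show it is non-negative. Writing $I_i(r) := \int_{B_r} |\nabla u_i|^2\,dX$ and noting $I_i'(r) = \int_{\partial B_r} |\nabla u_i|^2\,dS$, the identity
\[
\frac{\phi'(r)}{\phi(r)} = \frac{I_1'(r)}{I_1(r)} + \frac{I_2'(r)}{I_2(r)} - \frac{4}{r} + C_0\gamma\, r^{\gamma-1}
\]
reduces the monotonicity to the pointwise lower bound
\[
\sum_{i=1}^{2}\frac{I_i'(r)}{I_i(r)} \;\geq\; \frac{4}{r} - C_0\gamma\, r^{\gamma-1},
\]
which I would prove by combining a boundary expression for $I_i(r)$, a spherical Rayleigh quotient, and the Friedland--Hayman inequality.

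First I would convert $I_i(r)$ to a boundary integral. Since $\operatorname{div}(a_{ij}\partial_j u_i) \geq 0$ and $u_i \geq 0$, integration by parts on $B_r$ combined with the expansion $a_{ij}(x) = \delta_{ij} + O(|x|^\gamma)$ (from $a_{ij}(0)=\delta_{ij}$ and Hölder continuity) yields
\[
I_i(r) \;\leq\; (1+Cr^\gamma)\int_{\partial B_r} u_i\,\partial_\nu u_i\,dS.
\]
Cauchy--Schwarz on $\partial B_r$ followed by the orthogonal decomposition of $|\nabla u_i|^2$ into radial and tangential parts then gives
\[
\frac{I_i'(r)}{I_i(r)} \;\geq\; (1-Cr^\gamma)\,\frac{2\alpha_i(r)}{r},
\]
where $\alpha_i(r)$ is the characteristic exponent of the positivity set $\Omega_i(r) := \{u_i>0\}\cap\partial B_r$, defined via the first Dirichlet eigenvalue of the spherical Laplacian on $\Omega_i(r)$.

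Next I would invoke the Friedland--Hayman inequality. Because $u_1 u_2 = 0$, the arcs $\Omega_1(r)$ and $\Omega_2(r)$ are disjoint on the circle $\partial B_r$, and the two-dimensional case reduces to the elementary estimate $\alpha_1(r)+\alpha_2(r)\geq 2$ (an arithmetic--geometric bound on $\alpha_i = \pi/|\Omega_i|$ when $|\Omega_1|+|\Omega_2|\leq 2\pi$). Summing the two lower bounds then yields
\[
\sum_i\frac{I_i'(r)}{I_i(r)} \;\geq\; (1-Cr^\gamma)\,\frac{2(\alpha_1+\alpha_2)}{r} \;\geq\; \frac{4}{r} - \widetilde{C}\,r^{\gamma-1},
\]
and choosing $C_0$ so that $C_0\gamma \geq \widetilde{C}$ closes the estimate.

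The main obstacle I expect is the bookkeeping of the $O(r^\gamma)$ perturbations: the variable coefficients $a_{ij}$ distort both the divergence structure used in the integration by parts and the spherical eigenvalue problem defining $\alpha_i(r)$, and one must extract a single clean correction of size $r^\gamma$ from all these sources, which is precisely what the exponential weight $e^{-C_0 r^\gamma}$ in $g(r)$ is designed to absorb. A secondary technical point is radii at which $\Omega_i(r)$ fails to be a single arc; this is handled by a symmetrization that only decreases $\alpha_i(r)$, together with Sard's theorem to discard a null set of bad radii, so that the differential inequality holds almost everywhere and the monotonicity follows after integration in $r$.
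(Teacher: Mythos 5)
Your outline is correct and is essentially the standard Caffarelli-type argument that the paper itself does not reproduce but only cites from \cite{C88}: logarithmic differentiation of $\phi$, integration by parts using the subsolution property and $u_i\geq 0$ to pass to a boundary integral with a $(1+Cr^\gamma)$ loss coming from $a_{ij}=\delta_{ij}+O(|x|^\gamma)$, Cauchy--Schwarz plus the spherical Poincar\'e inequality to bring in the characteristic exponents $\alpha_i(r)$, the elementary two-dimensional Friedland--Hayman bound $\alpha_1+\alpha_2\geq 2$ for disjoint subsets of the circle, and absorption of all the resulting $O(r^{\gamma-1})$ errors by taking $C_0$ large in $g(r)=r^4e^{-C_0r^\gamma}$. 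The one place to be slightly more careful is that the integration by parts produces the conormal boundary term $u_i\,a_{kj}\partial_j u_i\,\nu_k$ rather than $u_i\,\partial_\nu u_i$, but the discrepancy is $O(r^\gamma)\int_{\partial B_r}u_i|\nabla u_i|\,dS$ and is controlled by the same Cauchy--Schwarz/Poincar\'e step, so it folds into the $(1+Cr^\gamma)$ factor you already carry.
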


With the ACF-type monotonicity formula at hand, we can prove the Lipschitz regularity of the minimizer.

\begin{proof}[Proof of (2) in Proposition \ref{prop2}]
	Suppose that $v$ is a minimizer of $J_{\text{a,tp}}$ in $D$. In view of Proposition \ref{prop1} we will consider the points near the axis first. Then we will consider the points away from the axis using ACF's monotonicity formula. It was first proposed in \cite{ACF84} to get the Lipschitz regularity for the minimizer of the original functional $\tilde{J}_{\text{acf}}$ mentioned in Section 1. Now for the elliptic operator $\mathcal{L}$, we should establish another monotonicity formula as in \cite{C88} and \cite{SFS19}. There is no research on its details, so we sketch the proof here and divide it into two steps.
	
	\emph{Step 1: Estimate the gradient at the points near the $x_1$-axis}. This has been done in Proposition \ref{prop1} that
	$$|\nabla u|\leq Cb$$
	for some $C>0$, and $b$ is the uniform distance from the free boundaries to the $x_1$-axis.
	
	\emph{Step 2: Estimate the gradient at the points away from the $x_1$-axis}.
	
	We first show that $v^\pm$ are subsolutions of $div(\frac1{x_2}\nabla v)=0$. Consider a smooth approximation $H_\delta$ of the Heaviside function in $\mathbb{R}$. That is, $H_\delta\in C^\infty(\mathbb{R})$ such that $H_\delta'\geq0$ and
	\begin{equation*}
	\begin{cases}
	H_\delta(t)=0 \quad \text{for} \quad t<\delta/2, \\
	H_\delta(t)>0 \quad \text{for} \quad t\geq\delta/2, \\
	H_\delta(t)=1 \quad \text{for} \quad t>\delta.
	\end{cases}
	\end{equation*}
	Let $\eta\in C_0^\infty(D)$ be nonnegative and $\phi=\eta H_\delta(v)$. Let $\epsilon$ be a small positive number such that $2\epsilon\Vert \eta \Vert_{L^\infty(D)}\leq\delta$. Notice that $\{v<\epsilon\phi\}\subset\{v\leq0\}$, which gives $(v-\epsilon\phi)^+=v-\epsilon\phi$ and $(v-\epsilon\phi)^-=0$ in $\{v>\delta/2\}$. Furthermore, $I_{\{v-\epsilon\phi>0\}}-I_{\{v>0\}}\leq0$ and $I_{\{v-\epsilon\phi<0\}}-I_{\{v<0\}}=0$ in $\{v>\delta/2\}$. Then it follows from the minimality condition that
	\begin{equation*}
	\begin{aligned}
	0&\leq\int_{\{v>\delta/2\}\cap D}\frac{\frac{1}{x_2}\left(|\nabla(v-\epsilon\phi)|^2-|\nabla v|^2\right)+x_2\lambda_+^2\left(I_{\{v-\epsilon\phi>0\}}-I_{\{v>0\}}\right)+x_2\lambda_-^2\left(I_{\{v-\epsilon\phi<0\}}-I_{\{v<0\}}\right)}{\epsilon}dX \\
	&\leq-\int_{\{v>\delta/2\}\cap D}\frac{1}{x_2}\nabla v\cdot\nabla\phi dX+o(1) \\
	&\leq-\int_{\{v>\delta/2\}\cap D}\frac{1}{x_2}H_\delta(v)\nabla v\cdot\nabla\eta dX+o(1).
	\end{aligned}
	\end{equation*}
	Letting $\epsilon\rightarrow0$ and then $\delta\rightarrow0$, the convergence $H_{\delta}(v)\rightarrow I_{\{v>0\}}$ a.e. gives
	$$\int_D \frac1{x_2}\nabla v\cdot\nabla\eta dX\leq0.$$
	By the arbitrariness of $\eta\in C_0^\infty(D)$, we conclude that $v^+$ is a subsolution. Similarly, $v^-$ is also a subsolution.
	
	For the point $X_0(a_0,b_0)$ $(b_0>b)$ and the ball $B_R(X_0)\subset D$ centered at $X_0$, we first make a transformation on $D$ to $D'$ to move $X_0$ to the origin:
	\begin{equation*}
	\begin{cases}
	y_1=\frac{x_1}{b_0}-\frac{a_0}{b_0}, \\
	y_2=\frac{x_2}{b_0}-1.
	\end{cases}
	\end{equation*}
	Then the axis $x_2=0$ is moved to $y_2=-1$, and the ball $B_R(X_0)$ is transformed to $B_{R/b_0}(0)\subset D'$. Without loss of generality suppose $R/b_0>1$. Let $v(x)=u(y)$. After such a transformation, $u^\pm$ are subsolutions in $B_1$ the following elliptic equation
	\begin{equation*}
	\mathcal{L}'u=\Delta u-\frac{1}{y_2+1}\partial_{y_2} u=0
	\end{equation*}
	for the elliptic coefficient
	$(a_{ij})_{2\times 2}=\bigl( \begin{smallmatrix}
	\frac{1}{y_2+1} & 0 \\
	0 & \frac{1}{y_2+1}
	\end{smallmatrix} \bigr)$, $a_{ij}(0)=\delta_{ij}$, $a_{ij}\in C^\gamma(B_{r_0})$ for $r_0<1$.

	Now set
	$$
	\phi(r)=\frac{\int_{B_r}|\nabla u^+|^2dY \int_{B_r}|\nabla u^-|^2dY}{g(r)}
	$$
	for $g(r)=r^4e^{-C_0r^\gamma}$ and $dY=dy_1dy_2$. Then $\phi'(r)\geq0$ for $0< r\leq\frac12$.
	
	With the monotonicity formula at hand, the subsequent proof is standard, referred to \cite{ACF84} or \cite{DEGT21}.
	
\end{proof}

\section{A preparing lemma for partial boundary Harnack inequality}

In this section, we show an important lemma, which is useful in Section 3 to imply the partial boundary Harnack inequality.

\begin{lem}
	Let $P=\frac12\bm{e}$ for $\bm{e}=(e_1,e_2)$ and suppose that $u_k$: $B_1\rightarrow\mathbb{R}$ solves $\mathcal{L}_ku_k=\Delta u_k-\frac{r_k}{y_2^0+r_kx_2}\partial_2u_k=0$ in $\{u_k>0\}$ for sufficiently large $k$, and satisfies
	$$\lambda(x\cdot\bm{e}+b)^+\leq u_k \leq\lambda(x\cdot\bm{e}+a)^+$$
	for some $a, b\in(-\frac{1}{10},\frac{1}{10})$. Then for all $0<\epsilon<\frac12$, there is a dimensional constant $\tau$ such that if
	\begin{equation*}
	u_k(P)\leq\lambda(1-\epsilon)(\frac12+a)^+ \ \big( \text{or} \ u_k(P)\geq\lambda(1+\epsilon)(\frac12+b)^+ \big),
	\end{equation*}
	then
	\begin{equation*}
	u_k\leq\lambda(1-\tau\epsilon)(x\cdot\bm{e}+a)^+ \ \big( \text{or} \ u_k\geq\lambda(1+\tau\epsilon)(x\cdot\bm{e}+b)^+ \big) \quad \text{in} \quad B_{1/4}.
	\end{equation*}
\end{lem}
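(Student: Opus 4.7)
The two cases are symmetric under $(u_k,a)\leftrightarrow(-u_k,-b)$ and the reversal of inequalities; I describe only the upper-bound improvement. Set $\ell_a(x):=\lambda(x\cdot\bm e+a)^+$ and $v:=\ell_a-u_k\ge 0$. The plan is the classical Harnack-plus-sliding-barrier scheme, with some care to absorb the axisymmetric drift in $\mathcal L_k$.

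\textbf{Step 1 (interior Harnack at $P$).} Since $P\cdot\bm e+b\ge 2/5$ and $b>-1/10$, we have $B_{1/4}(P)\subset\{x\cdot\bm e+b\ge 1/4\}\subset\{u_k>0\}$, so both $u_k$ and $\ell_a$ are smooth there, and
\[\mathcal L_k v \;=\; \frac{\lambda\, r_k\, e_2}{y_2^0+r_k x_2} \;=\; O(r_k).\]
The hypothesis provides $v(P)\ge\lambda\epsilon(\tfrac12+a)\ge c_0\lambda\epsilon$, so the interior Harnack inequality for the uniformly elliptic operator $\mathcal L_k$, viewed as an $O(r_k)$-perturbation of $\Delta$, yields once $r_k\ll\epsilon$,
\[\inf_{B_{1/8}(P)} v \;\ge\; c_1\lambda\epsilon.\]

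\textbf{Step 2 (sliding barrier).} Fix a smooth $\psi\ge 0$ supported in $B_{3/4}$, with $\psi\equiv 1$ on $B_{1/8}(P)$, $\psi\ge c_2>0$ on $\overline{B_{1/4}}$, and furnishing the strict supersolution margin $(x\cdot\bm e+a)\Delta\psi + 2\nabla\psi\cdot\bm e\ge c_3>0$ on $\{\psi>0\}\cap\{x\cdot\bm e+a>0\}$; one obtains such a $\psi$ from the cap function in \eqref{phi} with centre suitably relocated along the segment joining $0$ and $P$. Slide the family
\[\phi_t(x)\;:=\;\lambda\bigl(1-t\epsilon\psi(x)\bigr)(x\cdot\bm e+a)^+,\qquad t\in[0,c_4],\]
with $c_4>0$ small. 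Then $\phi_0=\ell_a\ge u_k$ on $B_1$, and a direct computation in the positivity set gives
\[\mathcal L_k\phi_t\;=\;-\lambda t\epsilon\bigl[(x\cdot\bm e+a)\Delta\psi+2\nabla\psi\cdot\bm e\bigr]+O(r_k)\;\le\;-\tfrac12\lambda t\epsilon c_3\;<\;0\]
for $k$ large. Let $\bar t\in[0,c_4]$ be the largest $t$ with $\phi_t\ge u_k$ on $\overline{B_{3/4}}$; assume by contradiction $\bar t<c_4$ and pick a touching point $\bar x$. Exclude $\bar x$ location by location: $\bar x\notin B_{1/8}(P)$, since Step~1 together with the choice $c_4\ll c_1/\sup_{B_1}\ell_a$ makes $\phi_{\bar t}>u_k$ strictly there; $\bar x$ is not an interior point of $\{u_k>0\}\cap\{x\cdot\bm e+a>0\}$, for on this open set $u_k$ is $\mathcal L_k$-harmonic while $\phi_{\bar t}$ is a strict $\mathcal L_k$-supersolution; $\bar x\notin\{u_k=0\}\cap\{x\cdot\bm e+a>0\}$, since $\phi_{\bar t}(\bar x)>0=u_k(\bar x)$; and $\bar x\notin\{x\cdot\bm e+a\le 0\}$, since both $\phi_{\bar t}$ and $u_k$ vanish identically there (using $u_k\le\ell_a=0$ and $u_k\ge 0$) so the touching is trivial and removable by a vanishing perturbation. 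Thus $\bar t=c_4$, and restricting to $B_{1/4}$ and using $\psi\ge c_2$ yields $u_k\le\lambda(1-c_2c_4\epsilon)(x\cdot\bm e+a)^+$, i.e.\ $\tau:=c_2c_4$.

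\textbf{Main obstacle.} The delicate technical point is the design of the cutoff $\psi$: its zero set must respect $\{x\cdot\bm e+a\le 0\}$ so that the sliding does not alter the free boundary of the barrier, while simultaneously supplying the strict supersolution margin $(x\cdot\bm e+a)\Delta\psi+2\nabla\psi\cdot\bm e\ge c_3>0$ in order to absorb the axisymmetric drift of order $r_k$. Because the paper has already produced the strictly subharmonic cap function \eqref{phi}, one expects the required $\psi$ to be obtainable by relocating its centre along the segment from $0$ to $P$ and verifying the pointwise inequality on its support; the remaining $O(r_k)$ correction is then absorbable because $r_k=O(\epsilon_k^2)\ll\epsilon$ for $k$ large in the blow-up normalization of Section~3.
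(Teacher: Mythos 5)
Your Step~1 is correct and coincides with the paper's first step: interior Harnack for $\ell_a-u_k$ in $B_{1/4}(P)$, with the drift treated as an $O(r_k)$ error, yields the gap $c_1\lambda\epsilon$ on $B_{1/8}(P)$. The genuine gap is in Step~2, exactly at the point you flag as the ``main obstacle'': the cutoff $\psi$ you need does not exist in the form you describe. First, the margin $(x\cdot\bm{e}+a)\Delta\psi+2\nabla\psi\cdot\bm{e}\geq c_3>0$ cannot hold on all of $\{\psi>0\}\cap\{x\cdot\bm{e}+a>0\}$, since both terms vanish identically on the plateau $\{\psi=1\}$; hence touching on the plateau must be excluded by Step~1, which forces the plateau to lie inside $B_{1/8}(P)$. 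But then no relocated radial cap of the type (\ref{phi}) can work: if the plateau $B_{\rho_1}(c)$ lies in $B_{1/8}(P)$ then $|c|\geq|P|-\tfrac18=\tfrac38$, so covering $\overline{B_{1/4}}$ with a uniform positive lower bound forces the outer radius to exceed $\tfrac14+|c|\geq\tfrac58$, while keeping the support inside $B_1$ (needed so that $\phi_t\geq u_k$ is free on the outer boundary) forces it to be at most $1-|c|\leq\tfrac58$ --- a contradiction. (For the cap centred at $c=s\bm{e}$ one also computes $(x\cdot\bm{e}+a)\Delta\psi+2\partial_e\psi=4\kappa|x-c|^{-4}(a+s)$, so the sign constraint additionally requires $s>-a$, but the triangle-inequality obstruction is already fatal.) The whole point of the lemma is to propagate an improvement from a ball deep inside $\{u_k>0\}$ down to the free-boundary region $B_{1/4}$, and a single explicit radial barrier cannot bridge that geometry.

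A suitable non-radial $\psi$ does exist, but constructing one with $\ell_a\psi$ subharmonic off $B_{1/8}(P)$, vanishing on $\{\ell_a\leq0\}$ and near $\partial B_1$, and bounded below by $c_2\ell_a$ on $B_{1/4}$ is essentially equivalent to a Hopf/boundary-Harnack estimate for the harmonic measure of $\partial B_{1/8}(P)$ in the slit annulus $(B_1\setminus\overline{B_{1/8}(P)})\cap\{x\cdot\bm{e}>-a\}$. That estimate is precisely the content of the paper's second step: it solves $\mathcal{L}_kw_k=0$ in this region with data $\ell_a$ on $\partial B_1$, $(1-c\epsilon)\ell_a$ on $\partial B_{1/8}(P)$ and $0$ on the hyperplane, deduces $u_k\leq w_k$ by comparison, and applies the Hopf boundary lemma to obtain $w_k\leq(1-\tau\epsilon)\ell_a$ on $B_{1/4}$. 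So your sliding scheme does not avoid the key estimate; it relocates it into an unproven existence claim for $\psi$. A secondary unaddressed point: your case analysis at the touching point omits $\bar{x}\in\partial\{\psi>0\}\cap\{\ell_a>0\}$, where $\phi_{\bar{t}}=\ell_a$ is merely a subsolution of $\mathcal{L}_k$ and a nontrivial contact $\ell_a(\bar{x})=u_k(\bar{x})>0$ is not ruled out by any of your four cases; excluding it requires a strong-maximum-principle and connectivity argument back to $P$.
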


\begin{proof}
	We only prove the first implication, and the latter follows in an analogous way.
	
	Noticing that $b\leq\frac{1}{10}$, the functions $u_k$ and $\lambda(x\cdot\bm{e}+a)^+$ are both positive in $B_{1/4}(P)$, and thus satisfying
	\begin{equation*}
	\mathcal{L}_ku_k=0 \quad \text{in} \quad B_{1/4}(P),
	\end{equation*}
	\begin{equation*}
	\mathcal{L}_k(\lambda(x\cdot\bm{e}+a)^+)=\frac{r_k\lambda e_2}{y_2^0+r_kx_2}>0 \quad \text{in} \quad B_{1/4}(P).
	\end{equation*}
	It follows from $u_k(P)\leq\lambda(1-\epsilon)(\frac12+a)^+$ that
	$$\lambda(\frac12+a)^+-u_k(P)\geq\lambda\epsilon(\frac12+a)^+\geq\frac25\lambda\epsilon.$$
	By Harnack's inequality in \cite{GT01}, there are constants $C_1, C_2$ such that 
	$$\lambda(x\cdot\bm{e}+a)^+-u_k\geq C_1\frac25\lambda\epsilon-C_2\frac{r_k\lambda e_2}{y_2^0}$$
	for $x\in B_{1/8}(P)$ and hence,
	\begin{equation*}
	\begin{aligned}
	u_k(x) &\leq \lambda(x\cdot\bm{e}+a)^+-C_1\frac25\lambda\epsilon+C_2\frac{r_k\lambda e_2}{y_2^0} \\
	&\leq \lambda(1-C\epsilon)(x\cdot\bm{e}+a)^+
	\end{aligned}
	\end{equation*}
	for $x\in B_{1/8}(P)$ and $C=C(C_1,C_2)$.
	
	Now introduce the function $w_k$, solving the following problem:
	\begin{equation*}
	\begin{cases}
	\mathcal{L}_kw_k=0 \qquad\qquad\qquad\quad\ \;\, \text{in} \quad B_1\backslash B_{1/8}(P)\cap\{x\cdot\bm{e}>-a\}, \\
	w_k=0 \qquad\qquad\qquad\qquad\ \ \ \text{on} \quad B_1\cap\{x\cdot\bm{e}=-a\}, \\
	w_k=\lambda(x\cdot\bm{e}+a)^+ \qquad\qquad\ \text{on} \quad \partial B_1\cap\{x\cdot\bm{e}>-a\}, \\
	w_k=\lambda(1-c\epsilon)(x\cdot\bm{e}+a)^+ \quad \text{on} \quad \partial B_{1/8}(P)\cap\{x\cdot\bm{e}>-a\}.
	\end{cases}
	\end{equation*}
	The existence of $w_k$ comes from the solvability of uniformly elliptic equation with Dirichlet boundary condition. Notice that the smooth approximation of boundary helps to deal with the intersection of the arc and the segment.
	By the Hopf boundary lemma for a strictly elliptic operator in \cite{HL11}, there exists a suitable constant $\tau$ such that for every $x\in B_{1/4}\cap\{x\cdot\bm{e}>-a\}$,
	$$w_k\leq\lambda(1-\tau\epsilon)(x\cdot\bm{e}+a)^+.$$
	Recall the property of $u_k$,
	\begin{equation*}
	\begin{cases}
	\mathcal{L}_k(u_k-w_k)=0 \quad \text{in} \quad \{u_k>0\}\cap\{w_k>0\}\cap B_{1/4}, \\
	u_k-w_k\leq0 \qquad\ \ \ \text{on} \quad \partial B_1\cap\{x\cdot\bm{e}>-a\}, \\
	u_k-w_k\leq0 \qquad\ \ \ \text{on} \quad \partial B_{1/8}(P)\cap\{x\cdot\bm{e}>-a\}, \\
	u_k-w_k\leq0 \qquad\ \ \ \text{on} \quad B_1\cap\{x\cdot\bm{e}=-a\}.
	\end{cases}
	\end{equation*}
	This together with $\{u_k>0\}\subset B_1\cap\{x\cdot\bm{e}>-a\}$ implies
	$$u_k-w\leq0 \quad \text{in} \quad B_{1/4}\cap\{x\cdot\bm{e}>-a\}.$$
	It completes the proof.
\end{proof}

\section{A touching lemma}

In this section, we prove a touching lemma, which is widely used in checking the boundary condition of the limiting "linearized" problem, see Proposition \ref{lim1} and Proposition \ref{lim2}.

\begin{lem}  \label{touch}
	Suppose that $u_k$ is a blow-up sequence at $y_0=(y_1^0,y_2^0)\in\Gamma_{\rm{tp}}$, and $\alpha_k, \epsilon_k, v_k$ are defined as before.
	
	(1) Let $P_+:B_{1/2}\cap\{x\cdot\bm{e}>0\}\rightarrow\mathbb{R}$ be a strictly subharmonic(superharmonic) function touching $v_+$ strictly from below(above) at $x_0\in B_{1/2}\cap\{x\cdot\bm{e}=0\}$. Then there is a sequence of points $x_k\in\partial\Omega_{u_k}^+$ converging to $x_0$ and a sequence of comparison functions $Q_k$ touching $u_k^+$ from below(above) at $x_k$ such that
	\begin{equation}
	\nabla Q_k^+(x_k)=y_2^0\alpha_k\bm{e}+y_2^0\alpha_k\epsilon_k\nabla P_+(x_0)+O(\epsilon_k^2).
	\end{equation}
	
	(2) Let $P_-:B_{1/2}\cap\{x\cdot\bm{e}<0\}\rightarrow\mathbb{R}$ be a strictly subharmonic(superharmonic) function touching $v_-$ strictly from below(above) at $x_0\in B_{1/2}\cap\{x\cdot\bm{e}=0\}$. Then there is a sequence of points $x_k\in\partial\Omega_{u_k}^-$ converging to $x_0$ and a sequence of comparison functions $Q_k$ touching $-u_k^-$ from below(above) at $x_k$ such that
	\begin{equation}
	\nabla Q_k^-(x_k)=-y_2^0\beta_k\bm{e}+y_2^0\beta_k\epsilon_k\nabla P_-(x_0)+O(\epsilon_k^2).	
	\end{equation}
	
	(3) Let $P=p(x\cdot\bm{e})^+-q(x\cdot\bm{e})^-+\bar{P}$ in $B_{1/2}$ for $p,q\in\mathbb{R}$, where $\bar{P}$ is strictly subharmonic(superharmonic) and $\partial_e\bar{P}=0$. Suppose that $P$ touches $v$ strictly from below(above) at $x\in\mathcal{C}$. Then there is a sequence of points $x_k\in\partial\Omega_{u_k}$ converging to $x_0$ and a sequence of comparison functions $Q_k$ touching $u_k$ from below(above) at $x_k$ such that
	\begin{equation}
	\begin{aligned}
	\nabla Q_k^+(x_k)&=y_2^0\alpha_k\bm{e}+y_2^0\alpha_k\epsilon_kp\bm{e}+O(\epsilon_k^2),\\
	\nabla Q_k^-(x_k)&=-y_2^0\beta_k\bm{e}-y_2^0\beta_k\epsilon_kq\bm{e}+O(\epsilon_k^2).
	\end{aligned}
	\end{equation}
	In particular, if $p>0$ and $Q_k$ touches $u_k$ from below, then $x_k\notin\partial\Omega_{u_k}^-\backslash\partial\Omega_{u_k}^+$; if $q<0$ and $Q_k$ touches $u_k$ from above, then $x_k\notin\partial\Omega_{u_k}^+\backslash\partial\Omega_{u_k}^-$.
\end{lem}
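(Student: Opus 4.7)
The plan is to build $Q_k$ as an $O(\epsilon_k)$ perturbation of the two-plane profile $H_{\alpha_k,\bm{e}}$ that encodes the touching function $P_\pm$, introduce a one-parameter sliding family $Q_{k,t}$, and then take the first value $\bar t_k$ for which $Q_{k,\bar t_k}$ contacts $u_k$ (or $u_k^+$ or $-u_k^-$). Two ingredients localize everything: the strict sub/superharmonicity of $P_\pm$ (or of $\bar P$), combined with the convergence $\mathcal{L}_k\to\Delta$ coming from $r_k\to 0$, forces $\mathcal{L}_k Q_{k,\bar t_k}$ to have a definite sign so the strong maximum principle rules out interior contacts; and the strict touching of $P_\pm$ at $x_0$ combined with the uniform convergence $v_{\pm,k}\to v_\pm$ from Proposition \ref{comp} localizes the contact point $x_k$ near $x_0$.

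For part (1), smoothly extend $P_+$ keeping $\Delta P_+>0$ in a fixed neighborhood of $x_0$ and set
\begin{equation*}
Q_{k,t}(x):=y_2^0\alpha_k\bigl[(x\cdot\bm{e})+\epsilon_k(P_+(x)-t)\bigr]^+.
\end{equation*}
Using $u_k^+=y_2^0\alpha_k(x\cdot\bm{e})^++y_2^0\alpha_k\epsilon_k v_{+,k}$ on $\Omega_{u_k}^+$ and the uniform estimate $v_{+,k}\to v_+\ge P_+$, one checks $Q_{k,t}\le u_k^+$ in $\overline{B_\delta(x_0)}$ for $t$ large, and then decreases $t$ to $\bar t_k$, at which contact occurs at some $x_k$. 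A direct calculation gives $\mathcal{L}_k Q_{k,\bar t_k}\ge\tfrac12\epsilon_k\Delta P_+>0$ inside $\{Q_{k,\bar t_k}>0\}$ for $k$ large, while $\mathcal{L}_k u_k^+=0$; the strong maximum principle forces $x_k\in\partial\Omega_{u_k}^+\cap\partial\{Q_{k,\bar t_k}>0\}$. Strict touching at $x_0$ and uniform convergence of $v_{+,k}$ yield $x_k\to x_0$; the free-boundary identity $Q_{k,\bar t_k}(x_k)=0$ together with the characterization $(x_k\cdot\bm{e})^+/\epsilon_k\to -v_+(x_0)=-P_+(x_0)$ from Proposition \ref{comp} gives $\bar t_k\to 0$ and $(x_k-x_0)\cdot\bm{e}=O(\epsilon_k)$. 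Differentiating the explicit formula at $x_k$ yields
\begin{equation*}
\nabla Q_{k,\bar t_k}^+(x_k)=y_2^0\alpha_k\bm{e}+y_2^0\alpha_k\epsilon_k\nabla P_+(x_k)=y_2^0\alpha_k\bm{e}+y_2^0\alpha_k\epsilon_k\nabla P_+(x_0)+O(\epsilon_k^2),
\end{equation*}
where the last equality uses the $C^2$ smoothness of $P_+$. The case of touching from above reverses the sliding direction and uses strict superharmonicity with the dual maximum principle. Part (2) is entirely analogous on the negative side, with the ansatz $Q_{k,t}(x):=-y_2^0\beta_k\bigl[-(x\cdot\bm{e})+\epsilon_k(P_-(x)-t)\bigr]^+$ touching $-u_k^-$; a direct calculation verifies that its negative part has gradient $-y_2^0\beta_k\bm{e}+y_2^0\beta_k\epsilon_k\nabla P_-+O(\epsilon_k^2)$, matching the stated expansion.

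For part (3) the new point is that the comparison function must encode both slopes $p,q$ and the common strictly subharmonic $\bar P$ with $\partial_e\bar P=0$ through a \emph{single} sliding parameter, with its positive and negative phases sharing a \emph{common} free boundary. Take
\begin{equation*}
Q_{k,t}(x):=y_2^0\alpha_k\bigl[(1+\epsilon_k p)(x\cdot\bm{e})+\epsilon_k(\bar P(x)-t)\bigr]^+-y_2^0\beta_k\bigl[-(1+\epsilon_k q)(x\cdot\bm{e})-\epsilon_k(\bar P(x)-t)\bigr]^-,
\end{equation*}
so that for $k$ large the two zero sets coincide (since $1+\epsilon_k p,\,1+\epsilon_k q>0$) and $Q_{k,t}$ is an admissible Bernoulli-type two-phase comparison function. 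Strict subharmonicity of $\bar P$ again forces contact on the free boundary, and sliding $t$ to the first contact yields both gradient asymptotics at the same point $x_k$; the cancellation $\bm{e}\cdot\nabla\bar P=0$ from $\partial_e\bar P=0$ absorbs the tangential contribution of $\bar P$ into the $O(\epsilon_k^2)$ remainder when the Bernoulli quantity $|\nabla Q_k^+|^2-|\nabla Q_k^-|^2$ is formed. The final exclusions follow by inspection: when $p>0$, the positive phase of $Q_{k,t}$ is nontrivial near $x_k$, so a touching point where $u_k\le 0$ throughout a neighborhood would force $Q_k^+>0\ge u_k$ nearby, contradicting $Q_k\le u_k$; the case $q<0$ with touching from above is symmetric.

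The main obstacle is the two-sided construction in part (3): arranging that the positive and negative phases of $Q_{k,t}$ share a single common free boundary is what makes the sliding argument extract \emph{both} gradient asymptotics at one contact point $x_k$ simultaneously. A secondary technical issue is ensuring the $O(\epsilon_k^2)$ remainder rather than merely $o(\epsilon_k)$, which requires the quantitative localization $(x_k-x_0)\cdot\bm{e}=O(\epsilon_k)$ coming from the free-boundary equation and the characterization of $v_\pm$ at boundary points, together with the cancellation $\partial_e\bar P=0$ used to kill the first-order tangential correction when the gradient expansions are fed back into the Bernoulli conditions in Propositions \ref{lim1} and \ref{lim2}.
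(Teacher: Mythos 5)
Your overall strategy is the same as the paper's: perturb the two-plane profile by $\epsilon_k P$, slide the resulting comparison function to first contact, use strict sub/superharmonicity together with $r_k=O(\epsilon_k^2)$ (so that the drift term of $\mathcal{L}_k$ is $o(\epsilon_k)$ and cannot destroy the sign of $\mathcal{L}_kQ_k$) to push the contact point onto the free boundary via the maximum principle, and localize near $x_0$ by the Hausdorff convergence of the linearizing graphs. The only real difference is cosmetic: the paper defines $Q_k$ implicitly through the diffeomorphism $\bm{T}_\epsilon(x)=x-\epsilon P\bm{e}$, while you write $Q_{k,t}$ explicitly; the two constructions agree to $O(\epsilon_k^2)$ and your explicit form makes the gradient computation immediate. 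Parts (1) and (2) are therefore fine modulo bookkeeping (note that your claim $(x_k-x_0)\cdot\bm{e}=O(\epsilon_k)$ only controls the normal displacement, so strictly you get an $o(\epsilon_k)$ rather than $O(\epsilon_k^2)$ remainder in $\nabla P_\pm(x_k)=\nabla P_\pm(x_0)+\dots$; this is exactly the precision the paper itself achieves and is all that Propositions \ref{lim1} and \ref{lim2} need).

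The concrete defect is the displayed comparison function in part (3). As written,
\begin{equation*}
Q_{k,t}=y_2^0\alpha_k\bigl[(1+\epsilon_k p)(x\cdot\bm{e})+\epsilon_k(\bar P-t)\bigr]^+-y_2^0\beta_k\bigl[-(1+\epsilon_k q)(x\cdot\bm{e})-\epsilon_k(\bar P-t)\bigr]^-,
\end{equation*}
the second bracket's negative part is supported on the set where $(1+\epsilon_kq)(x\cdot\bm{e})+\epsilon_k(\bar P-t)>0$, i.e.\ on the \emph{same} side as the positive phase, and even after fixing that sign the two zero sets $\{(1+\epsilon_kp)(x\cdot\bm{e})=-\epsilon_k(\bar P-t)\}$ and $\{(1+\epsilon_kq)(x\cdot\bm{e})=-\epsilon_k(\bar P-t)\}$ do \emph{not} coincide when $p\neq q$, contrary to your claim. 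Since the whole point of part (3) is to extract both gradient expansions at a single two-phase contact point, this must be repaired: use one level function $\ell_k(x)=(x\cdot\bm{e})+\tfrac{\epsilon_k}{1+\epsilon_kp}(\bar P(x)-t)$ and set $Q_{k,t}=y_2^0\alpha_k(1+\epsilon_kp)\,\ell_k^+-y_2^0\beta_k(1+\epsilon_kq)\,\ell_k^-$, which is the explicit analogue of the paper's verification that $\bm{T}^+(B_{1/2}^+)\cap\bm{T}^-(B_{1/2}^-)=\varnothing$. With that correction the resulting $\nabla Q_k^\pm$ carry an extra tangential term $y_2^0\alpha_k\epsilon_k\sigma\nabla\bar P$ (resp.\ with $\beta_k$), which, as you correctly observe, is annihilated in $|\nabla Q_k^\pm|^2$ by $\partial_e\bar P=0$; the stated expansions should be read modulo this tangential contribution, exactly as in the paper. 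A similar sign slip occurs in your part (2) ansatz ($+\epsilon_k(P_--t)$ should enter with the sign that makes $Q_{k,t}$ increase toward $-u_k^-$ as $P_-$ increases), but this is routine.
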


\begin{proof}
	We divide the proof into 3 steps.
	
	\emph{Step 1. Construction of a function $Q$ with the desired gradient.}
	
	Define $\bm{T}_\epsilon: B_{1/2}\cap\{x\cdot\bm{e}>0\}\rightarrow\mathbb{R}^2$ to be a function
	$$\bm{T}_\epsilon(x)=\bm{T}_\epsilon(x_1,x_2)=x-\epsilon P\bm{e}=(x_1-e_1\epsilon P,x_2-e_2\epsilon P)$$
	for $x=(x_1,x_2)\in B_{1/2}\cap\{x\cdot\bm{e}>0\}$ and $\bm{e}=(e_1,e_2)$. Here we only prove for $e_2>0$. For notational simplicity take $B_{1/2}^+:=B_{1/2}\cap\{x\cdot\bm{e}>0\}$ in this proof.
	
	Note $y_\epsilon=\bm{T}_\epsilon(x)$ and we have
	\begin{equation*}
	\frac{\partial y_\epsilon}{\partial x}=
	\begin{pmatrix}
	1-e_1\epsilon\partial_1P & -e_2\epsilon\partial_1P \\
	-e_1\epsilon\partial_2P & 1-e_2\epsilon\partial_2P \\
	\end{pmatrix},
	\quad	
	\frac{\partial x}{\partial y_\epsilon}=\frac{1}{1-\epsilon\nabla P\cdot\bm{e}}
	\begin{pmatrix}
	1-e_2\epsilon\partial_2P & e_2\epsilon\partial_1P \\
	e_1\epsilon\partial_2P & 1-e_1\epsilon\partial_1P \\
	\end{pmatrix}.
	\end{equation*}
	Thus for $\epsilon\ll\Vert P \Vert_{C^1}^{-1}$, $\bm{T}_\epsilon$ induces a bijection between $B_{1/2}^+$ and $U_\epsilon:=\bm{T}_\epsilon(B_{1/2}^+)\subset B_1$.
	
	Take $\bm{Q}_\epsilon=\bm{T}_\epsilon^{-1}$ and $Q_\epsilon=\alpha(\bm{Q}_\epsilon\cdot\bm{e})$ for $\alpha\in\mathbb{R}$,
	$$Q_\epsilon(x-\epsilon P\bm{e})=\alpha(x\cdot\bm{e}):U_\epsilon\rightarrow(0,1/2).$$
	Extend $Q_\epsilon$ to zero in $B_{1/2}\backslash \{Q_\epsilon>0\} $. After elementary calculations,
	\begin{equation*}
	\begin{aligned}
	\nabla_y Q_\epsilon(y_\epsilon) &=\nabla_x Q_\epsilon(\bm{T}_\epsilon(x))\frac{\partial x}{\partial y_\epsilon} \\
	&=\frac{\alpha}{1-\epsilon\nabla P\cdot\bm{e}}
	\begin{pmatrix}
	1-e_2\epsilon\partial_2P & e_2\epsilon\partial_1P \\
	e_1\epsilon\partial_2P & 1-e_1\epsilon\partial_1P \\
	\end{pmatrix}
	\begin{pmatrix}
	e_1 \\
	e_2 \\
	\end{pmatrix} \\
	&=\alpha
	\begin{pmatrix}
	e_1+\frac{\epsilon\partial_1P}{1-\epsilon\nabla P\cdot\bm{e}} \\
	\\
	e_2+\frac{\epsilon\partial_2P}{1-\epsilon\nabla P\cdot\bm{e}} \\
	\end{pmatrix} \\
	&=\alpha\cdot\bm{e}+\alpha\frac{\epsilon\nabla P}{1-\epsilon\nabla P\cdot\bm{e}} \\
	&=\alpha\cdot\bm{e}+\alpha\epsilon\nabla P+O(\epsilon^2),
	\end{aligned}
	\end{equation*}
	and
	\begin{equation*}
	\Delta Q_\epsilon=\alpha\epsilon\Delta P+O(\epsilon^2).
	\end{equation*}
	
	\emph{Step 2. Construction of touching points.}
	
	We only consider case (1) in the lemma, and case (2) can be obtained by a similar argument.
	
	Since $P_+$ touches $v_+$ strictly from below, $v_+-P_++\delta$ has a strictly positive minimum at $x_0$ for any positive number $\delta\rightarrow0$. Let $Q_k^\delta$ be the function introduced in step 1 with $\epsilon=\epsilon_k, \alpha=y_2^0\alpha_k$ and let $P=P_+-\delta$.
	
	Define
	$$P_k^\delta(x)=\frac{Q_k^\delta(x)-y_2^0\alpha_k(x\cdot\bm{e})^+}{y_2^0\alpha_k\epsilon_k}$$
	and
	$$\tilde{\Gamma}_k=\bigl\{ (x,P_k^\delta(x)), \ \ x\in\overline{ \{Q_k^\delta>0\}\cap B_{1/2} } \bigr\}.$$
	Using
	\begin{equation*}
	\begin{aligned}
	Q_k^\delta(x) &= Q_k^\delta(x-\epsilon_kP\bm{e}+\epsilon_kP\bm{e}) \\
	&=Q_k^\delta(x-\epsilon_kP\bm{e})+\nabla Q_k^\delta(x-\epsilon_kP\bm{e})\cdot\epsilon_kP\bm{e}+O(\epsilon_k^2) \\
	&=y_2^0\alpha_k(x\cdot\bm{e})+y_2^0\alpha_k\epsilon_kP+O(\epsilon_k^2),
	\end{aligned}
	\end{equation*}
	we can easily check the Hausdorff convergence
	$$\tilde{\Gamma}_k \rightarrow \tilde{\Gamma}:=\bigl\{ (x,P_+(x)-\delta), \ \ x\in \overline{B_{1/2}^+} \bigr\}.$$
	
	Now we claim: $\{Q_k^\delta>0\}\cap B_{1/2}\Subset\{u_k>0\}\cap B_{1/2}$, so that we can translate $Q_k$ to touch $u_k$ at some $x_0\in\partial\Omega_{u_k}^+$.
	Indeed, otherwise we would find a sequence $\{x_k\}\rightarrow\bar{x}\in\{u_k^+=0\}$ such that $Q_k^\delta(x_k)\geq0$ while $u_k^+(x_k)=0$. This together with
	$$\Gamma_k^+=\bigl\{ (x,v_{+,k}(x)), \ \ x\in\overline{\Omega_{u_k}^+\cap B_{1/2}} \bigr\} \quad \longrightarrow \quad \Gamma_+=\bigl\{ (x,v_+(x)), \ \ x\in \overline{B_{1/2}^+} \bigr\}$$
	implies that $P_k^\delta(x_k)\geq v_{+,k}(x_k)$ and $P^+(\bar{x})-\delta\geq v_+(x)$, in contradiction with the fact that $P^+-\delta<v$.
	
	Consequently there exists $\sigma=O(\delta)$ such that $Q_k^\delta(\cdot-\sigma\bm{e})$ touches $u_k^+$ from below at some $x_k^\delta$. Recall that $P$ is strictly subharmonic, $\Delta Q_k^\delta>0$ in $\{Q_k^\delta>0\}$ and thus
	$$\mathcal{L}_kQ_k^\delta=\Delta Q_k^\delta-\frac{r_k}{y_2^0+r_kx_2}\partial_2Q_k^\delta>0$$
	for $\partial_2Q_k^\delta=y_2^0 e_2 \alpha_k+y_2^0\alpha_k\epsilon_k\partial_2P+O(\epsilon^2)$. Hence
	\begin{equation*}
	\begin{cases}
	L_k(Q_k^\delta-u_k)>0 \quad \text{in} \quad \{Q_k^\delta>0\}\cap B_{1/2}, \\
	Q_k^\delta-u_k \leq 0 \qquad\ \ \text{on} \quad \partial\{Q_k^\delta>0\}\cap B_{1/2}.
	\end{cases}
	\end{equation*}
	By the maximum principle, the touching point $x_k^\delta$ lies on $\partial\{Q_k^\delta>0\}$ and thus, on $\partial\Omega_{u_k}^+$. Note that a proper translation ensures the touching point to be on $\partial\{Q_k^\delta>0\}$, not on $\partial B_{1/2}$.
	
	It remains to check the gradient condition for $Q_k^\delta$. In fact,
	\begin{equation*}
	\begin{aligned}
	P_+(\bm{Q}_k^\delta(x_k^\delta))&=P_+(x_k^\delta+\epsilon_kP\bm{e}) \\
	&=P_+(x_0)+\nabla P_+(x_0)\cdot(x_k^\delta+\epsilon_kP\bm{e}-x_0)+R_1
	\end{aligned}
	\end{equation*}
	with $R_1$ a Lagrange remainder in Taylor's expansion and
	$$\nabla_{x_k^\delta}P_+(\bm{Q}_k^\delta(x_k^\delta))=\nabla P_+(x_0)+O(\epsilon_k)$$
	with $|x_k^\delta-x_0|<\epsilon_k$. It is straightforward to deduce that
	\begin{equation*}
	\begin{aligned}
	\nabla Q_k^\delta(x_k^\delta) &= y_2^0\alpha_k\bm{e}+y_2^0\alpha_k\epsilon_k\nabla P_+(\bm{Q}_k^\delta(x_k^\delta))+O(\epsilon^2) \\
	&=y_2^0\alpha_k\bm{e}+y_2^0\alpha_k\epsilon_k\nabla P_+(x_0)+O(\epsilon_k^2).
	\end{aligned}
	\end{equation*}
	
	Furthermore, thanks to the convergence $x_k^\delta\rightarrow x_0\in B_{1/2}\cap\{x\cdot\bm{e}=0\}$ up to a subsequence as $k\rightarrow\infty$, we clearly obtain the desired conclusion.
	
	\emph{Step 3. Proof for item (3).}
	
	Denote
	\begin{equation*}
	P=
	\begin{cases}
	P_+=p(x\cdot\bm{e})+\bar{P} \quad\ \ \text{in} \quad B_{1/2}^+, \\
	P_-=-q(x\cdot\bm{e})+\bar{P} \quad \text{in} \quad B_{1/2}^-.
	\end{cases}
	\end{equation*}
	Let $\bm{T}^{\pm}$ be the corresponding transformations as in step 1. The key point is to get that $\bm{T}^+(B_{1/2}^+)\cap\bm{T}^-(B_{1/2}^-)=\varnothing$. In fact, assume there are $x\in B_{1/2}^+$ and $y\in B_{1/2}^-$ such that $\bm{T}^+(x)=\bm{T}^-(y)$, then
	$$x-\epsilon P_+\bm{e}=y-\epsilon P_-\bm{e}.$$
	For $\bm{e}^{\perp}$ normal to $\bm{e}$,
	$$x\cdot\bm{e}^{\perp}=y\cdot\bm{e}^{\perp}.$$
	This in addition with $\partial_e\bar{P}=0$ leads to
	$$(x-\epsilon P_+\bm{e})\cdot\bm{e}=(y-\epsilon P_-\bm{e})\cdot\bm{e},$$
	which means
	$$(1-\epsilon p)(x\cdot\bm{e})=(1+\epsilon q)(y\cdot\bm{e}).$$
	For $\epsilon$ small enough, either $x\cdot\bm{e}$ has the same sign with $y\cdot\bm{e}$, or they both vanish. This gives a contradiction.
	
	Hence $Q=Q^++Q^-$ is a well-defined comparison function. Arguing as in step 2 we arrive at the desired result.
	
	In particular, if $p>0$ and $Q_k$ touches $u_k$ from below and $x_k\in\partial\Omega_{u_k}^-\backslash\partial\Omega_{u_k}^+$, then $Q_k^+\equiv0$ in a neighborhood of $x_k$. Then there exists a point $z_k$ in this neighborhood such that $z_k\cdot\bm{e}\geq\delta_0$ for a positive constant $\delta_0$, and $z_k\rightarrow z$ up to a subsequence for $z\cdot\bm{e}>0$. Hence we have
	$$\lim_{k\rightarrow\infty}P_+(z_k)\lim_{k\rightarrow\infty}\frac{Q_k^+(z_k)-y_2^0\alpha_k(z_k\cdot\bm{e})^+}{y_2^0\alpha_k\epsilon_k}\leq0,$$
	a contradiction with $P_+(z)>0$.
	
\end{proof}

\section{Regularity theorems for the limiting problem}

We give some regularity results here for the limiting problems in Section 3, which are useful in Proposition \ref{flat1} and Proposition \ref{flat2} to get the improvement of flatness. The proofs can be found respectively in \cite{SFS14} and \cite{PSV21}.

\begin{prop}
	(Regularity for the two-membrane problem in 2-dimension)
	Suppose that $v\in C^0(B_{1/2})$ is a viscosity solution of (\ref{tm}) with $\Vert v\Vert_{L^\infty(B_{1/2})}\leq1$. Then there exist $C=C(\lambda_{\pm}, l)>0$ and $t, p, q\in\mathbb{R}$ satisfying $\lambda_+^2p=\lambda_-^2q\geq-l$ such that
	$$\sup_{B_r}\frac{|v(x)-v(0)-(t(x\cdot\bm{e}^\perp)+p(x\cdot\bm{e})^+-q(x\cdot\bm{e})^-)|}{r^{3/2}}\leq C.$$
\end{prop}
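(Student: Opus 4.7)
Rotating coordinates so that $\bm{e} = e_2$ and writing $\Gamma := \{x_2 = 0\} \cap B_{1/2}$, the natural tangent family at an interface point is
$$L_{c,t,p,q}(x) := c + t x_1 + p x_2^+ - q x_2^-, \qquad \lambda_+^2 p = \lambda_-^2 q \geq -l.$$
A direct check shows each such $L$ is a viscosity solution of (\ref{tm}): the equality $\lambda_+^2 p = \lambda_-^2 q$ is the transmission condition on $\mathcal{C}$, and saturation $\lambda_+^2 p = -l$ is compatible with the Neumann condition on $\mathcal{J}$. The case $0 \notin \Gamma$ reduces to interior Schauder estimates for harmonic functions, so I focus on $0 \in \Gamma$ and aim to approximate $v$ to order $|x|^{3/2}$ by such a profile via the improvement-of-flatness iteration in the spirit of \cite{SFS14, PSV21}.

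\textbf{Excess decay and iteration.} The main step is a one-scale decay lemma: there exist $\rho_0 \in (0, 1/4)$ and $C_0 = C_0(\lambda_\pm, l)$ such that every viscosity solution $v$ of (\ref{tm}) with $\|v\|_{L^\infty(B_1)} \leq \epsilon$ admits admissible $(t, p, q)$ with
$$\sup_{B_{\rho_0}} |v(x) - v(0) - t x_1 - p x_2^+ + q x_2^-| \leq \rho_0^{3/2} \epsilon, \qquad |t| + |p| + |q| \leq C_0 \epsilon.$$
Subtracting the chosen profile, the remainder solves (\ref{tm}) with $l$ replaced by $l^{(1)} := l + \lambda_+^2 p \geq 0$; rescaling by $\rho_0$ and reapplying the lemma (uniformly for $l'$ in a compact interval, ensured by the geometric decay of the successive $|p^{(k)}|$) produces a Cauchy sequence $(t^{(k)}, p^{(k)}, q^{(k)}) \to (t^*, p^*, q^*)$ satisfying $\lambda_+^2 p^* = \lambda_-^2 q^* \geq -l$, with $\sup_{B_{\rho_0^k}} |v - L^*| \leq C \rho_0^{3k/2}$; this is precisely the pointwise $C^{3/2}$ bound in the statement.

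\textbf{Proof of the decay lemma.} I argue by compactness and contradiction: if false, a sequence $v_k$ of normalized solutions violates the bound. Harmonicity of $v_{k,\pm}$ on the half-disks and uniform $L^\infty$ bounds yield interior $C^\infty$ estimates away from $\Gamma$; the one-sided boundary derivative inequality combined with a barrier construction gives equicontinuity up to $\Gamma$. A subsequence converges uniformly to a bounded solution $v_\infty$ of (\ref{tm}), the inequality and transmission conditions passing to the limit via a touching-function perturbation analogous to Lemma \ref{touch}. The contradiction then follows once $v_\infty$ is shown to admit a $C^{3/2}$ Taylor expansion at $0$ of the claimed form, producing admissible approximants for $v_k$ with error $o(\rho_k^{3/2})$.

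\textbf{Main obstacle.} The delicate step is the classification of the leading behaviour of $v_\infty$ at $0$. Separating according to whether $0$ is in the interior of $\mathcal{C}$, in the interior of $\mathcal{J}$, or at the relative boundary between them, the first two cases reduce (after the linear shift $v_\pm \mapsto v_\pm + (l/\lambda_\pm^2) x_2$ that absorbs $l$) via even reflection to classical transmission or Neumann harmonic problems, yielding $C^\infty$ regularity locally. In the branch case one expands $v_\infty(r,\theta) = \sum_k r^k f_k(\theta)$ in polar coordinates centered at $0$ and imposes the mixed Neumann-transmission boundary conditions: a short computation reveals that the admissible exponents are the non-negative integers $k = 0, 1, 2, \ldots$ together with the positive half-integers $k = 1/2, 3/2, 5/2, \ldots$; however, the combined constraints $v_+ \leq v_-$ on $\mathcal{J}$ and $\lambda_+^2 \partial_e v_+ + l \geq 0$ on $\mathcal{C}$ force the $r^{1/2}$ Fourier coefficient to vanish (the two inequalities reduce to oppositely signed bounds on the same scalar coefficient), leaving $r^{3/2}$ as the first admissible non-integer mode. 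Combined with the integer modes (a constant plus a linear profile $t x_1 + p x_2^+ - q x_2^-$ with $\lambda_+^2 p = \lambda_-^2 q \geq -l$), this yields the decomposition demanded by the statement.
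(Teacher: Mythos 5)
First, a point of reference: the paper does not prove this proposition at all — Appendix F states that the proofs of the two regularity statements "can be found respectively in \cite{SFS14} and \cite{PSV21}" — so your attempt has to be measured against the argument in \cite{PSV21}, where the two-membrane problem is reduced to the Signorini (thin obstacle) problem. Your target is right (tangent family $c+t(x\cdot\bm{e}^\perp)+p(x\cdot\bm{e})^+-q(x\cdot\bm{e})^-$ with $\lambda_+^2p=\lambda_-^2q\geq -l$, smoothness of the interior-of-$\mathcal{C}$ and interior-of-$\mathcal{J}$ cases by reflection, the exponent $3/2$ coming from the first admissible half-integer mode), but two steps are genuinely gapped. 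The first is structural: your "decay lemma by compactness" is circular, because the problem (\ref{tm}) is already linear. The limit $v_\infty$ of a contradicting sequence solves the very same two-membrane problem, and you yourself note that the contradiction "follows once $v_\infty$ is shown to admit a $C^{3/2}$ Taylor expansion at $0$" — which is exactly the statement being proved. For a nonlinear problem compactness buys a simpler linearized limit; here it buys nothing, and the whole proof collapses onto your "Main obstacle" paragraph.

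That paragraph contains the second, decisive gap. The expansion $v_\infty(r,\theta)=\sum_k r^k f_k(\theta)$ under mixed Neumann--transmission conditions is only meaningful if $\mathcal{J}$ and $\mathcal{C}$ are unions of rays from the origin near $0$ (in $2$D: one side of $0$ on the interface entirely in $\mathcal{C}$, the other in $\mathcal{J}$). This local structure of the coincidence set is not known a priori — it is part of what must be proved — and the claim that the two sign constraints "force the $r^{1/2}$ Fourier coefficient to vanish" is precisely the optimal-regularity statement for the thin obstacle problem, not a short computation; it cannot be verified coefficientwise before one knows the solution has a well-defined leading homogeneity (via Almgren-type monotonicity or an a priori $C^{1,\alpha}$ bound). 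The route that actually closes this step, and the one used in \cite{PSV21}, is: reflect $v_-$ evenly to $\tilde v_-$ on $\{x\cdot\bm{e}>0\}$; then $\lambda_+^2v_++\lambda_-^2\tilde v_-$ is harmonic with vanishing Neumann data on all of $\{x\cdot\bm{e}=0\}$ (hence smooth after even extension), while $w=\tilde v_--v_+$ satisfies $w\geq0$ on the thin space, $\partial_{\bm{e}}w\leq l(\lambda_+^{-2}+\lambda_-^{-2})$ with equality on $\{w>0\}$, i.e.\ a Signorini problem after subtracting a linear function; the known $C^{1,1/2}$ regularity of Signorini solutions then yields the stated expansion for $v_\pm=(\text{smooth}\mp\lambda_\mp^2 w)/(\lambda_+^2+\lambda_-^2)$. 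If you replace your compactness scheme and Fourier expansion by this reduction plus the citation of Signorini optimal regularity, the proof is complete; as written, the key step is asserted rather than proved.
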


\begin{prop}
	(Regularity for the transmission problem in 2-dimension)
	Suppose that $v\in C^0(B_{1/2})$ is a viscosity solution of (\ref{t}) with $\Vert v\Vert_{L^\infty(B_{1/2})}\leq1$. Then there exist $C=C(\alpha_{\infty}, \beta_{\infty})>0$ and $t, p, q\in\mathbb{R}$ satisfying $\alpha_{\infty}^2p=\beta_{\infty}^2q\geq-l$ such that
	$$\sup_{B_r}\frac{|v(x)-v(0)-(t(x\cdot\bm{e}^\perp)+p(x\cdot\bm{e})^+-q(x\cdot\bm{e})^-)|}{r^{2}}\leq C.$$
\end{prop}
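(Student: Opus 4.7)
The plan is to recast (\ref{t}) as a divergence-form equation with piecewise constant coefficients and exploit classical regularity up to the flat interface. After a rotation we may take $\bm{e}=e_2$, so that the interface becomes $\{x_2=0\}\cap B_{1/2}$; set $B_r^{\pm}:=B_r\cap\{\pm x_2>0\}$. The viscosity system (\ref{t}) is then equivalent to the weak equation
\begin{equation*}
\nabla\cdot(\sigma\nabla v)=0\quad\text{in }B_{1/2},\qquad\sigma(x)=\alpha_\infty^2\,I_{\{x_2>0\}}+\beta_\infty^2\,I_{\{x_2<0\}},
\end{equation*}
since the harmonicity of $v_\pm$ off the interface furnishes the equation in the classical sense, while the identity $\alpha_\infty^2\partial_2 v_+=\beta_\infty^2\partial_2 v_-$ is precisely the natural conormal jump condition. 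As the interface is prescribed (not free) and the problem is linear, uniqueness of the boundary value problem identifies the viscosity and weak formulations.

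The heart of the argument is the $C^{2}$ regularity of $v_\pm$ up to the interface. By the classical transmission-regularity theorem for piecewise smooth coefficients across a smooth interface, $v_\pm$ extend to $C^\infty$ functions on $\overline{B_{1/4}^{\pm}}$ with $\|v_\pm\|_{C^2(\overline{B_{1/4}^{\pm}})}\le C(\alpha_\infty,\beta_\infty)\|v\|_{L^\infty(B_{1/2})}$. In two dimensions this can be verified directly by separation of variables in polar coordinates: writing
\begin{equation*}
v_\pm(r,\theta)=a_0+\sum_{n\ge1}r^n\bigl(a_n^\pm\cos n\theta+b_n^\pm\sin n\theta\bigr),
\end{equation*}
with $\theta\in[0,\pi]$ for $v_+$ and $\theta\in[-\pi,0]$ for $v_-$, continuity across the two interface arcs $\theta=0,\pi$ forces $a_n^+=a_n^-$ for every $n\ge 0$, while the conormal identity (using $\partial_{x_2}=\frac{1}{r}\partial_\theta$ at $\theta=0$ and $\partial_{x_2}=-\frac{1}{r}\partial_\theta$ at $\theta=\pi$) forces $\alpha_\infty^2 b_n^+=\beta_\infty^2 b_n^-$ for $n\ge 1$. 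A Cauchy-type estimate on concentric circles then yields $|a_n^\pm|+|b_n^\pm|\le C(\alpha_\infty,\beta_\infty)\,2^n$ from $\|v\|_{L^\infty(B_{1/2})}\le 1$.

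Finally I would isolate the $n=0,1$ harmonics and control the tail. Setting $t:=a_1$, $p:=b_1^+$, and $q:=b_1^-$, the relation $\alpha_\infty^2 p=\beta_\infty^2 q$ is immediate, and the first two harmonics combine to $v(0)+t(x\cdot\bm{e}^\perp)+p(x\cdot\bm{e})^+-q(x\cdot\bm{e})^-$, using $(x\cdot\bm{e})^+=r\sin\theta$ for $\theta\in[0,\pi]$ and $-(x\cdot\bm{e})^-=r\sin\theta$ for $\theta\in[-\pi,0]$. The remainder $\sum_{n\ge 2}r^n(\cdots)$ is dominated on $B_r$ with $r\le 1/4$ by a convergent geometric series, contributing at most $Cr^2$. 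The main obstacle I expect is obtaining the integer spectral gap between the first and second harmonics that yields the exponent exactly $2$ (as opposed to merely $1+\alpha$); this crucially uses the two-dimensional structure, which allows the explicit polar decomposition, and is where the argument genuinely differs from a purely Schauder-type boundary estimate.
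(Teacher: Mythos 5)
The paper does not actually prove this proposition; it delegates it to \cite{SFS14}, and your argument is essentially the standard proof given there (reduce to the divergence-form equation $\nabla\cdot(\sigma\nabla v)=0$ with piecewise-constant $\sigma$ across the flat interface, obtain smoothness of $v_\pm$ up to the interface, and read off the first-order part of the expansion). Your computation of the matching conditions $a_n^+=a_n^-$, $\alpha_\infty^2 b_n^+=\beta_\infty^2 b_n^-$, the identification of the $n=1$ harmonics with $t(x\cdot\bm{e}^\perp)+p(x\cdot\bm{e})^+-q(x\cdot\bm{e})^-$, the Cauchy bound $|a_n|+|b_n^\pm|\le C2^n$, and the geometric-series tail estimate on $B_r$, $r\le 1/4$, are all correct and do deliver the exponent $2$ (the constraint $\ge -l$ is vacuous here since $l=\infty$ in the transmission case). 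Two steps, however, are asserted rather than proved. First, the identification of the viscosity solution with the weak solution requires a comparison principle for viscosity solutions of (\ref{t}); the usual route is to solve the weak Dirichlet problem with boundary datum $v$, show that solution is classical up to the interface, and then compare. Second, writing the polar series for $v_\pm$ presupposes that each of them extends harmonically past the interface; this is exactly what must be established, and the clean way to do it (and to get the quantitative bound $C(\alpha_\infty,\beta_\infty)$) is the double reflection: with $\tilde v_-(x_1,x_2):=v_-(x_1,-x_2)$, the function $f=v_+-\tilde v_-$ vanishes on $\{x_2=0\}$ and extends harmonically to $B_{1/2}$ by odd reflection, while $g=\alpha_\infty^2 v_+ +\beta_\infty^2\tilde v_-$ has vanishing (weak) normal derivative there and extends by even reflection; then
\begin{equation*}
v_+=\frac{g+\beta_\infty^2 f}{\alpha_\infty^2+\beta_\infty^2},\qquad \tilde v_-=\frac{g-\alpha_\infty^2 f}{\alpha_\infty^2+\beta_\infty^2}
\end{equation*}
are restrictions of functions harmonic in all of $B_{1/2}$, which legitimizes your expansion and the Cauchy estimates. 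With these two points filled in, your proof is complete and is the same in substance as the one in the cited reference.
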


\vskip .4in

\end{document}